\pdfoutput=1
\documentclass[11pt]{amsart}

\usepackage[T1]{fontenc}
\usepackage[utf8]{inputenc}
\usepackage{lmodern}
\usepackage{amsmath,amssymb,amsthm,mathtools}
\usepackage{mathrsfs}
\usepackage{booktabs}
\usepackage{array}
\usepackage{enumitem}
\usepackage{microtype}
\usepackage[colorlinks=true,linkcolor=blue,citecolor=blue,urlcolor=blue]{hyperref}

\theoremstyle{plain}
\newtheorem{theorem}{Theorem}[section]
\newtheorem{proposition}[theorem]{Proposition}
\newtheorem{lemma}[theorem]{Lemma}
\newtheorem{corollary}[theorem]{Corollary}
\theoremstyle{definition}
\newtheorem{example}[theorem]{Example}
\theoremstyle{remark}
\newtheorem{remark}[theorem]{Remark}

\numberwithin{equation}{section}

\newcommand{\HH}{\mathbb H}
\newcommand{\ZZ}{\mathbb Z}
\newcommand{\QQ}{\mathbb Q}
\newcommand{\Qbar}{\overline{\mathbb Q}}
\newcommand{\CC}{\mathbb C}
\newcommand{\calH}{\mathcal H}
\newcommand{\calP}{\mathcal P}

\newcommand{\calE}{\mathcal E}
\newcommand{\ord}{\operatorname{ord}}
\newcommand{\supp}{\operatorname{supp}}
\newcommand{\divisor}{\operatorname{div}}

\newcommand{\Const}{\operatorname{Const}}

\title[Boolean-Walsh eta-units]{Boolean-Walsh Eta-Units and Eisenstein Bases for Squarefree Levels}

\author{K. Srinivasa Raghava}
\address{Pie Mathematics Association}
\email{srinivasaraghavak@gmail.com}

\subjclass[2020]{Primary 11F20, 11F27; Secondary 11F11, 11F30, 11F37, 11P82, 05A17}
\keywords{Dedekind eta function, eta-quotients, modular units, Fricke involution, Atkin--Lehner involutions, Walsh characters, Eisenstein series, cusp divisors, partition products}

\begin{document}

\begin{abstract}
Let $M>1$ be squarefree and let $D(M)$ be its Boolean divisor cube.  To each Boolean character $\chi_T$ we attach the eta-quotient
\[
 R_T^{(M)}(\tau)=\prod_{d\mid M}\eta(d\tau)^{\chi_T(d)},
 \qquad
 \chi_T(d)=(-1)^{|T\cap\supp(d)|}.
\]
At squarefree level, the finite Fourier transform on the divisor cube simultaneously diagonalizes the squarefree Ligozat cusp-order matrix, Fricke complementation, Atkin--Lehner action on cusp labels, and the constant-term map for logarithmic Eisenstein series.  In particular, for $T\ne\varnothing$,
\[
 \ord_{1/c}R_T^{(M)}
 =\frac{\Lambda_T^{(M)}}{24}\chi_T(c),
 \qquad
 \Lambda_T^{(M)}=
 \prod_{p\in T}(p-1)
 \prod_{\substack{p\mid M\\ p\notin T}}(p+1),
\]
and the forms $D\log R_T^{(M)}$ form a Walsh basis of the Eisenstein subspace of $M_2(\Gamma_0(M))$.  The structural theorem also determines explicit Fricke constants, Atkin--Lehner eigenvalues, good-prime Hecke eigenvalues, local $U_p$ triangular blocks, a simultaneous bad-prime eigenbasis, and the indices of two explicit principal cuspidal divisor sublattices inside the formal degree-zero cusp-divisor lattice.

As an application we specialize to the Heegner prime product
\[
 N=2\cdot3\cdot7\cdot11\cdot19\cdot43\cdot67\cdot163.
\]
The first Boolean boundary gives the eta-normalized Heegner-coloured partition product, while the top Walsh character gives the Möbius eta-unit identity
\[
 D\log R_{\mathcal P}^{(N)}(\tau)=40415760-
 \sum_{n\ge1}\sigma_1(n^\perp)q^n.
\]
The same application gives algebraic modular-unit relations for the reciprocal partition product and exact Fricke-fixed logarithmic derivative identities for $1/\pi$, interpreted through the modular completion of $E_2$ and through accelerated paired products.
\end{abstract}

\maketitle

\section{Introduction}

Let \(\HH\) denote the upper half-plane and put \(q=e^{2\pi i\tau}\).  We use
\[
 (a;q)_\infty=\prod_{n=0}^{\infty}(1-aq^n),
 \qquad
 \eta(\tau)=q^{1/24}(q;q)_\infty,
\]
and
\[
 D=q\frac{d}{dq}=\frac{1}{2\pi i}\frac{d}{d\tau}.
\]
Eta-products and eta-quotients are among the most tractable modular objects connecting partitions, theta functions, modular forms, and modular units.  The classical foundation includes Dedekind's eta transformation, Newman's construction of modular functions from eta-products, and Ligozat's cusp-order formula; see \cite{Newman,Ligozat,RademacherGrosswald}.  The arithmetic theory of modular units and cuspidal divisor groups is classical; see Kubert--Lang \cite{KubertLang}.  Eta-quotients were developed further in work of Dummit, Kisilevsky, McKay, Gordon, Hughes, Martin, Ono, and Rouse--Webb \cite{DummitKisilevskyMcKay,GordonHughes,Martin,MartinOno,RouseWebb}.  While Ligozat's formula gives cusp orders for arbitrary eta-quotients, and the general theory of modular units is well established, the contribution of this paper is to show that, at squarefree level, the finite Fourier transform on the divisor cube simultaneously diagonalizes the eta-unit cusp-order theory and the associated logarithmic Eisenstein constant-term theory.  Equivalently, we isolate the Walsh--Hadamard basis in which the cusp-order operator, Fricke complementation, Atkin--Lehner action on cusps, and the Eisenstein constant-term map are all diagonal.  The squarefree hypothesis is essential here: it turns the divisor set into a Boolean cube and makes the Ligozat matrix a tensor product of local two-by-two matrices.  We use standard modular-form conventions as in \cite{Apostol,DiamondShurman,Koblitz,Ono,Serre,Zagier}.  The partition-theoretic applications use the classical circle of ideas of Hardy--Ramanujan, Rademacher, and Ingham \cite{HardyRamanujan,RademacherPartition,Ingham}.

The purpose of this paper is to show that squarefree eta-quotients admit a natural Boolean-Walsh organization.  Let
\[
 M=\prod_{p\in\calP_M}p
\]
be squarefree, where \(\calP_M\) is the set of prime divisors of \(M\).  The divisors of \(M\) form a Boolean cube.  For \(T\subseteq\calP_M\), define the Boolean character
\[
 \chi_T(d)=(-1)^{|T\cap\supp(d)|},\qquad d\mid M,
\]
where \(\supp(d)\) is the set of prime divisors of \(d\).  The central objects are
\begin{equation}\label{eq:intro-RT}
 R_T^{(M)}(\tau)=\prod_{d\mid M}\eta(d\tau)^{\chi_T(d)}.
\end{equation}
For \(T\ne\varnothing\), the total eta weight is zero.  The function \(R_T^{(M)}\) may have an eta multiplier, but its twenty-fourth power
\[
 \bigl(R_T^{(M)}(\tau)\bigr)^{24}=
 \prod_{d\mid M}\Delta(d\tau)^{\chi_T(d)},
 \qquad \Delta=\eta^{24},
\]
is a genuine modular unit on \(X_0(M)\).  This distinction will be kept throughout the paper.  When we write \(\ord_{1/c}R_T^{(M)}\), we mean the local eta-product cusp exponent; divisors on \(X_0(M)\) are stated only for genuine modular functions such as \((R_T^{(M)})^{24}\) or the multiplier-clearing powers of Proposition \ref{prop:smaller-exponent}.

The first main point is that the Walsh basis diagonalizes the Fricke complement.  The Fricke involution \(W_M\tau=-1/(M\tau)\) sends a divisor \(d\) to \(M/d\).  Consequently, for \(T\ne\varnothing\),
\begin{equation}\label{eq:intro-fricke}
 R_T^{(M)}\!\left(-\frac1{M\tau}\right)
 =\mathfrak c_T^{(M)}\,R_T^{(M)}(\tau)^{(-1)^{|T|}},
\end{equation}
where
\[
 \mathfrak c_T^{(M)}=
 \begin{cases}
 p^{2^{k-2}},& T=\{p\},\quad k=|\calP_M|,\\
 1,& |T|\ge 2.
 \end{cases}
\]
The second main point is cuspidal.  If \(c\mid M\) represents the cusp \(1/c\), and orders are measured in the width-normalized local parameter, then
\begin{equation}\label{eq:intro-cusp}
 \ord_{1/c}R_T^{(M)}
 =\frac{\Lambda_T^{(M)}}{24}\chi_T(c),
 \qquad
 \Lambda_T^{(M)}=
 \prod_{p\in T}(p-1)\prod_{\substack{p\mid M\\ p\notin T}}(p+1).
\end{equation}
Thus the cuspidal divisor of \((R_T^{(M)})^{24}\) is exactly the Walsh vector \((\chi_T(c))_{c\mid M}\), scaled by \(\Lambda_T^{(M)}\).  This gives a direct index formula for the formal degree-zero cusp-divisor sublattice generated by these explicit Walsh modular units.

The third main point is Eisenstein-theoretic.  The logarithmic derivatives
\begin{equation}\label{eq:intro-Eisenstein}
 \calE_T^{(M)}(\tau)=D\log R_T^{(M)}(\tau)
 =\frac1{24}\sum_{d\mid M}\chi_T(d)dE_2(d\tau)
\end{equation}
are holomorphic modular forms of weight two on \(\Gamma_0(M)\), and the \(2^k-1\) forms \(\calE_T^{(M)}\), \(T\ne\varnothing\), form a basis of the Eisenstein subspace of \(M_2(\Gamma_0(M))\).  Their Fourier expansions and Dirichlet series are completely explicit.  For primes \(\ell\nmid M\), they are eigenforms with Hecke eigenvalue \(1+\ell\).  At primes \(p\mid M\), the \(U_p\)-action has a simple triangular form in the Walsh basis.

The next theorem records the main structural result in a single place.  Its proof is distributed through Sections \ref{sec:layers}--\ref{sec:eisenstein}, where each part is proved in a form suitable for later use.

\begin{theorem}\label{thm:main-structural}
Let \(M>1\) be squarefree, let \(\calP_M\) be its set of prime divisors, and let \(T\subseteq\calP_M\) be nonempty.  Then the following assertions hold.
\begin{enumerate}[label=\textup{(\roman*)},leftmargin=*]
\item The eta-quotient \(R_T^{(M)}\) has weight zero, and \((R_T^{(M)})^{24}\) is a modular unit on \(X_0(M)\).  More generally, the sufficient exponent given in Proposition \ref{prop:smaller-exponent} removes the eta multiplier; no minimality is asserted there.
\item The Fricke involution satisfies
\[
 R_T^{(M)}\!\left(-\frac1{M\tau}\right)
 =\mathfrak c_T^{(M)}R_T^{(M)}(\tau)^{(-1)^{|T|}},
\]
with \(\mathfrak c_T^{(M)}\) given explicitly by \eqref{eq:fricke-constant}.
\item The cusp divisor of the modular unit \((R_T^{(M)})^{24}\) is
\[
 \divisor\bigl((R_T^{(M)})^{24}\bigr)
 =\Lambda_T^{(M)}\sum_{c\mid M}\chi_T(c)[1/c].
\]
\item For every exact divisor \(Q\mid M\), the Atkin--Lehner involution \(W_Q\) acts on cusp labels by symmetric difference with \(Q\).  Consequently it acts on the divisor above and on \(\calE_T^{(M)}=D\log R_T^{(M)}\) by the eigenvalue \(\chi_T(Q)\).
\item The forms \(\calE_T^{(M)}\), as \(T\) runs over the nonempty subsets of \(\calP_M\), form a basis of the Eisenstein subspace of \(M_2(\Gamma_0(M))\).  The constant-term map is diagonalized by the same Walsh basis.
\item If \(\ell\nmid M\), then \(T_\ell\calE_T^{(M)}=(1+\ell)\calE_T^{(M)}\).  If \(p\mid M\), the operator \(U_p\) acts by the triangular blocks in Corollary \ref{cor:Up-blocks}.  Moreover, the forms \(\mathfrak E_A^{(M)}\) of Theorem \ref{thm:simultaneous-Up-eigenbasis} form a simultaneous eigenbasis for all \(U_p\), \(p\mid M\), with eigenvalue \(1\) for \(p\in A\) and \(p\) for \(p\notin A\).
\item If $\nu=2^{\omega(M)}$, let $L_0$ be the formal degree-zero cusp-divisor lattice, let $L_\Delta$ be the principal cusp-divisor lattice generated by degree-zero $\Delta$-eta units, and let $L_{\mathrm W}$ be the Walsh principal cusp-divisor lattice generated by the divisors of the modular units $(R_T^{(M)})^{24}$.  Then
\begin{align*}
 [L_0:L_\Delta]&=\prod_{\varnothing\ne T\subseteq\calP_M}\Lambda_T^{(M)},\\
 [L_\Delta:L_{\mathrm W}]&=\nu^{(\nu-2)/2},\\
 [L_0:L_{\mathrm W}]&=\nu^{(\nu-2)/2}
 \prod_{\varnothing\ne T\subseteq\calP_M}\Lambda_T^{(M)}.
\end{align*}
These are indices inside the formal degree-zero cusp-divisor lattice; the theorem does not compute the cuspidal divisor class group of $X_0(M)$.
\end{enumerate}
\end{theorem}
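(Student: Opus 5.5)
The plan is to reduce everything to a tensor-product computation on the Boolean cube $D(M)\cong\{0,1\}^{\calP_M}$, handling one prime at a time. Part (i) follows from Newman/Ligozat-type criteria. The total weight is $\frac12\sum_{d\mid M}\chi_T(d)=0$ because $\chi_T$ is a nontrivial character of the cube. The conditions $\sum_d d\,\chi_T(d)\equiv 0$ and $\sum_d (M/d)\chi_T(d)\equiv0 \pmod{24}$ become trivial after raising to the $24$th power. The product $\prod_d d^{24\chi_T(d)}$ is a square. Hence $(R_T^{(M)})^{24}=\prod_d\Delta(d\tau)^{\chi_T(d)}$ is a weight-zero modular function on $\Gamma_0(M)$, holomorphic and nonvanishing on $\HH$. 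The sharper exponent is simply quoted from Proposition \ref{prop:smaller-exponent}.

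For (ii), apply $\eta(-1/\tau)=\sqrt{-i\tau}\,\eta(\tau)$ to each factor, using $d\cdot(-1/(M\tau))=-1/((M/d)\tau)$. This gives $\eta(d\,W_M\tau)=\sqrt{-i(M/d)\tau}\,\eta((M/d)\tau)$. Since $\sum\chi_T(d)=0$, the $\sqrt{-i\tau}$ factors cancel. We have $\chi_T(M/d)=\chi_T(M)\chi_T(d)=(-1)^{|T|}\chi_T(d)$, so the eta part is $R_T^{(M)}(\tau)^{(-1)^{|T|}}$. The constant is $\prod_d (M/d)^{\chi_T(d)/2}$. Factor it prime by prime: the exponent of $p$ is $\tfrac12\sum_{d:\,p\nmid d}\chi_T(d)$. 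This vanishes unless $T\subseteq\{p\}$, which recovers \eqref{eq:fricke-constant}.

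For (iii), use Ligozat's formula at the cusp $1/c$ with width $M/c$. At squarefree level it reads
\[
\ord_{1/c}\prod\eta(d\tau)^{r_d}=\frac{1}{24}\sum_{d\mid M}\frac{\gcd(c,d)^2 M}{\gcd(c,M/c)\,c\,d}\,r_d .
\]
The kernel $K(c,d)$ factors as $\prod_{p\mid M}K_p(c_p,d_p)$, where $K_p$ is a local $2\times2$ matrix, essentially $\begin{pmatrix}p&1\\1&p\end{pmatrix}$ up to the normalization. The character $\chi_T$ is also a tensor product of local vectors $(1,\pm1)$. Each local vector is an eigenvector of $K_p$, with eigenvalue $p+1$ for the vector $(1,1)$ and $p-1$ times the sign for the vector $(1,-1)$. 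Multiplying the local eigenvalues gives $\Lambda_T^{(M)}\chi_T(c)/24$. The factor $24$ cancels, and the divisor on $X_0(M)$ follows. I expect the main bookkeeping risk to be in this step: matching the paper's width normalization so that the local matrices are exactly symmetric.

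For (iv), the cusp $1/c$ is sent by $W_Q$ to $1/(c\ast Q)$, where $c\ast Q=cQ/\gcd(c,Q)^2$. This follows from the standard description of Atkin--Lehner action on cusps of squarefree level. Also $\chi_T(c\ast Q)=\chi_T(c)\chi_T(Q)$. Thus the divisor is an eigenvector with eigenvalue $\chi_T(Q)$. Since the unit is determined by its divisor up to a constant, $(R_T^{(M)})^{24}\mid W_Q$ equals a constant times $(R_T^{(M)})^{24\chi_T(Q)}$. Taking $D\log$ kills the constant and gives the eigenvalue on $\calE_T^{(M)}$.

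For (v), write $\calE_T^{(M)}=\frac1{24}\sum_d\chi_T(d)\,dE_2(d\tau)$. Since $\sum\chi_T(d)d=0$ for $T\neq\varnothing$, the nonholomorphic parts cancel, so $\calE_T^{(M)}\in M_2(\Gamma_0(M))$. The constant term at $1/c$ equals the cusp order times a width factor, by the residue of $d\log$ of a unit. Its vector is therefore $\Lambda_T\chi_T(c)$, up to a nonzero scalar. The $2^k-1$ Walsh vectors are independent and span the degree-zero vectors, and $\Lambda_T\neq0$. The Eisenstein space has dimension equal to the number of cusps minus one, namely $2^k-1$. Hence these forms form a basis, and the constant-term map is diagonal.

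For (vi), each $E_2(d\tau)$ combination with vanishing weight is an eigenform of $T_\ell$ for $\ell\nmid M$ with eigenvalue $\sigma_1(\ell)$. The $U_p$ blocks and the simultaneous eigenbasis are cited from Corollary \ref{cor:Up-blocks} and Theorem \ref{thm:simultaneous-Up-eigenbasis}.

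For (vii), $L_\Delta$ is the image of degree-zero exponent vectors $r$ under $K$. Ligozat's matrix $K$ has eigenvalues $\Lambda_T$ on the Walsh basis, and $\Lambda_\varnothing$ corresponds to the degree direction. The index $[L_0:L_\Delta]$ is the determinant of $K$ restricted to the sum-zero sublattice, relative to that sublattice. This product is $\prod_{T\neq\varnothing}\Lambda_T$; it needs a separate check that the lattice $\{r:\sum r=0\}$ maps onto degree zero compatibly. Then $L_{\mathrm W}=K(\Lambda_{\mathrm W})$, where $\Lambda_{\mathrm W}$ is spanned by the Walsh vectors $\chi_T$, $T\ne\varnothing$. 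Hence $[L_\Delta:L_{\mathrm W}]$ is the index of the Walsh lattice in the sum-zero lattice. The full Walsh--Hadamard matrix $H$ has $|\det H|=\nu^{\nu/2}$. Removing the all-ones column, whose index in $\ZZ^\nu$ over its complement contributes $\nu$, gives $\nu^{\nu/2}/\nu=\nu^{(\nu-2)/2}$. Multiplying the two indices gives the third formula.
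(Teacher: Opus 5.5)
Your outline follows the paper's route: the local $2\times2$ factorization of the Ligozat matrix with Walsh eigenvectors, Fricke via eta inversion, Atkin--Lehner toggling of cusp labels, a dimension count for the Eisenstein space, and the Hadamard determinant for the lattice indices. However, one step in (v) is false as written. You claim $\sum_{d\mid M}\chi_T(d)\,d=0$ for $T\ne\varnothing$. In fact
\[
\sum_{d\mid M}\chi_T(d)\,d=\prod_{p\mid M}\bigl(1+\chi_T(p)p\bigr)=(-1)^{|T|}\Lambda_T^{(M)}=24\kappa_T^{(M)}\neq0 .
\]
This quantity is the constant term of $24\calE_T^{(M)}$ at $\infty$, so its vanishing would contradict your own part (iii) at the cusp $1/M$. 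The anomaly of $d\,E_2(d\tau)$ is $d\cdot 3/(\pi d\operatorname{Im}\tau)=3/(\pi\operatorname{Im}\tau)$, which is independent of $d$. The cancellation therefore rests on $\sum_{d\mid M}\chi_T(d)=0$, which you already used in (i) and (ii). It gives $\calE_T^{(M)}=\frac1{24}\sum_{d\mid M}\chi_T(d)\,d\,E_2^*(d\tau)$. Alternatively, modularity and holomorphy follow directly from $\calE_T^{(M)}=\frac1{24}D\log\bigl(R_T^{(M)}\bigr)^{24}$.

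There is also a missing step in (v). You only conclude $\calE_T^{(M)}\in M_2(\Gamma_0(M))$. Independence of the constant-term vectors together with the count $2^k-1$ then shows only that the span of the $\calE_T^{(M)}$ is a complement to $S_2(\Gamma_0(M))$. It does not show that this span is the Eisenstein subspace, since adding cusp forms leaves constant terms unchanged. You need membership in the Eisenstein subspace. The paper takes this from the standard fact that $E_2(\tau)-dE_2(d\tau)$ is an Eisenstein series (Proposition \ref{prop:E-holomorphic}). Your corrected expression of $\calE_T^{(M)}$ as a holomorphic combination of the $E_2^*(d\tau)$ supplies exactly this, and with both repairs the argument is complete.

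In (iv) your route differs from the paper's. You obtain the eigenvalue on $\calE_T^{(M)}$ by applying $D\log$ to $\bigl(R_T^{(M)}\bigr)^{24}\bigm|W_Q=A_{T,Q}\bigl(R_T^{(M)}\bigr)^{24\chi_T(Q)}$. The paper instead compares constant-term vectors (Theorem \ref{thm:atkin-lehner-eisenstein}). Your version is valid because $D\log(f\circ\gamma)=(D\log f)\bigm|_2\gamma$ for the $\det\gamma$-normalized slash. It also avoids needing that $W_Q$ preserves the Eisenstein subspace and that an Eisenstein form with zero constant terms vanishes.
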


\begin{proof}
The finite Walsh and Fricke assertions are Theorems \ref{thm:master-diagonalization} and \ref{thm:walsh-fricke}, and the multiplier-clearing assertion is Proposition \ref{prop:smaller-exponent}.  The cusp-divisor, Atkin--Lehner, and lattice assertions are proved in Section \ref{sec:cusps}, respectively in Theorem \ref{thm:cusp-diagonalization}, Lemma \ref{lem:AL-cusp-action}, Proposition \ref{prop:atkin-lehner-divisor}, and Theorem \ref{thm:delta-walsh-lattices}.  The Eisenstein basis, Hecke, and local \(U_p\) assertions are proved in Section \ref{sec:eisenstein}, culminating in Theorems \ref{thm:eisenstein-isomorphism}, \ref{thm:Eisenstein-basis}, \ref{thm:atkin-lehner-eisenstein}, \ref{thm:good-hecke}, \ref{thm:Up-local}, and \ref{thm:simultaneous-Up-eigenbasis}.
This proves all parts of the structural summary theorem.
\end{proof}

The squarefree theory has a distinguished application to the classical Heegner prime product
\begin{equation}\label{eq:intro-N}
 N=2\cdot 3\cdot 7\cdot 11\cdot 19\cdot 43\cdot 67\cdot 163=4122175134.
\end{equation}
Let
\[
 \calP=\{2,3,7,11,19,43,67,163\},
 \qquad
 \calH=\{1\}\cup\calP.
\]
The list \(\calH\) is the classical class-number-one Heegner list; for the complex multiplication background see \cite{Cox,GrossZagier,ShimuraCM}.  The class-number-one property motivates the list historically; the proofs use only the fact that \(\calH\setminus\{1\}\) consists of eight distinct primes.  Thus the Heegner product is a distinguished application of the squarefree theory.

Define
\begin{equation}\label{eq:intro-X-Y}
 X_{\calH}(q)=\prod_{h\in\calH}(q^h;q^h)_\infty,
 \qquad
 Y_{\calH}(q)=X_{\calH}(q)^{-1}.
\end{equation}
Since \(\sum_{h\in\calH}h=316\), the eta-normalized product is
\[
 F_+(\tau)=q^{79/6}X_{\calH}(q)=\prod_{h\in\calH}\eta(h\tau).
\]
For the Boolean layer products
\[
 E_r^{(N)}(\tau)=\prod_{\substack{d\mid N\\ \omega(d)=r}}\eta(d\tau),
\]
one has
\[
 F_+(\tau)=E_0^{(N)}(\tau)E_1^{(N)}(\tau).
\]
The Fricke complement sends this boundary to
\[
 F_-(\tau)=E_7^{(N)}(\tau)E_8^{(N)}(\tau)
 =\prod_{a\in\calH^*}\eta(a\tau),
 \qquad
 \calH^*=\{N/h:h\in\calH\}.
\]
The reciprocal product \(Y_{\calH}\) is the generating function for partitions in which a part \(m\) has \(1+\#\{p\in\calP:p\mid m\}\) colours.  Its modular-unit ratios, Fricke reciprocal formulae, and fixed-point identities are consequences of the Boolean theory.

The paper is organized as follows.  Section \ref{sec:layers} proves the finite Boolean-Walsh diagonalization theorem and the layer Fricke formula.  Section \ref{sec:units} introduces the Walsh eta-units and proves their Fricke laws, fixed-point values, Euler products, and the layer--Walsh transition.  Section \ref{sec:cusps} computes cusp divisors, formalizes the Atkin--Lehner action on cusp labels, and distinguishes the Walsh principal cuspidal divisor sublattice from the larger sublattice generated by all \(\Delta\)-eta units.  Section \ref{sec:eisenstein} proves the Eisenstein isomorphism theorem, the Walsh basis theorem, the Atkin--Lehner eigenform property, and the Hecke and \(U_p\) formulae.  Section \ref{sec:heegner} specializes the theory to the Heegner prime product.  Section \ref{sec:cm} records the precise CM algebraicity consequence needed here.  Section \ref{sec:pi} proves the Fricke-fixed identities for \(1/\pi\), including a small-level rapidly convergent model and an accelerated fixed-point product for the full Heegner boundary.  The appendix contains only compressed computational consequences that are not used in the proof of the main Boolean-Walsh theorems.

\section{Boolean divisor cubes and Fricke complementation}\label{sec:layers}

Let \(M=\prod_{p\in\calP_M}p>1\) be squarefree and put \(k=|\calP_M|\).  We identify a divisor \(d\mid M\) with its support \(\supp(d)\subseteq\calP_M\).  For \(T\subseteq\calP_M\), put
\[
 \chi_T(d)=(-1)^{|T\cap\supp(d)|}.
\]
Let
\[
 v_T=(\chi_T(c))_{c\mid M}\in\ZZ^{2^k}.
\]
The vectors \(v_T\) are the Walsh--Hadamard basis of the divisor cube.  Define the squarefree Ligozat matrix
\begin{equation}\label{eq:A-M-def}
 A_M=(a_{c,d})_{c,d\mid M},
 \qquad
 a_{c,d}=\frac{M\gcd(c,d)^2}{cd}.
\end{equation}

\begin{theorem}\label{thm:master-diagonalization}
With the divisor labels ordered compatibly with the prime coordinates, one has
\begin{equation}\label{eq:A-tensor}
 A_M\simeq \bigotimes_{p\mid M}
 \begin{pmatrix}
 p&1\\
 1&p
 \end{pmatrix}.
\end{equation}
Moreover,
\begin{equation}\label{eq:A-eigen}
 A_Mv_T=\Lambda_T^{(M)}v_T,
 \qquad
 \Lambda_T^{(M)}=
 \prod_{p\in T}(p-1)
 \prod_{\substack{p\mid M\\ p\notin T}}(p+1).
\end{equation}
Fricke complementation \(c\mapsto M/c\) acts on \(v_T\) by the scalar \((-1)^{|T|}\).
\end{theorem}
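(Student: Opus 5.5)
The plan is to factor the entry \(a_{c,d}\) prime by prime and let the tensor structure do all the work. Identify each divisor \(c\mid M\) with its indicator vector \((c_p)_{p\in\calP_M}\in\{0,1\}^k\), where \(c_p=1\) exactly when \(p\mid c\). Because \(M\) is squarefree, \(M=\prod_p p\), \(c=\prod_p p^{c_p}\), \(d=\prod_p p^{d_p}\) and \(\gcd(c,d)=\prod_p p^{\min(c_p,d_p)}\). Hence
\[
 a_{c,d}=\prod_{p\mid M} p^{\,1+2\min(c_p,d_p)-c_p-d_p}.
\]
Next I would check the local exponent \(1+2\min(x,y)-x-y\) for \(x,y\in\{0,1\}\). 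It equals \(1\) when \(x=y\) and \(0\) when \(x\neq y\). So the local factor is \(p\) on the diagonal and \(1\) off the diagonal; that is, it is the \((x,y)\) entry of \(B_p=\begin{pmatrix}p&1\\1&p\end{pmatrix}\). A matrix whose entries factor as products of local entries indexed by the coordinates is exactly the Kronecker product in the induced lexicographic ordering. This gives \eqref{eq:A-tensor}, where \(\simeq\) means equality after a simultaneous permutation of rows and columns.

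For the eigenvalues, note that \(v_T\) also factors as a tensor product. We have \(\chi_T(c)=\prod_p \epsilon_p(c_p)\), where \(\epsilon_p=(1,1)\) if \(p\notin T\) and \(\epsilon_p=(1,-1)\) if \(p\in T\). Thus \(v_T=\bigotimes_p \epsilon_p\), in the same ordering. Locally, \(B_p(1,1)^{\mathsf T}=(p+1)(1,1)^{\mathsf T}\) and \(B_p(1,-1)^{\mathsf T}=(p-1)(1,-1)^{\mathsf T}\). By multiplicativity of Kronecker products, \((\bigotimes B_p)(\bigotimes \epsilon_p)=\bigotimes (B_p\epsilon_p)\). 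This yields \(A_Mv_T=\Lambda_T^{(M)}v_T\) with \(\Lambda_T^{(M)}\) the product of the local eigenvalues, which is \eqref{eq:A-eigen}. The permutation implicit in \(\simeq\) is harmless, since \(v_T\) is defined intrinsically on divisor labels.

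Fricke complementation replaces \(c\) by \(M/c\), i.e.\ \(c_p\mapsto 1-c_p\) in every coordinate. Therefore
\[
 \chi_T(M/c)=\prod_{p\in T}(-1)^{1-c_p}=(-1)^{|T|}\chi_T(c),
\]
so the permuted vector equals \((-1)^{|T|}v_T\).

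I don't expect a real obstacle here. The only point needing care is the bookkeeping that the divisor ordering realizing the Kronecker product is the same for \(A_M\) and for \(v_T\), and that is automatic once everything is written in the coordinates \((c_p)\).
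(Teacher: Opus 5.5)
Your proposal is correct and follows the paper's proof essentially step by step: you factor $a_{c,d}$ prime by prime into the local matrix $\begin{pmatrix}p&1\\1&p\end{pmatrix}$, tensor the local eigenvectors $(1,1)$ and $(1,-1)$ with eigenvalues $p+1$ and $p-1$, and then compute the sign under $c\mapsto M/c$. Your explicit exponent $1+2\min(c_p,d_p)-c_p-d_p$ and the coordinatewise flip $c_p\mapsto 1-c_p$ spell out what the paper says more tersely; for the last step the paper uses the multiplicativity $\chi_T(M/c)=\chi_T(M)\chi_T(c)$.
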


\begin{proof}
For a fixed prime \(p\mid M\), write the local exponents of \(p\) in \(c\) and \(d\) as \(0\) or \(1\).  The local factor of \(a_{c,d}\) is \(p\) if the two local exponents are equal, and \(1\) otherwise.  This gives the local matrix
\[
 \begin{pmatrix}p&1\\1&p\end{pmatrix},
\]
and multiplication over \(p\mid M\) gives \eqref{eq:A-tensor}.  The local Walsh vectors \((1,1)\) and \((1,-1)\) have eigenvalues \(p+1\) and \(p-1\), respectively.  Tensoring these local eigenvectors gives \eqref{eq:A-eigen}.

For Fricke complementation,
\[
 \chi_T(M/c)=\chi_T(M)\chi_T(c)=(-1)^{|T|}\chi_T(c).
\]
This proves the asserted scalar action.
\end{proof}

For \(0\le r\le k\), define
\begin{equation}\label{eq:layer-def}
 E_r^{(M)}(\tau)=\prod_{\substack{d\mid M\\ \omega(d)=r}}\eta(d\tau).
\end{equation}
We use the eta inversion formula
\begin{equation}\label{eq:eta-inversion}
 \eta\left(-\frac1z\right)=(-iz)^{1/2}\eta(z),\qquad z\in\HH,
\end{equation}
with the principal square-root branch.

\begin{lemma}\label{lem:count-general}
For \(0\le r\le k\),
\[
 \#\{d\mid M:\omega(d)=r\}=\binom{k}{r}
\]
and
\[
 \prod_{\substack{d\mid M\\ \omega(d)=r}}d=M^{\binom{k-1}{r-1}},
\]
where \(\binom{k-1}{-1}=0\).
\end{lemma}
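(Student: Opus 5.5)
The plan is to exploit the bijection \(d\mapsto\supp(d)\) between the divisors of \(M\) and the subsets of \(\calP_M\), which holds because \(M\) is squarefree. This turns both assertions into elementary counting statements about subsets of a \(k\)-element set.

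For the first identity, note that \(\omega(d)=|\supp(d)|\). The divisors with \(\omega(d)=r\) therefore correspond bijectively to the \(r\)-element subsets of \(\calP_M\), and there are \(\binom{k}{r}\) of them.

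For the product identity, I will compute the exponent of each prime separately. Write
\[
 \prod_{\substack{d\mid M\\ \omega(d)=r}}d=\prod_{\substack{S\subseteq\calP_M\\ |S|=r}}\prod_{p\in S}p=\prod_{p\in\calP_M}p^{\,n_p},
\]
where \(n_p\) is the number of \(r\)-subsets \(S\) containing \(p\). Such an \(S\) is determined by choosing the remaining \(r-1\) elements from the other \(k-1\) primes, so \(n_p=\binom{k-1}{r-1}\), independently of \(p\). Hence the product equals
\[
 \Bigl(\prod_{p\in\calP_M}p\Bigr)^{\binom{k-1}{r-1}}=M^{\binom{k-1}{r-1}}.
\]

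The only point needing care is the boundary case \(r=0\). There the only divisor is \(d=1\), so the product is the empty product \(1\), and the convention \(\binom{k-1}{-1}=0\) gives \(M^0=1\), which agrees. The case \(r=k\) also agrees: the product is \(M\) and \(\binom{k-1}{k-1}=1\). No step is a genuine obstacle; the argument is a double-counting exchange of products.
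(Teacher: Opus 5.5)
Your proposal is correct and follows the paper's proof: the first count is the number of $r$-subsets of the $k$ primes, and the product formula comes from observing that each prime $p\mid M$ lies in exactly $\binom{k-1}{r-1}$ of the divisors with $\omega(d)=r$. Your explicit checks of the boundary cases $r=0$ and $r=k$ are a small addition; the paper handles $r=0$ only through the convention $\binom{k-1}{-1}=0$.
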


\begin{proof}
The first assertion is the number of ways to choose \(r\) primes among the \(k\) primes dividing \(M\).  For the second assertion, fix \(p\mid M\).  Among all divisors \(d\mid M\) with \(\omega(d)=r\), the prime \(p\) occurs in exactly \(\binom{k-1}{r-1}\) divisors.  Multiplying over all \(p\mid M\) gives the stated product.
\end{proof}

\begin{theorem}\label{thm:layer-fricke}
For \(0\le r\le k\) and \(\tau\in\HH\),
\begin{equation}\label{eq:layer-fricke}
 E_r^{(M)}\!\left(-\frac1{M\tau}\right)
 =M^{\frac12\binom{k-1}{r}}(-i\tau)^{\frac12\binom{k}{r}}
 E_{k-r}^{(M)}(\tau).
\end{equation}
\end{theorem}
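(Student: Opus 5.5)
For each divisor $d\mid M$ with $\omega(d)=r$, write $M/d=d'$, so that $\omega(d')=k-r$. The plan is to transform each factor $\eta(d\cdot(-1/(M\tau)))$ separately with the inversion formula \eqref{eq:eta-inversion}, and then multiply over the layer using the counts of Lemma \ref{lem:count-general}.

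For a single factor we have $d\cdot(-1/(M\tau)) = -1/(d'\tau)$. Applying \eqref{eq:eta-inversion} with $z=d'\tau\in\HH$ gives
\[
 \eta\!\left(-\frac{1}{d'\tau}\right)=(-id'\tau)^{1/2}\eta(d'\tau)=d'^{1/2}(-i\tau)^{1/2}\eta(d'\tau).
\]
Separating the square root of the positive real $d'$ is legitimate for the principal branch: $-i\tau$ has positive real part, so its argument lies in $(-\pi/2,\pi/2)$, and multiplying by $d'>0$ does not change the argument.

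Next we take the product over all $d$ with $\omega(d)=r$. The map $d\mapsto M/d$ is a bijection from the $r$-layer onto the $(k-r)$-layer, so the eta factors assemble exactly into $E_{k-r}^{(M)}(\tau)$. There are $\binom{k}{r}$ factors by the first part of Lemma \ref{lem:count-general}, and they contribute $(-i\tau)^{\frac12\binom{k}{r}}$. Here we read $(-i\tau)^{1/2}$ raised to an integer power, which agrees with the principal power, since the argument times $\tfrac12\binom{k}{r}$ is the principal argument computed consistently.

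The remaining scalar is $\prod_{\omega(d')=k-r} d'^{1/2}$. By the second part of Lemma \ref{lem:count-general}, applied to the layer $k-r$, this equals $M^{\frac12\binom{k-1}{k-r-1}}$. Since $\binom{k-1}{k-r-1}=\binom{k-1}{r}$, this is $M^{\frac12\binom{k-1}{r}}$. The case $r=k$ gives $\binom{k-1}{-1}=0$, consistent with $E_0=\eta(\tau)$ having no scalar.

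The only delicate point is the branch bookkeeping for the square roots. It is handled by the remark on the argument of $-i\tau$ above; everything else is the combinatorial count already established.
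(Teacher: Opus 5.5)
Your proof is correct and follows the paper's argument: you invert each factor with $z=(M/d)\tau$, reindex via $d\mapsto M/d$, and evaluate the scalar with Lemma~\ref{lem:count-general}. The differences are cosmetic. You apply the product formula directly to the complementary layer and use $\binom{k-1}{k-r-1}=\binom{k-1}{r}$, whereas the paper writes the scalar as $M^{\frac12\binom{k}{r}}\bigl(\prod d\bigr)^{-1/2}$ and uses Pascal's rule. You also check the principal-branch bookkeeping explicitly, which the paper leaves implicit.
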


\begin{proof}
For each \(d\mid M\), formula \eqref{eq:eta-inversion} with \(z=(M/d)\tau\) gives
\[
 \eta\left(-\frac{d}{M\tau}\right)
 =\left(-i\frac{M}{d}\tau\right)^{1/2}\eta\left(\frac{M}{d}\tau\right).
\]
Multiplication over all \(d\mid M\) with \(\omega(d)=r\) gives
\[
 E_r^{(M)}\!\left(-\frac1{M\tau}\right)
 =(-i\tau)^{\frac12\binom{k}{r}}
 M^{\frac12\binom{k}{r}}
 \left(\prod_{\substack{d\mid M\\ \omega(d)=r}}d\right)^{-1/2}
 \prod_{\substack{d\mid M\\ \omega(d)=r}}
 \eta\left(\frac{M}{d}\tau\right).
\]
The map \(d\mapsto M/d\) sends the \(r\)-th layer bijectively to the \((k-r)\)-th layer.  Lemma \ref{lem:count-general} gives the remaining power of \(M\), because
\[
 \frac12\binom{k}{r}-\frac12\binom{k-1}{r-1}
 =\frac12\binom{k-1}{r}.
\]
This proves \eqref{eq:layer-fricke}.
\end{proof}

Multiplying \eqref{eq:layer-fricke} for \(0\le r\le s\) shows that the lower Boolean boundary
\[
 B_s^{(M)}(\tau)=E_0^{(M)}(\tau)E_1^{(M)}(\tau)\cdots E_s^{(M)}(\tau)
\]
is sent by the Fricke involution to the complementary upper boundary
\[
 E_{k-s}^{(M)}(\tau)E_{k-s+1}^{(M)}(\tau)\cdots E_k^{(M)}(\tau),
\]
multiplied by the explicit product of the factors in \eqref{eq:layer-fricke}.  This consequence is used below only to identify the Heegner boundary product with its opposite Boolean boundary.

\section{Boolean-Walsh eta-units}\label{sec:units}

For \(T\subseteq\calP_M\), set
\begin{equation}\label{eq:chi-def}
 \chi_T(d)=(-1)^{|T\cap\supp(d)|},\qquad d\mid M.
\end{equation}
The functions \(\chi_T\) are precisely the real characters of the Boolean group of squarefree divisors of \(M\).  They satisfy the orthogonality relations
\begin{equation}\label{eq:walsh-orthogonality}
 \sum_{d\mid M}\chi_T(d)\chi_U(d)=
 \begin{cases}
 2^k,& T=U,\\
 0,& T\ne U.
 \end{cases}
\end{equation}
For \(T\subseteq\calP_M\), define
\begin{equation}\label{eq:RT-general}
 R_T^{(M)}(\tau)=\prod_{d\mid M}\eta(d\tau)^{\chi_T(d)}.
\end{equation}
If \(T\ne\varnothing\), then \(\sum_{d\mid M}\chi_T(d)=0\), so \(R_T^{(M)}\) has weight zero.  The eta multiplier may be non-trivial.  However,
\begin{equation}\label{eq:RT-24}
 \bigl(R_T^{(M)}(\tau)\bigr)^{24}
 =\prod_{d\mid M}\Delta(d\tau)^{\chi_T(d)}
\end{equation}
is a modular unit on \(X_0(M)\), because the weights cancel and \(\Delta\) has no zeros in \(\HH\).

\begin{theorem}\label{thm:walsh-fricke}
Let \(T\ne\varnothing\).  Then
\begin{equation}\label{eq:walsh-fricke}
 R_T^{(M)}\!\left(-\frac1{M\tau}\right)
 =\mathfrak c_T^{(M)}R_T^{(M)}(\tau)^{(-1)^{|T|}},
\end{equation}
where
\begin{equation}\label{eq:fricke-constant}
 \mathfrak c_T^{(M)}=
 \begin{cases}
 p^{2^{k-2}},& T=\{p\},\\
 1,& |T|\ge 2.
 \end{cases}
\end{equation}
\end{theorem}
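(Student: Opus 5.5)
The plan is to apply the eta inversion formula \eqref{eq:eta-inversion} factor by factor, exactly as in the proof of Theorem \ref{thm:layer-fricke}, and then reindex by Fricke complementation $d\mapsto M/d$. For each $d\mid M$, taking $z=(M/d)\tau$ in \eqref{eq:eta-inversion} gives
\[
 \eta\!\left(-\frac{d}{M\tau}\right)=\Bigl(\frac{M}{d}\Bigr)^{1/2}(-i\tau)^{1/2}\,\eta\!\left(\frac{M}{d}\tau\right).
\]
Splitting $(-i(M/d)\tau)^{1/2}=(M/d)^{1/2}(-i\tau)^{1/2}$ is legitimate for the principal branch because $M/d>0$. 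Each exponent $\chi_T(d)=\pm1$ is an integer, so raising to $\chi_T(d)$ and multiplying over $d\mid M$ raises no further branch issue. The product then becomes $(-i\tau)^{\frac12\sum_d\chi_T(d)}M^{\frac12\sum_d\chi_T(d)}\prod_d d^{-\chi_T(d)/2}\prod_d\eta((M/d)\tau)^{\chi_T(d)}$.

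Since $T\ne\varnothing$, orthogonality \eqref{eq:walsh-orthogonality} with $U=\varnothing$ gives $\sum_{d}\chi_T(d)=0$. Hence the automorphy factor $(-i\tau)^{\cdots}$ and the power of $M$ both disappear. In the remaining eta product substitute $e=M/d$. By the last assertion of Theorem \ref{thm:master-diagonalization}, $\chi_T(d)=\chi_T(M/e)=(-1)^{|T|}\chi_T(e)$, so this product is $R_T^{(M)}(\tau)^{(-1)^{|T|}}$. What is left is the constant
\[
 C_T=\prod_{d\mid M}d^{-\chi_T(d)/2}.
\]

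The main step is evaluating $C_T$, which I would do via logarithms. Writing $\log d=\sum_{p\in\supp(d)}\log p$ and swapping sums gives
\[
 \log C_T=-\tfrac12\sum_{p\mid M}\log p\sum_{d:\,p\mid d}\chi_T(d).
\]
Writing $d=pd'$ with $d'\mid M/p$, the inner sum is $\chi_T(p)\sum_{d'\mid M/p}\chi_T(d')$. By orthogonality on the smaller cube for $M/p$, this equals $2^{k-1}\chi_T(p)$ when $T\setminus\{p\}=\varnothing$ and $0$ otherwise. Since $T$ is nonempty, a nonzero contribution occurs only when $T=\{p\}$, and then $\chi_T(p)=-1$. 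This gives $\log C_T=\tfrac12\cdot 2^{k-1}\log p$, so $C_T=p^{2^{k-2}}$ when $T=\{p\}$ and $C_T=1$ when $|T|\ge2$. That is exactly \eqref{eq:fricke-constant}, and for $k=1$ it reads $p^{1/2}$.

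The main point needing care is not the algebra but the bookkeeping of branches and of the multiplier. The identity is asserted for the eta-quotient itself, not only for its $24$th power. The argument above handles this, since it only ever uses \eqref{eq:eta-inversion} with a positive real scaling and integer exponents.
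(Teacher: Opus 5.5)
Your proposal is correct and follows the paper's proof essentially step for step. Both arguments apply the eta inversion factor by factor, cancel the automorphy factor and the power of $M$ using $\sum_{d\mid M}\chi_T(d)=0$, reindex by $d\mapsto M/d$, and then evaluate $\prod_{d\mid M}d^{-\chi_T(d)/2}$ prime by prime. Your reduction of the inner sum to orthogonality on the cube of $M/p$, and your remark on the principal branch, spell out two points that the paper states without detail.
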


\begin{proof}
Using \eqref{eq:eta-inversion}, we obtain
\[
 R_T^{(M)}\!\left(-\frac1{M\tau}\right)
 =\prod_{d\mid M}\left(-i\frac{M}{d}\tau\right)^{\chi_T(d)/2}
 \eta\left(\frac{M}{d}\tau\right)^{\chi_T(d)}.
\]
Because \(T\ne\varnothing\), the sum \(\sum_{d\mid M}\chi_T(d)\) is zero.  Hence the factors involving \(-i\tau\) and the common factor \(M\) cancel.  Replacing \(e=M/d\), we get
\[
 \prod_{d\mid M}\eta\left(\frac{M}{d}\tau\right)^{\chi_T(d)}
 =\prod_{e\mid M}\eta(e\tau)^{\chi_T(M/e)}.
\]
Since
\[
 \chi_T(M/e)=\chi_T(M)\chi_T(e)=(-1)^{|T|}\chi_T(e),
\]
this eta-product is \(R_T^{(M)}(\tau)^{(-1)^{|T|}}\).

It remains to compute the constant
\[
 \prod_{d\mid M}d^{-\chi_T(d)/2}.
\]
The exponent of a prime \(p\mid M\) in \(\prod_{d\mid M}d^{\chi_T(d)}\) is
\[
 \sum_{\substack{d\mid M\\ p\mid d}}\chi_T(d).
\]
If \(p\notin T\), this sum is zero unless \(T=\varnothing\), which is excluded.  If \(p\in T\), it is also zero unless \(T=\{p\}\); in the exceptional case \(T=\{p\}\), the sum is \(-2^{k-1}\).  Thus
\[
 \prod_{d\mid M}d^{-\chi_T(d)/2}=p^{2^{k-2}}
\]
when \(T=\{p\}\), and it is \(1\) when \(|T|\ge 2\).  This proves the theorem.
\end{proof}

\begin{remark}
When \(k=1\) and \(T=\{p\}\), the exponent \(2^{k-2}\) in \eqref{eq:fricke-constant} is \(1/2\), so the Fricke constant is \(p^{1/2}\).  This is harmless for the eta-quotient itself; after taking the twenty-fourth power the corresponding constant is \(p^{12}\).  If \(|T|\) is even, \eqref{eq:walsh-fricke} gives a Fricke-invariant eta-quotient up to the explicitly computed constant; if \(|T|\) is odd, it gives a reciprocal relation.
\end{remark}

\begin{corollary}\label{cor:fixed-values}
Let \(\tau_M=i/\sqrt M\).  If \(|T|\) is odd, then
\begin{equation}\label{eq:fixed-values}
 R_T^{(M)}(\tau_M)=
 \begin{cases}
 p^{2^{k-3}},& T=\{p\},\\
 1,& |T|\ge 3.
 \end{cases}
\end{equation}
\end{corollary}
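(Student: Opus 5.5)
The plan is to evaluate the Fricke law of Theorem \ref{thm:walsh-fricke} at the fixed point \(\tau_M=i/\sqrt M\) of \(W_M\). Indeed \(-1/(M\tau_M)=-\sqrt M/(Mi)=i/\sqrt M=\tau_M\), so \(\tau_M\) is fixed. For \(|T|\) odd, \eqref{eq:walsh-fricke} then reads
\[
 R_T^{(M)}(\tau_M)=\mathfrak c_T^{(M)}\,R_T^{(M)}(\tau_M)^{-1},
 \qquad\text{that is,}\qquad
 R_T^{(M)}(\tau_M)^2=\mathfrak c_T^{(M)}.
\]
By \eqref{eq:fricke-constant}, the right side is \(p^{2^{k-2}}\) when \(T=\{p\}\) and \(1\) when \(|T|\ge3\). (Odd \(|T|\ge2\) means \(|T|\ge3\).)

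It remains to fix the sign of the square root. For this I will show that \(R_T^{(M)}(\tau_M)\) is a positive real number. On the imaginary axis \(\tau=iy\) we have \(q=e^{-2\pi y}\in(0,1)\), so \(\eta(d\tau)=q^{d/24}\prod_{n\ge1}(1-q^{dn})\) is a product of positive reals with a positive real prefactor \(e^{-2\pi dy/24}\). Hence each \(\eta(d\tau_M)>0\), and so \(R_T^{(M)}(\tau_M)=\prod_{d\mid M}\eta(d\tau_M)^{\chi_T(d)}\) is positive, being a product of positive reals raised to the integer powers \(\pm1\). Also \(\mathfrak c_T^{(M)}\) is a positive real, including when \(k=1\), where it equals \(p^{1/2}\). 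Taking the positive square root gives \(R_T^{(M)}(\tau_M)=p^{2^{k-3}}\) for \(T=\{p\}\) and \(1\) otherwise.

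The only delicate point is the branch bookkeeping. Theorem \ref{thm:walsh-fricke} was proved as an identity of holomorphic functions, with the \((-i\tau)\)-powers cancelling because the total weight is zero, so no branch ambiguity enters the evaluation. For \(k=1\) the exponent \(2^{k-3}=1/4\) is interpreted as the positive real fourth root. I expect no genuine obstacle; the positivity argument is what rules out the value \(-p^{2^{k-3}}\).
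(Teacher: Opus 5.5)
Your proposal is correct and follows the paper's proof essentially verbatim. You evaluate the Fricke law of Theorem~\ref{thm:walsh-fricke} at the fixed point \(\tau_M\) to get \(R_T^{(M)}(\tau_M)^2=\mathfrak c_T^{(M)}\), then use positivity of each \(\eta(d\tau_M)\) on the imaginary axis to select the positive square root; your added details (checking that \(\tau_M\) is fixed, and reading \(k=1\) as \(p^{1/4}\)) are consistent with the paper.
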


\begin{proof}
The point \(\tau_M\) is fixed by \(W_M\).  If \(|T|\) is odd, Theorem \ref{thm:walsh-fricke} gives
\[
 R_T^{(M)}(\tau_M)^2=\mathfrak c_T^{(M)}.
\]
For \(\tau_M=i/\sqrt M\), every factor \(\eta(d\tau_M)\) is a positive real number, so the positive square-root is selected.  Formula \eqref{eq:fixed-values} follows from \eqref{eq:fricke-constant}.
\end{proof}

The following Euler product form gives a partition-theoretic meaning to the Walsh units.  Write
\begin{equation}\label{eq:kappa-def}
 \kappa_T^{(M)}=\frac1{24}\sum_{d\mid M}\chi_T(d)d
 =\frac1{24}\prod_{p\mid M}\bigl(1+\chi_T(p)p\bigr).
\end{equation}
Then
\begin{equation}\label{eq:RT-q-product}
 R_T^{(M)}(\tau)=q^{\kappa_T^{(M)}}
 \prod_{m\ge 1}(1-q^m)^{\alpha_T^{(M)}(m)},
\end{equation}
where
\begin{equation}\label{eq:alpha-def}
 \alpha_T^{(M)}(m)=\sum_{d\mid (m,M)}\chi_T(d).
\end{equation}
Since \(M\) is squarefree,
\begin{equation}\label{eq:alpha-closed}
 \alpha_T^{(M)}(m)=
 \begin{cases}
 2^{\#\{p\mid M:p\notin T,\ p\mid m\}},& (m,\prod_{p\in T}p)=1,\\
 0,& (m,\prod_{p\in T}p)>1.
 \end{cases}
\end{equation}
In particular, when \(T=\calP_M\),
\begin{equation}\label{eq:mobius-euler}
 R_{\calP_M}^{(M)}(\tau)
 =q^{\kappa_{\calP_M}^{(M)}}
 \prod_{(m,M)=1}(1-q^m).
\end{equation}

The next identity is included only to record the Krawtchouk transform relating the layer basis \(E_r^{(M)}\) to the character basis \(R_T^{(M)}\).

\begin{proposition}\label{prop:krawtchouk}
For \(0\le s\le k\), put
\[
 \mathcal R_s^{(M)}(\tau)=\prod_{\substack{T\subseteq\calP_M\\ |T|=s}}R_T^{(M)}(\tau).
\]
Then
\begin{equation}\label{eq:krawtchouk}
 \mathcal R_s^{(M)}(\tau)
 =\prod_{r=0}^{k}E_r^{(M)}(\tau)^{K_s(r;k)},
\end{equation}
where
\begin{equation}\label{eq:krawtchouk-poly}
 K_s(r;k)=\sum_{j=0}^{s}(-1)^j\binom{r}{j}\binom{k-r}{s-j}
\end{equation}
is the binary Krawtchouk polynomial.
\end{proposition}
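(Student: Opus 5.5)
The plan is to compare the exponent of each individual factor \(\eta(d\tau)\) on the two sides of \eqref{eq:krawtchouk}. This is legitimate because both sides are finite products of the functions \(\eta(d\tau)\), \(d\mid M\). Expanding the definition \eqref{eq:RT-general}, the left side equals
\[
 \mathcal R_s^{(M)}(\tau)=\prod_{d\mid M}\eta(d\tau)^{\sum_{|T|=s}\chi_T(d)}.
\]
The right side equals \(\prod_{d\mid M}\eta(d\tau)^{K_s(\omega(d);k)}\), since \(E_r^{(M)}\) is the product of \(\eta(d\tau)\) over the divisors \(d\) with \(\omega(d)=r\), and each \(d\mid M\) lies in exactly one layer, namely \(r=\omega(d)\). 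It therefore suffices to prove, for every \(d\mid M\) with \(r=\omega(d)\), the combinatorial identity
\[
 \sum_{\substack{T\subseteq\calP_M\\ |T|=s}}(-1)^{|T\cap\supp(d)|}=K_s(r;k).
\]

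To prove it, classify the \(s\)-subsets \(T\) by \(j=|T\cap\supp(d)|\). Choosing such a \(T\) amounts to choosing \(j\) primes from the \(r\)-element set \(\supp(d)\) and \(s-j\) primes from its \((k-r)\)-element complement. There are \(\binom{r}{j}\binom{k-r}{s-j}\) ways, and each contributes \((-1)^j\). Summing over \(0\le j\le s\) gives exactly \eqref{eq:krawtchouk-poly}. Terms with \(j>r\) or \(s-j>k-r\) vanish automatically, because the corresponding binomial coefficients are zero.

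There is no real obstacle. The only point needing care is that the exponents \(K_s(r;k)\) may be negative, so the identity should be read as an identity of meromorphic (in fact nowhere-vanishing) functions on \(\HH\), with branches of \(\eta(d\tau)\) taken consistently on both sides. Because we compare integer exponents of the same fixed functions \(\eta(d\tau)\), this is automatic. For \(s=0\) the identity reduces to \(\mathcal R_0^{(M)}=\prod_{d\mid M}\eta(d\tau)=\prod_r E_r^{(M)}\), consistent with \(K_0(r;k)=1\).
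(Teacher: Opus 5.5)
Your proposal is correct and follows the paper's proof: you compare the exponent of each $\eta(d\tau)$ on both sides, then evaluate $\sum_{|T|=s}(-1)^{|T\cap\supp(d)|}$ by sorting $T$ according to $j=|T\cap\supp(d)|$ and counting $\binom{r}{j}\binom{k-r}{s-j}$ choices. Your remark about branches is unnecessary, since every exponent is an integer and $\eta$ is a nowhere-vanishing holomorphic function on $\HH$.
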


\begin{proof}
Fix a divisor \(d\mid M\) with \(\omega(d)=r\).  The exponent of \(\eta(d\tau)\) in \(\mathcal R_s^{(M)}\) is
\[
 \sum_{\substack{T\subseteq\calP_M\\ |T|=s}}
 (-1)^{|T\cap\supp(d)|}.
\]
If \(|T\cap\supp(d)|=j\), then the remaining \(s-j\) elements of \(T\) are chosen from the \(k-r\) primes not dividing \(d\).  Thus the exponent is \(K_s(r;k)\), proving \eqref{eq:krawtchouk}.
\end{proof}

\section{Cuspidal divisors and Atkin--Lehner action}\label{sec:cusps}

We now compute all cusp orders.  For squarefree \(M\), the cusps of \(\Gamma_0(M)\) are represented by \(1/c\), with \(c\mid M\).  Choose a scaling matrix taking \(\infty\) to \(1/c\).  The width of this cusp is \(M/c\), and all orders below are measured with respect to the width-normalized local parameter obtained from this scaling.  With this convention the matrix in \eqref{eq:A-M-def} is exactly the matrix of orders of the forms \(\Delta(d\tau)\) at the cusps \(1/c\).

We use Ligozat's cusp-order formula in the following squarefree form.  If
\[
 f(\tau)=\prod_{d\mid M}\eta(d\tau)^{r_d},
\]
then
\begin{equation}\label{eq:ligozat-squarefree}
 24\,\ord_{1/c}f
 =\sum_{d\mid M}r_d\frac{M\gcd(c,d)^2}{cd}.
\end{equation}
This is the standard Ligozat formula specialized to squarefree level; see \cite{Ligozat,Martin,Ono,RouseWebb}.

When we write $\ord_{1/c}R_T^{(M)}$, we mean the cusp exponent obtained from the local eta-product expansion with this width-normalized parameter.  Divisors on the compact curve $X_0(M)$ are taken only for genuine modular units, such as $(R_T^{(M)})^{24}$ or the sufficient power supplied by Proposition~\ref{prop:smaller-exponent}.

\begin{lemma}\label{lem:layer-cusp-generating}
For every \(c\mid M\),
\begin{equation}\label{eq:layer-cusp-generating}
 \sum_{r=0}^{k}24\,\ord_{1/c}E_r^{(M)}\,u^r
 =\prod_{p\mid c}(1+pu)\prod_{\substack{p\mid M\\ p\nmid c}}(p+u).
\end{equation}
\end{lemma}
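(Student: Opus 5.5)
The plan is to apply the squarefree Ligozat formula \eqref{eq:ligozat-squarefree} to each layer product \(E_r^{(M)}\). The exponent vector of \(E_r^{(M)}\) is \(r_d=1\) when \(\omega(d)=r\) and \(r_d=0\) otherwise. This gives
\[
 24\,\ord_{1/c}E_r^{(M)}=\sum_{\substack{d\mid M\\ \omega(d)=r}}a_{c,d},
 \qquad a_{c,d}=\frac{M\gcd(c,d)^2}{cd}.
\]
Multiplying by \(u^r\) and summing over \(r\) collapses the layer decomposition into a single sum over all divisors:
\[
 \sum_{r=0}^k 24\,\ord_{1/c}E_r^{(M)}\,u^r=\sum_{d\mid M}a_{c,d}\,u^{\omega(d)}.
\]

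Next I factor this sum prime by prime, as in the proof of Theorem \ref{thm:master-diagonalization}. Since \(M\) is squarefree, both \(a_{c,d}\) and \(u^{\omega(d)}\) are multiplicative over the primes \(p\mid M\). The local factor of \(a_{c,d}\) is \(p\) if \(p\) divides both or neither of \(c,d\), and \(1\) otherwise. The local factor of \(u^{\omega(d)}\) is \(u\) if \(p\mid d\) and \(1\) otherwise. Choosing \(d\) amounts to choosing, independently for each \(p\), whether \(p\mid d\), so the sum is a product over \(p\mid M\) of local two-term sums.

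The local sums are as follows:
\begin{itemize}
\item If \(p\mid c\), the choice \(p\nmid d\) contributes \(1\) and the choice \(p\mid d\) contributes \(p\cdot u\), giving \(1+pu\).
\item If \(p\nmid c\), the choice \(p\nmid d\) contributes \(p\) and the choice \(p\mid d\) contributes \(1\cdot u\), giving \(p+u\).
\end{itemize}
The product of these local factors is the right side of \eqref{eq:layer-cusp-generating}.

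There is no real obstacle here. The only point needing care is to check that the local factor of \(a_{c,d}\) is correctly assigned in the four cases, in particular that it equals \(p\) (not \(1\)) when \(p\) divides neither \(c\) nor \(d\). The distributive factorization then finishes the proof.
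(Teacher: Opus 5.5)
Your proof is correct and follows the paper's own argument: you apply the squarefree Ligozat formula to each layer, combine the layers into the single sum $\sum_{d\mid M}a_{c,d}\,u^{\omega(d)}$, and factor it prime by prime into the local factors $1+pu$ (for $p\mid c$) and $p+u$ (for $p\nmid c$). You also verify the four cases of the local factor of $a_{c,d}$ explicitly, which the paper leaves implicit.
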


\begin{proof}
By \eqref{eq:ligozat-squarefree}, the left side is
\[
 \sum_{d\mid M}\frac{M\gcd(c,d)^2}{cd}u^{\omega(d)}.
\]
This sum factors prime by prime.  If \(p\mid c\), the local contribution is \(1+pu\).  If \(p\nmid c\), the local contribution is \(p+u\).  Multiplication over all \(p\mid M\) proves \eqref{eq:layer-cusp-generating}.
\end{proof}

\begin{theorem}\label{thm:cusp-diagonalization}
Let \(T\ne\varnothing\).  For every \(c\mid M\),
\begin{equation}\label{eq:cusp-diagonalization}
 \ord_{1/c}R_T^{(M)}=\frac{\Lambda_T^{(M)}}{24}\chi_T(c),
\end{equation}
where
\begin{equation}\label{eq:Lambda-def}
 \Lambda_T^{(M)}=
 \prod_{p\in T}(p-1)
 \prod_{\substack{p\mid M\\ p\notin T}}(p+1).
\end{equation}
Equivalently,
\begin{equation}\label{eq:div-RT24}
 \divisor\bigl((R_T^{(M)})^{24}\bigr)
 =\Lambda_T^{(M)}\sum_{c\mid M}\chi_T(c)[1/c].
\end{equation}
\end{theorem}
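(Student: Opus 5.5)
The plan is to apply Ligozat's squarefree formula \eqref{eq:ligozat-squarefree} with exponents $r_d=\chi_T(d)$ and recognize the resulting sum as a matrix--vector product with the Ligozat matrix $A_M$ of \eqref{eq:A-M-def}. Concretely, for $f=R_T^{(M)}$ we get
\[
 24\,\ord_{1/c}R_T^{(M)}=\sum_{d\mid M}a_{c,d}\,\chi_T(d)=(A_Mv_T)_c .
\]
Theorem \ref{thm:master-diagonalization} gives $A_Mv_T=\Lambda_T^{(M)}v_T$. Reading off the $c$-th coordinate yields $24\,\ord_{1/c}R_T^{(M)}=\Lambda_T^{(M)}\chi_T(c)$, which is \eqref{eq:cusp-diagonalization}. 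As a check, one can also see the eigenvalue directly: the sum factors prime by prime, as in the proof of Lemma \ref{lem:layer-cusp-generating}. The local factor is $p\pm1$ when $p\notin T$, and $\pm(p-1)$ when $p\in T$, with the sign given by whether $p\mid c$; these signs multiply to $\chi_T(c)$.

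For \eqref{eq:div-RT24}, I will pass to the genuine modular unit $(R_T^{(M)})^{24}$. By \eqref{eq:RT-24} it is a modular function on $X_0(M)$ with no zeros or poles in $\HH$, since $\Delta$ is nonvanishing there. Its divisor is therefore supported on the cusps $1/c$, $c\mid M$, which represent all cusps at squarefree level. The order at $1/c$ in the width-normalized local parameter is $24$ times the local eta exponent, hence equal to $\Lambda_T^{(M)}\chi_T(c)$. This is an integer, as it must be. As a consistency check, the total degree is $\Lambda_T^{(M)}\sum_c\chi_T(c)=0$ by \eqref{eq:walsh-orthogonality} with $U=\varnothing$, using $T\ne\varnothing$.

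The main point needing care is the normalization. The convention fixed at the start of Section \ref{sec:cusps} says the entries $a_{c,d}$ of \eqref{eq:A-M-def} are exactly the orders of $\Delta(d\tau)$ at $1/c$ in the width-normalized parameter. Only with this convention does the order of the eta-quotient at the cusp equal the divisor multiplicity on $X_0(M)$. I will verify this against the standard Ligozat formula:
\[
 \frac{N}{24}\sum_{d}\frac{\gcd(c,d)^2r_d}{\gcd(c,N/c)\,c\,d}.
\]
For squarefree $N=M$ we have $\gcd(c,M/c)=1$, so this reduces to \eqref{eq:ligozat-squarefree}, and the width factor $M/c$ is already built into that expression. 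Once this identification is granted, the proof is the two-line eigenvector computation above.
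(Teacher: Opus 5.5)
Your proof is correct and is essentially the paper's argument: both substitute $r_d=\chi_T(d)$ into the squarefree Ligozat formula \eqref{eq:ligozat-squarefree} and evaluate the sum prime by prime; the paper does the local computation directly, while you cite the eigenvector identity $A_Mv_T=\Lambda_T^{(M)}v_T$ of Theorem~\ref{thm:master-diagonalization}, which is proved by that same local computation. One slip in your side check: for $p\notin T$ the local factor is $p+1$ whether or not $p\mid c$, not $p\pm1$, and only primes $p\in T$ with $p\mid c$ contribute a sign (via $1-p=-(p-1)$), which is why the signs multiply to $\chi_T(c)$.
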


\begin{proof}
Substituting \(r_d=\chi_T(d)\) into \eqref{eq:ligozat-squarefree} gives
\[
 24\,\ord_{1/c}R_T^{(M)}
 =\sum_{d\mid M}\chi_T(d)\frac{M\gcd(c,d)^2}{cd}.
\]
This sum factors over primes.  If \(p\nmid c\), the local factor is \(p+
\chi_T(p)\).  If \(p\mid c\), the local factor is \(1+\chi_T(p)p\).  For \(p\nmid c\), this is \(p-1\) when \(p\in T\) and \(p+1\) when \(p\notin T\).  For \(p\mid c\), it is \(- (p-1)=\chi_T(p)(p-1)\) when \(p\in T\), and \(p+1=\chi_T(p)(p+1)\) when \(p\notin T\).  The product is therefore \(\Lambda_T^{(M)}\chi_T(c)\), which proves \eqref{eq:cusp-diagonalization}.  Formula \eqref{eq:div-RT24} follows by multiplying orders by \(24\).
\end{proof}

We shall use the following standard Newman--Ligozat criterion in a sufficient form.  Let
\[
 f(\tau)=\prod_{d\mid M}\eta(d\tau)^{r_d},\qquad r_d\in\ZZ.
\]
If
\begin{equation}\label{eq:newman-ligozat-criterion}
\begin{aligned}
 \sum_{d\mid M}r_d&=0,\qquad
 &\sum_{d\mid M}dr_d&\equiv0\pmod{24},\\
 \sum_{d\mid M}\frac{M}{d}r_d&\equiv0\pmod{24},\qquad
 &\prod_{d\mid M}d^{r_d}&\in(\QQ^\times)^2.
\end{aligned}
\end{equation}
then \(f\) is a modular function on \(\Gamma_0(M)\).  Since eta has no zeros in \(\HH\), such an \(f\) is a modular unit whenever its zeros and poles are confined to the cusps.  This is the form of the Newman--Ligozat criterion used below; see \cite{Newman,Ligozat,GordonHughes,Martin}.

\begin{proposition}\label{prop:smaller-exponent}
Let \(T\ne\varnothing\).  If \(M\) has at least two prime factors, put
\[
 e_T^{(M)}=\frac{24}{\gcd(24,\Lambda_T^{(M)})}.
\]
If \(M=p\) is prime and \(T=\{p\}\), put
\[
 e_T^{(M)}=\operatorname{lcm}\left(2,\frac{24}{\gcd(24,p-1)}\right).
\]
Then \((R_T^{(M)})^{e_T^{(M)}}\) is a modular unit on \(X_0(M)\).  The exponent \(e_T^{(M)}\) is a sufficient exponent obtained from the Newman--Ligozat congruences; no minimality is asserted.  In particular, the universal exponent \(24\) used throughout the paper is often larger than necessary.
\end{proposition}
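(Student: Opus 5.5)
The plan is to apply the Newman--Ligozat criterion \eqref{eq:newman-ligozat-criterion} directly to the eta-quotient
\[
 f=(R_T^{(M)})^{e}=\prod_{d\mid M}\eta(d\tau)^{r_d},\qquad r_d=e\,\chi_T(d),\quad e=e_T^{(M)}.
\]
These exponents are integers, so the criterion applies. I will check its four conditions one at a time, using the character sums already computed in Theorems \ref{thm:walsh-fricke} and \ref{thm:cusp-diagonalization}.

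\emph{Weight condition.} Since $T\ne\varnothing$, orthogonality \eqref{eq:walsh-orthogonality} gives $\sum_d r_d=e\sum_d\chi_T(d)=0$.

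\emph{The two congruences.} By \eqref{eq:kappa-def},
\[
 \sum_d d\,r_d=e\prod_{p\mid M}\bigl(1+\chi_T(p)p\bigr).
\]
Each local factor $1+\chi_T(p)p$ equals $p+1$ if $p\notin T$ and $-(p-1)$ if $p\in T$. Hence the product is $\pm\Lambda_T^{(M)}$; this is the same local computation as in the proof of Theorem \ref{thm:cusp-diagonalization}. For the second congruence, substitute $d\mapsto M/d$ and use $\chi_T(M/d)=(-1)^{|T|}\chi_T(d)$. This gives
\[
 \sum_d \tfrac{M}{d}\,r_d=(-1)^{|T|}\sum_d d\,r_d=\pm e\,\Lambda_T^{(M)}.
\]
So both congruences reduce to $24\mid e\,\Lambda_T^{(M)}$. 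This holds by the definition $e=24/\gcd(24,\Lambda_T^{(M)})$. In the prime case $\Lambda_{\{p\}}^{(p)}=p-1$, and the lcm is a multiple of $24/\gcd(24,p-1)$, so it holds there too.

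\emph{Square condition.} I expect this to be the only step needing care. From the proof of Theorem \ref{thm:walsh-fricke}, the exponent of a prime $p$ in $\prod_d d^{\chi_T(d)}$ is $\sum_{p\mid d}\chi_T(d)$. This sum vanishes unless $T=\{p\}$, in which case it equals $-2^{k-1}$. Therefore
\[
 \prod_d d^{r_d}=p^{-e\,2^{k-1}}\ \text{ if } T=\{p\},\qquad \prod_d d^{r_d}=1\ \text{ otherwise}.
\]
When $k\ge2$, the exponent $e\,2^{k-1}$ is even, so the product is a rational square for every $e$. When $k=1$, i.e.\ $M=p$ and $T=\{p\}$, the product is $p^{-e}$, which requires $e$ to be even. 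This is exactly why the factor $2$ enters the lcm in that case.

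\emph{Conclusion.} With all four conditions satisfied, $f$ is a modular function on $\Gamma_0(M)$. Since $\eta$ has no zeros or poles in $\HH$, $f$ has its divisor supported on the cusps and is therefore a modular unit on $X_0(M)$. No minimality is claimed, so nothing further is needed.
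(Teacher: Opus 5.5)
Your proposal is correct and follows essentially the same argument as the paper's proof. You apply the Newman--Ligozat criterion to $r_d=e\,\chi_T(d)$, and you reduce both congruences to $24\mid e\,\Lambda_T^{(M)}$ via $\sum_d d\,\chi_T(d)=(-1)^{|T|}\Lambda_T^{(M)}$ and Fricke complementation. You then settle the square condition through the exceptional case $T=\{p\}$, where the prime exponent $-e\,2^{k-1}$ is automatically even unless $k=1$; that is exactly why the factor $2$ appears in the lcm.
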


\begin{proof}
We use the Newman--Ligozat criterion for eta-quotients.  For the exponent vector
\[
 r_d=e_T^{(M)}\chi_T(d),\qquad d\mid M,
\]
the weight is zero because \(T\ne\varnothing\) and \(\sum_{d\mid M}\chi_T(d)=0\).  Moreover
\[
 \sum_{d\mid M}d\chi_T(d)=(-1)^{|T|}\Lambda_T^{(M)},
 \qquad
 \sum_{d\mid M}\frac{M}{d}\chi_T(d)=\Lambda_T^{(M)}.
\]
The definition of \(e_T^{(M)}\) makes both sums divisible by \(24\) after multiplication by \(e_T^{(M)}\).

It remains to check the square condition for the eta character.  The exponent of a prime \(p\mid M\) in
\[
 \prod_{d\mid M}d^{e_T^{(M)}\chi_T(d)}
\]
is
\[
 e_T^{(M)}\sum_{\substack{d\mid M\\ p\mid d}}\chi_T(d).
\]
This sum is zero unless \(T=\{p\}\).  In that exceptional case it is \(-2^{k-1}\), where \(k=|\calP_M|\).  Hence the exponent is even whenever \(k\ge2\).  When \(k=1\), the extra factor \(2\) in the definition of \(e_T^{(M)}\) makes it even.  The Newman--Ligozat criterion therefore gives a modular function on \(X_0(M)\).  Since eta has no zeros in \(\HH\), its zeros and poles occur only at cusps.  Thus the function is a modular unit.
\end{proof}

Since \(M\) is squarefree, every divisor \(Q\mid M\) is exact, so an Atkin--Lehner involution \(W_Q\) exists for every \(Q\mid M\).

\begin{lemma}\label{lem:AL-cusp-action}
Let \(Q\mid M\).  Choose an Atkin--Lehner matrix
\begin{equation}\label{eq:AL-matrix}
 W_Q=
 \begin{pmatrix}
 Q\alpha&\beta\\
 M\gamma&Q\delta
 \end{pmatrix},
 \qquad
 Q\alpha\delta-\frac{M}{Q}\beta\gamma=1,
\end{equation}
so that \(\det W_Q=Q\).  For squarefree \(M\), with cusps represented by \(1/c\), \(c\mid M\), the induced action on cusp labels is
\begin{equation}\label{eq:AL-toggle}
 c\longmapsto c\triangle Q,
\end{equation}
where \(c\triangle Q\) denotes the divisor whose support is \(\supp(c)\triangle\supp(Q)\).
\end{lemma}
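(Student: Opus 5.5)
The plan is to apply \(W_Q\) to the representative \(1/c\) and then identify which \(\Gamma_0(M)\)-class the image cusp lies in. The classification we rely on is the standard one for squarefree level: a cusp \(a/b\) with \(\gcd(a,b)=1\) is \(\Gamma_0(M)\)-equivalent to \(1/c\) exactly when \(\gcd(b,M)=c\). So the whole task is to compute the denominator of \(W_Q(1/c)\) and find its gcd with \(M\), one prime at a time.

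First, a direct computation:
\[
 W_Q\Bigl(\frac1c\Bigr)=\frac{Q\alpha+\beta c}{M\gamma+Q\delta c}.
\]
Before taking gcds with \(M\), we must clear the common factors of numerator and denominator. Since \(\det W_Q=Q\), any common prime factor of \(x=Q\alpha+\beta c\) and \(y=M\gamma+Q\delta c\) divides \(Q\).

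Next, fix a prime \(p\mid M\) and split into the four cases according to whether \(p\mid Q\) and whether \(p\mid c\). In each case we use the determinant relation \(Q\alpha\delta-(M/Q)\beta\gamma=1\), which forces the following:
\begin{itemize}
\item for \(p\mid Q\), \(p\nmid \beta\gamma\);
\item for \(p\nmid Q\), \(p\nmid \alpha\delta\).
\end{itemize}
The four cases then go as follows.
\begin{enumerate}
\item If \(p\nmid Q\) and \(p\mid c\), then \(p\mid y\) and \(p\nmid x\), since \(p\nmid Q\alpha\). So \(p\) survives in the reduced denominator.
\item If \(p\nmid Q\) and \(p\nmid c\), then \(y\equiv Q\delta c\not\equiv0 \pmod p\). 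So \(p\) is absent.
\item If \(p\mid Q\) and \(p\mid c\), then \(p\mid x\) and \(p\mid y\). We must compare valuations: \(v_p(y)=1\) exactly, because \(v_p(M\gamma)=1\) while \(v_p(Q\delta c)\ge2\). On the other hand \(v_p(x)\ge1\). Hence after reduction \(p\) does not divide the denominator.
\item If \(p\mid Q\) and \(p\nmid c\), then \(p\nmid x\), since \(p\mid Q\alpha\) but \(p\nmid\beta c\), while \(p\mid y\). So \(p\) survives.
\end{enumerate}

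Combining the four cases, the reduced denominator has \(\gcd\) with \(M\) whose support is \(\supp(c)\triangle\supp(Q)\). This gives \eqref{eq:AL-toggle}.

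The main obstacle is case (iii), the prime \(p\mid\gcd(Q,c)\). Here cancellation occurs between numerator and denominator, so the argument needs exact \(p\)-adic valuations. These use squarefreeness (\(v_p(M)=1\)) and the determinant constraint (\(p\nmid\gamma\)). We should also note that the result is independent of the chosen matrix \(W_Q\), since any two choices differ by an element of \(\Gamma_0(M)\).
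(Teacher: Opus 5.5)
Your proposal is correct and follows essentially the same route as the paper: compute $W_Q(1/c)=(Q\alpha+\beta c)/(M\gamma+Q\delta c)$, use the determinant relation to get $p\nmid\beta\gamma$ for $p\mid Q$ and $p\nmid\alpha\delta$ for $p\nmid Q$, and check the same four local cases. As in the paper, the only cancellation occurs when $p\mid\gcd(Q,c)$, and you handle it via $v_p(M\gamma)=1$. Your explicit use of the squarefree cusp classification by $\gcd(b,M)$, and your remark that common factors of numerator and denominator divide $Q$, state slightly more explicitly what the paper leaves implicit.
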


\begin{proof}
The matrix in \eqref{eq:AL-matrix} sends the cusp \(1/c\) to
\[
 \frac{Q\alpha+\beta c}{M\gamma+Q\delta c}.
\]
For a prime \(p\mid M\), the reduced denominator label contains \(p\) exactly when the denominator has larger \(p\)-adic valuation than the numerator.  We check this locally.

If \(p\mid Q\), then the determinant relation gives \(\beta\gamma\not\equiv0\pmod p\).  If \(p\nmid c\), the numerator is congruent to \(\beta c\not\equiv0\pmod p\), while the denominator is divisible by \(p\); hence \(p\) enters the denominator label.  If \(p\mid c\), both numerator and denominator are divisible by \(p\), but after division by \(p\) the denominator is congruent to \((M/p)\gamma\not\equiv0\pmod p\); hence \(p\) is removed from the label.

If \(p\nmid Q\), the determinant relation gives \(\alpha\delta\not\equiv0\pmod p\).  If \(p\mid c\), the numerator is congruent to \(Q\alpha\not\equiv0\pmod p\) and the denominator is divisible by \(p\), so \(p\) remains in the label.  If \(p\nmid c\), the denominator is congruent to \(Q\delta c\not\equiv0\pmod p\), so \(p\) remains absent.  Thus exactly the primes dividing \(Q\) are toggled.
\end{proof}

\begin{proposition}\label{prop:atkin-lehner-divisor}
Let \(Q\mid M\).  Under the Atkin--Lehner involution \(W_Q\) normalized by \eqref{eq:AL-matrix},
\begin{equation}\label{eq:div-atkin-lehner}
 W_Q^*\divisor\bigl((R_T^{(M)})^{24}\bigr)
 =\chi_T(Q)\,
 \divisor\bigl((R_T^{(M)})^{24}\bigr).
\end{equation}
Hence there is a non-zero constant \(A_{T,Q}\) such that
\begin{equation}\label{eq:atkin-lehner-function}
 (R_T^{(M)})^{24}\bigm| W_Q
 =A_{T,Q}\,(R_T^{(M)})^{24\chi_T(Q)}.
\end{equation}
\end{proposition}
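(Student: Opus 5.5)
The plan has two steps: first transport the divisor formula of Theorem \ref{thm:cusp-diagonalization} along $W_Q$, then deduce the functional identity \eqref{eq:atkin-lehner-function} from equality of divisors.

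\textbf{Step 1: the divisor identity.} Write $f=(R_T^{(M)})^{24}$, which is a modular unit on $X_0(M)$ by Theorem \ref{thm:main-structural}(i). The Atkin--Lehner matrix $W_Q$ normalizes $\Gamma_0(M)$, so $f\mid W_Q$ is again a modular function on $\Gamma_0(M)$. It has no zeros or poles in $\HH$, so it is a modular unit. Its divisor is the pushforward of $\divisor(f)$ under the automorphism of $X_0(M)$ induced by $W_Q$. By Lemma \ref{lem:AL-cusp-action}, this automorphism permutes the cusps by $[1/c]\mapsto[1/(c\triangle Q)]$. Since $W_Q$ is an involution on $X_0(M)$, pullback and pushforward agree.

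Next I need the orders to transport exactly. I plan to argue that for squarefree $M$ the cusps $1/c$ and $1/(c\triangle Q)$ need not have equal widths, but orders on the compact curve $X_0(M)$ are intrinsic. The coefficients in \eqref{eq:div-RT24} are orders in the width-normalized local parameter, which is a genuine uniformizer on $X_0(M)$, because every cusp of $\Gamma_0(M)$ is regular at squarefree level. So the divisor statement is intrinsic and an automorphism simply relabels it. I expect this to be the main point needing care: one must be sure that the coefficients in \eqref{eq:div-RT24} are the intrinsic orders on $X_0(M)$ and not widths times something. If necessary, I would check this directly with \eqref{eq:ligozat-squarefree}, using that the matrix $A_M$ is symmetric and has the tensor structure of Theorem \ref{thm:master-diagonalization}. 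Under $c\mapsto c\triangle Q$ the local factor of $A_M$ at each $p\mid Q$ swaps its rows, and that is consistent with the eigenvector computation.

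Applying Theorem \ref{thm:cusp-diagonalization}, the pulled-back divisor is
\[
\Lambda_T^{(M)}\sum_{c\mid M}\chi_T(c\triangle Q)[1/c].
\]
Since $\supp(c\triangle Q)$ and $\supp(c)\cup\supp(Q)$ differ by removing $\supp(c)\cap\supp(Q)$ twice, and $\chi_T$ is multiplicative on the Boolean group of squarefree divisors, we have $\chi_T(c\triangle Q)=\chi_T(c)\chi_T(Q)$. This gives \eqref{eq:div-atkin-lehner}.

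\textbf{Step 2: the functional identity.} The function $f^{\chi_T(Q)}$ is a modular unit with divisor $\chi_T(Q)\divisor(f)$. Therefore the ratio $(f\mid W_Q)/f^{\chi_T(Q)}$ is a modular function on the compact curve $X_0(M)$ with zero divisor. Hence it is a nonzero constant $A_{T,Q}$, which proves \eqref{eq:atkin-lehner-function}.
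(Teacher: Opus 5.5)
Your proposal is correct and follows the paper's proof. You relabel cusps by $c\mapsto c\triangle Q$ via Lemma~\ref{lem:AL-cusp-action}, apply $\chi_T(c\triangle Q)=\chi_T(c)\chi_T(Q)$ to the coefficients of Theorem~\ref{thm:cusp-diagonalization}, and conclude that two modular units with the same divisor on the compact curve $X_0(M)$ differ by a nonzero constant. Your extra care about intrinsic orders, and about pullback versus pushforward under an involution, is left implicit in the paper; note that cusp regularity holds for every $\Gamma_0(M)$ because $-I\in\Gamma_0(M)$, not specifically because $M$ is squarefree.
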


\begin{proof}
Lemma \ref{lem:AL-cusp-action} gives the action on cusp labels.  Using Theorem \ref{thm:cusp-diagonalization}, the coefficient of the pulled-back divisor at the cusp labelled by \(c\) is
\[
 \Lambda_T^{(M)}\chi_T(c\triangle Q)
 =
 \Lambda_T^{(M)}\chi_T(c)\chi_T(Q).
\]
This proves \eqref{eq:div-atkin-lehner}.  If \(\chi_T(Q)=1\), the modular units \((R_T^{(M)})^{24}|W_Q\) and \((R_T^{(M)})^{24}\) have the same divisor.  If \(\chi_T(Q)=-1\), then \((R_T^{(M)})^{24}|W_Q\) and \((R_T^{(M)})^{-24}\) have the same divisor.  On the compact curve \(X_0(M)\), two non-zero modular functions with the same divisor differ by a non-zero constant, giving \eqref{eq:atkin-lehner-function}.
\end{proof}

\begin{remark}\label{rem:AL-constants}
The constants \(A_{T,Q}\) in \eqref{eq:atkin-lehner-function} depend on the chosen representative of \(W_Q\).  All divisor-class and Eisenstein statements are naturally statements in \(\CC(X_0(M))^\times/\CC^\times\), where these constants disappear.  For the full Fricke involution \(Q=M\), Theorem \ref{thm:walsh-fricke} gives the explicit constant
\[
 A_{T,M}=\bigl(\mathfrak c_T^{(M)}\bigr)^{24}.
\]
\end{remark}

The previous theorem gives an integral lattice statement.  Let \(\nu=2^k\) be the number of cusps and let
\[
 L_0=\left\{(a_c)_{c\mid M}\in\ZZ^\nu:\sum_{c\mid M}a_c=0\right\}
\]
be the formal degree-zero cusp-divisor lattice.  Let \(L_\Delta\) be the image in \(L_0\) of the divisor map applied to all \(\Delta\)-eta units
\[
 \prod_{d\mid M}\Delta(d\tau)^{r_d},
 \qquad
 r_d\in\ZZ,\qquad \sum_{d\mid M}r_d=0.
\]
Thus \(L_\Delta=A_M(L_0)\), where \(A_M\) is the matrix \eqref{eq:A-M-def}.  Let
\[
 L_{\mathrm W}=
 \left\langle\divisor\bigl((R_T^{(M)})^{24}\bigr):
 \varnothing\ne T\subseteq\calP_M\right\rangle\subseteq L_\Delta.
\]
This is the Walsh sublattice generated by the distinguished modular units \((R_T^{(M)})^{24}\).  It should not be confused with the divisor lattice of all modular units on \(X_0(M)\).

\begin{theorem}\label{thm:delta-walsh-lattices}
With the notation above, the following are indices of explicit principal cuspidal divisor sublattices inside the formal degree-zero cusp-divisor lattice.  One has
\begin{align}
 [L_0:L_\Delta]&=\prod_{\varnothing\ne T\subseteq\calP_M}\Lambda_T^{(M)},\label{eq:L0-LDelta}\\
 [L_\Delta:L_{\mathrm W}]&=\nu^{(\nu-2)/2},\label{eq:LDelta-LW}\\
 [L_0:L_{\mathrm W}]&=\nu^{(\nu-2)/2}
 \prod_{\varnothing\ne T\subseteq\calP_M}\Lambda_T^{(M)}.\label{eq:L0-LW}
\end{align}
\end{theorem}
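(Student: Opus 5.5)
The plan is to work entirely inside \(\ZZ^\nu\), indexed by divisors \(c\mid M\), and to read off all three indices from determinants. The two inputs are Theorem \ref{thm:master-diagonalization}, which gives \(A_Mv_T=\Lambda_T^{(M)}v_T\), and the orthogonality relations \eqref{eq:walsh-orthogonality}. Throughout, \(v_\varnothing=(1,\dots,1)\) and \(L_0=v_\varnothing^{\perp}\cap\ZZ^\nu\).

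\textbf{Step 1: the index \([L_0:L_\Delta]\).} The matrix \(A_M\) is symmetric and integral, and \(v_\varnothing\) is one of its eigenvectors. Hence \(A_M\) preserves the hyperplane \(v_\varnothing^\perp\), and therefore maps \(L_0\) into itself. By \eqref{eq:walsh-orthogonality}, the vectors \(v_T\) with \(T\ne\varnothing\) form a \(\QQ\)-basis of \(L_0\otimes\QQ\). So the restriction \(A_M|_{L_0}\) is diagonalizable over \(\QQ\) with eigenvalues \(\Lambda_T^{(M)}\), \(T\ne\varnothing\). All of these are positive, so the restriction is injective.

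For an injective endomorphism \(\phi\) of a lattice \(L\), one has \([L:\phi(L)]=|\det\phi|\). This determinant may be computed in any \(\QQ\)-basis, in particular the Walsh basis. Since \(L_\Delta=A_M(L_0)\), this gives \eqref{eq:L0-LDelta}.

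\textbf{Step 2: the index \([L_\Delta:L_{\mathrm W}]\).} By Theorem \ref{thm:cusp-diagonalization}, \(L_{\mathrm W}\) is generated by the vectors \(\Lambda_T^{(M)}v_T=A_Mv_T\) with \(T\ne\varnothing\). Let \(W=\langle v_T:T\ne\varnothing\rangle_\ZZ\subseteq L_0\). Then \(L_{\mathrm W}=A_M(W)\). Because \(A_M\) is injective on \(L_0\otimes\QQ\),
\[
[L_\Delta:L_{\mathrm W}]=[A_ML_0:A_MW]=[L_0:W].
\]
So the remaining task is a purely combinatorial index computation.

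To compute \([L_0:W]\), consider the full Walsh--Hadamard lattice \(H=\langle v_T:T\subseteq\calP_M\rangle\). By orthogonality \(H=W\oplus\ZZ v_\varnothing\). The Hadamard matrix with columns \(v_T\) satisfies \(H^{\mathsf T}H=\nu I\), so its determinant is \(\pm\nu^{\nu/2}\), and hence \([\ZZ^\nu:H]=\nu^{\nu/2}\).

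Next compare \(H\) with the intermediate lattice \(L_0\oplus\ZZ v_\varnothing\). The coordinate-sum map \(x\mapsto\sum_cx_c \bmod \nu\) is surjective from \(\ZZ^\nu\) onto \(\ZZ/\nu\ZZ\). Its kernel is exactly \(L_0\oplus\ZZ v_\varnothing\): if \(\sum x_c=m\nu\), then \(x-mv_\varnothing\in L_0\). Hence \([\ZZ^\nu:L_0\oplus\ZZ v_\varnothing]=\nu\). Multiplicativity of indices along the chain \(H\subseteq L_0\oplus\ZZ v_\varnothing\subseteq\ZZ^\nu\), together with \([L_0\oplus\ZZ v_\varnothing:W\oplus\ZZ v_\varnothing]=[L_0:W]\), gives
\[
\nu^{\nu/2}=\nu\,[L_0:W],
\]
so \([L_0:W]=\nu^{(\nu-2)/2}\). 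This proves \eqref{eq:LDelta-LW}.

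\textbf{Step 3 and the main obstacle.} Formula \eqref{eq:L0-LW} follows from \eqref{eq:L0-LDelta} and \eqref{eq:LDelta-LW} by multiplicativity of indices along \(L_{\mathrm W}\subseteq L_\Delta\subseteq L_0\). The step needing the most care is Step 2. One must not conflate the Walsh vectors with a \(\ZZ\)-basis of \(L_0\). The correction factor \(\nu\) comes precisely from the non-split extension of \(L_0\oplus\ZZ v_\varnothing\) inside \(\ZZ^\nu\), and this is what turns \(\nu^{\nu/2}\) into \(\nu^{(\nu-2)/2}\).
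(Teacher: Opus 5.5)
Your proof is correct and follows essentially the same route as the paper. You compute $[L_0:L_\Delta]$ as the determinant of $A_M$ restricted to $L_0$ in the Walsh eigenbasis. You use $L_{\mathrm W}=A_M(W)$, with $W$ the paper's $L_{\mathrm{Wh}}$, so that the injective map $A_M$ preserves the index. You obtain $[L_0:W]=\nu^{\nu/2}/\nu$ from the Hadamard determinant and the index-$\nu$ sublattice $L_0\oplus\ZZ v_\varnothing$. You also spell out a few steps the paper leaves implicit, such as the positivity of the $\Lambda_T^{(M)}$ and the containment $H\subseteq L_0\oplus\ZZ v_\varnothing$.
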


\begin{proof}
After choosing the integral cusp-label basis \(([1/c])_{c\mid M}\) of the formal cusp lattice, the matrix \(A_M\) maps exponent vectors \((r_d)_{d\mid M}\) to the cusp-order vector of \(\prod_{d\mid M}\Delta(d\tau)^{r_d}\).  The column sums of \(A_M\) are independent of \(d\), hence \(A_M\) maps \(L_0\) into \(L_0\).  The determinant calculation is made on the rational vector space \(L_0\otimes\QQ\).  By Theorem \ref{thm:master-diagonalization}, the restriction of \(A_M\) to \(L_0\otimes\CC\) has eigenbasis \(v_T\), \(T\ne\varnothing\), with eigenvalues \(\Lambda_T^{(M)}\).  Therefore
\[
 [L_0:A_M(L_0)]=
 \left|\det(A_M|_{L_0\otimes\QQ})\right|
 =\prod_{\varnothing\ne T\subseteq\calP_M}\Lambda_T^{(M)},
\]
which proves \eqref{eq:L0-LDelta}.

Let \(L_{\mathrm{Wh}}\) be the lattice in \(L_0\) generated by the non-trivial Walsh vectors \(v_T\), \(T\ne\varnothing\).  The full Sylvester-Walsh matrix \(W=(\chi_T(c))_{T,c}\) has determinant of absolute value \(\nu^{\nu/2}\).  The lattice generated by all rows of \(W\) has index \(\nu^{\nu/2}\) in \(\ZZ^\nu\).  The lattice \(\ZZ v_\varnothing+L_0\) has index \(\nu\) in \(\ZZ^\nu\), since its elements are precisely the integer vectors whose coordinate sum is divisible by \(\nu\).  As \(\ZZ v_\varnothing\cap L_0=0\), it follows that
\[
 [L_0:L_{\mathrm{Wh}}]=\nu^{\nu/2}/\nu=\nu^{(\nu-2)/2}.
\]
By \eqref{eq:div-RT24}, the Walsh modular-unit lattice satisfies
\[
 L_{\mathrm W}=A_M(L_{\mathrm{Wh}}).
\]
Since \(A_M\) is invertible over \(\QQ\), indices are preserved under applying \(A_M\) to the pair \(L_{\mathrm{Wh}}\subseteq L_0\).  Hence
\[
 [L_\Delta:L_{\mathrm W}]
 =[A_M(L_0):A_M(L_{\mathrm{Wh}})]
 =[L_0:L_{\mathrm{Wh}}]
 =\nu^{(\nu-2)/2},
\]
which proves \eqref{eq:LDelta-LW}.  Multiplying \eqref{eq:L0-LDelta} and \eqref{eq:LDelta-LW} gives \eqref{eq:L0-LW}.
\end{proof}

\begin{remark}
Theorem \ref{thm:delta-walsh-lattices} concerns the formal degree-zero cusp-divisor lattice and the principal cuspidal divisors produced by explicit \(\Delta\)-eta units and Walsh units.  It does not compute the quotient of degree-zero cuspidal divisors by all principal cuspidal divisors, nor the cuspidal divisor class group of \(X_0(M)\).  The general framework of modular units and cuspidal divisors is developed in Kubert--Lang \cite{KubertLang}; the present result identifies two explicit principal sublattices inside the formal degree-zero cusp lattice.
\end{remark}

\begin{example}\label{ex:M6}
For \(M=6\), the non-trivial Walsh units are
\[
 R_{\{2\}}^{(6)}=\frac{\eta(\tau)\eta(3\tau)}{\eta(2\tau)\eta(6\tau)},\quad
 R_{\{3\}}^{(6)}=\frac{\eta(\tau)\eta(2\tau)}{\eta(3\tau)\eta(6\tau)},\quad
 R_{\{2,3\}}^{(6)}=\frac{\eta(\tau)\eta(6\tau)}{\eta(2\tau)\eta(3\tau)}.
\]
The cusp labels are \(1,2,3,6\).  The orders of the twenty-fourth powers are
\[
\begin{array}{c|rrrr}
T & 1&2&3&6\\ \hline
\{2\} & 4&-4&4&-4\\
\{3\} & 6&6&-6&-6\\
\{2,3\} & 2&-2&-2&2
\end{array}
\]
The Fricke laws are
\[
 R_{\{2\}}^{(6)}\left(-\frac1{6\tau}\right)=2(R_{\{2\}}^{(6)}(\tau))^{-1},
 \quad
 R_{\{3\}}^{(6)}\left(-\frac1{6\tau}\right)=3(R_{\{3\}}^{(6)}(\tau))^{-1},
\]
and
\[
 R_{\{2,3\}}^{(6)}\left(-\frac1{6\tau}\right)=R_{\{2,3\}}^{(6)}(\tau).
\]
This small model displays the same Boolean diagonalization as the full Heegner level, without the large constants.
\end{example}

\section{Boolean Eisenstein bases}\label{sec:eisenstein}

For \(T\ne\varnothing\), define
\begin{equation}\label{eq:E-T-def}
 \calE_T^{(M)}(\tau)=D\log R_T^{(M)}(\tau).
\end{equation}
Since
\[
 D\log\eta(d\tau)=\frac{d}{24}E_2(d\tau),
\]
we have
\begin{equation}\label{eq:E-T-E2}
 \calE_T^{(M)}(\tau)=
 \frac1{24}\sum_{d\mid M}\chi_T(d)dE_2(d\tau).
\end{equation}
Although \(E_2\) is quasimodular, the weight-zero condition \(
\sum_{d\mid M}\chi_T(d)=0\) cancels the anomaly.

\begin{proposition}\label{prop:E-holomorphic}
For each nonempty \(T\subseteq\calP_M\), the form \(\calE_T^{(M)}\) belongs to the Eisenstein subspace of \(M_2(\Gamma_0(M))\).
\end{proposition}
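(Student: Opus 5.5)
The plan is to realize \(\calE_T^{(M)}\) as a rational linear combination of the standard forms \(E_2(\tau)-dE_2(d\tau)\), each of which is a holomorphic weight-two Eisenstein series on \(\Gamma_0(d)\subseteq\Gamma_0(M)\). Starting from \eqref{eq:E-T-E2} and using \(\sum_{d\mid M}\chi_T(d)=0\) for \(T\ne\varnothing\), I rewrite
\[
 \calE_T^{(M)}(\tau)=\frac1{24}\sum_{d\mid M}\chi_T(d)\,dE_2(d\tau)
 =-\frac1{24}\sum_{d\mid M}\chi_T(d)\bigl(E_2(\tau)-dE_2(d\tau)\bigr),
\]
since the added multiple of \(E_2(\tau)\) has total coefficient \(\sum_d\chi_T(d)=0\). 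This is exactly the weight-zero cancellation of the anomaly mentioned before the proposition, made explicit.

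Next I check that each \(E_2(\tau)-dE_2(d\tau)\) lies in \(M_2(\Gamma_0(d))\). I would use the transformation law
\[
 E_2\!\left(\frac{a\tau+b}{c\tau+d'}\right)=(c\tau+d')^2E_2(\tau)+\frac{6c(c\tau+d')}{\pi i}.
\]
For \(\gamma=\begin{pmatrix}a&b\\c&d'\end{pmatrix}\in\Gamma_0(d)\), the point \(d\gamma\tau\) equals \(\gamma'(d\tau)\) with \(\gamma'=\begin{pmatrix}a&db\\c/d&d'\end{pmatrix}\in\mathrm{SL}_2(\ZZ)\). The anomalous term for \(dE_2(d\tau)\) is therefore \(d\cdot 6(c/d)(c\tau+d')/(\pi i)\), which matches that of \(E_2(\tau)\), so the difference transforms with weight two. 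Holomorphy on \(\HH\) is clear. Holomorphy at the cusps follows because \(E_2\) has at most the nonholomorphic correction \(3/(\pi y)\) near every cusp after slashing, and these corrections cancel in the combination, leaving bounded \(q\)-expansions. Alternatively, one can note that \(D\log\) of the modular unit \((R_T^{(M)})^{24}\) is, by the standard argument, a weight-two form whose expansion at each cusp \(1/c\) is holomorphic with constant term proportional to \(\ord_{1/c}\), given by Theorem \ref{thm:cusp-diagonalization}. I would present this second route as a cross-check, since it already yields holomorphy at all cusps.

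Finally, I argue that the result lies in the Eisenstein subspace rather than merely in \(M_2\). Each \(E_2(\tau)-dE_2(d\tau)\) is a linear combination of the forms \(E_2(\tau)-tE_2(t\tau)\) for \(t\mid M\), which are the standard spanning set of the Eisenstein space for trivial character, as in \cite{DiamondShurman}. Equivalently, its Petersson inner product with every cusp form vanishes, by the Rankin unfolding for \(E_2\)-type series. The main obstacle is precisely this identification. I would simply cite the standard description of \(\mathcal E_2(\Gamma_0(M))\) as spanned by \(E_2(\tau)-tE_2(t\tau)\), \(1<t\mid M\), which avoids any cusp-by-cusp computation. The cusp behaviour is the only step where care with widths is needed, and the difference form handles it uniformly.
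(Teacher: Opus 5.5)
Your proposal is correct and is essentially the paper's proof: both use $\sum_{d\mid M}\chi_T(d)=0$ to write $\calE_T^{(M)}=-\frac1{24}\sum_{d\mid M}\chi_T(d)\bigl(E_2(\tau)-dE_2(d\tau)\bigr)$, then invoke the standard fact that each $E_2(\tau)-dE_2(d\tau)$ is a holomorphic weight-two Eisenstein series on $\Gamma_0(d)$, hence on $\Gamma_0(M)$; your anomaly and cusp checks just fill in what the paper cites. Two small corrections: the containment is $\Gamma_0(M)\subseteq\Gamma_0(d)$, not the reverse as you wrote, and only membership in the Eisenstein subspace is needed, since your claim that these forms span it holds here because $M$ is squarefree but fails at general level.
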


\begin{proof}
For every divisor \(d>1\) of \(M\), the form
\[
 \Phi_d(\tau)=E_2(\tau)-dE_2(d\tau)
\]
is a holomorphic weight-two Eisenstein series on \(\Gamma_0(d)\), hence on \(\Gamma_0(M)\).  This is the standard cancellation of the quasimodular term in the transformation law of \(E_2\).  Since \(T\ne\varnothing\), one has \(\sum_{d\mid M}\chi_T(d)=0\), and therefore
\[
 \calE_T^{(M)}(\tau)
 =-\frac1{24}\sum_{\substack{d\mid M\\ d>1}}
 \chi_T(d)\bigl(E_2(\tau)-dE_2(d\tau)\bigr).
\]
Thus \(\calE_T^{(M)}\) is a holomorphic weight-two Eisenstein series on \(\Gamma_0(M)\).
\end{proof}

Let
\[
 V_0(M)=\left\{(r_d)_{d\mid M}\in\CC^{2^k}:\sum_{d\mid M}r_d=0\right\}.
\]
Define
\[
 \Phi_M:V_0(M)\longrightarrow M_2(\Gamma_0(M)),
 \qquad
 \Phi_M((r_d))=\frac1{24}\sum_{d\mid M}r_ddE_2(d\tau).
\]
The preceding proposition proves that \(\Phi_M(V_0(M))\) is contained in the Eisenstein subspace.

\begin{theorem}\label{thm:eisenstein-isomorphism}
The map \(\Phi_M\) is an isomorphism from \(V_0(M)\) onto the Eisenstein subspace of \(M_2(\Gamma_0(M))\).  If
\[
 \Const_M:M_2(\Gamma_0(M))_{\mathrm{Eis}}\longrightarrow V_0(M)
\]
denotes the vector of width-normalized constant terms at the cusps \(1/c\), then
\begin{equation}\label{eq:Const-Phi}
 \Const_M\circ\Phi_M=\frac1{24}A_M
\end{equation}
on \(V_0(M)\).
\end{theorem}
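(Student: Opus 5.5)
Since \(\Phi_M(V_0(M))\) already lies in the Eisenstein subspace by Proposition \ref{prop:E-holomorphic}, it suffices to prove the constant-term identity \eqref{eq:Const-Phi} and then count dimensions. The identity comes first. For the constant term of \(\Phi_M((r_d))\) at the cusp \(1/c\), we use \(D\log\) of eta-quotients: on \(V_0(M)\), \(\Phi_M((r_d))=D\log f\) with \(f=\prod_d\eta(d\tau)^{r_d}\). Formally this makes sense for complex \(r_d\), and by linearity it suffices to treat integer vectors. Take \(f^{24}=\prod_d\Delta(d\tau)^{r_d}\), a genuine modular unit. If \(\sigma\) is a scaling matrix for \(1/c\) with width \(h=M/c\), then \(D\log(f^{24}\circ\sigma)\) at \(\infty\) has constant term equal to \(\ord\) measured in \(q^{1/h}\), divided by \(h\), by the chain rule. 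The weight-two slash of a logarithmic derivative of a weight-zero function equals the logarithmic derivative of the slashed function, and for weight zero the automorphy factors cancel. With the width-normalized convention (constant term read in the local parameter \(q_h\), with derivative \(q_h\,d/dq_h\)), the constant term at \(1/c\) is therefore \(\ord_{1/c}f=\frac{1}{24}\sum_d r_d a_{c,d}\) by Ligozat's formula \eqref{eq:ligozat-squarefree}. This gives \(\Const_M\circ\Phi_M=\frac1{24}A_M\). We also check that \(\Const_M\) lands in \(V_0(M)\): the constant-term vector of \(D\log f\) is the divisor vector, which has degree zero because \(\sum_c\ord_{1/c}f^{24}\) equals the total number of zeros of a unit; alternatively, \(A_M\) preserves \(V_0(M)\) since its column sums are constant.

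Injectivity follows from \eqref{eq:Const-Phi}. By Theorem \ref{thm:master-diagonalization}, \(A_M\) restricted to \(V_0(M)\) is diagonal in the basis \(v_T\) (\(T\ne\varnothing\)) with eigenvalues \(\Lambda_T^{(M)}\). Each \(\Lambda_T^{(M)}\) is nonzero, because every factor \(p\pm1\) is at least \(1\). Hence \(\frac1{24}A_M\) is invertible on \(V_0(M)\), so \(\Phi_M\) is injective and \(\dim\Phi_M(V_0(M))=2^k-1\).

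For surjectivity, invoke the standard fact that the Eisenstein subspace of \(M_2(\Gamma_0(M))\) has dimension equal to the number of cusps minus one, namely \(2^k-1\) for squarefree \(M\) (see \cite{DiamondShurman}). Comparing dimensions then shows that \(\Phi_M\) is onto. As a by-product, \(\Const_M\) restricted to the Eisenstein space is an isomorphism onto \(V_0(M)\). This is consistent with the fact that a form in the Eisenstein space with all constant terms zero is a cusp form, hence zero.

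The main obstacle is the normalization bookkeeping in the first step. One must verify that the chain rule under the scaling matrix, combined with the width-\(M/c\) local parameter, gives exactly \(\ord_{1/c}\) with no extra factor of \(h\) or \(\det\), and that the eta multiplier does not disturb the logarithmic derivative. To handle both, I would work with \(f^{24}\) and divide by \(24\), computing directly from the expansion \(f^{24}\circ\sigma=C\,q_h^{\,m}(1+O(q_h))\).
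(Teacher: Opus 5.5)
Your proposal is correct and follows the paper's proof: Ligozat's formula gives \(\Const_M\circ\Phi_M=\frac1{24}A_M\) on integral vectors, linearity extends it to \(V_0(M)\), the Walsh diagonalization makes \(A_M\) invertible on \(V_0(M)\) so \(\Phi_M\) is injective, and the count \(\dim = \#\text{cusps}-1 = 2^k-1\) gives surjectivity. Your extra bookkeeping, namely working with \(f^{24}\) to avoid the multiplier, tracking the width factor, and checking that the constant-term vector has degree zero, makes explicit normalizations that the paper leaves implicit in the phrase ``width-normalized.''
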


\begin{proof}
We first prove the constant-term formula for integral exponent vectors \(r=(r_d)\in V_0(M)\cap\ZZ^{2^k}\).  For such \(r\), the eta-quotient \(\prod_{d\mid M}\eta(d\tau)^{r_d}\) has weight zero, and the Ligozat formula \eqref{eq:ligozat-squarefree} gives the width-normalized constant term of its logarithmic derivative at \(1/c\) as
\[
 \frac1{24}\sum_{d\mid M}r_d\frac{M\gcd(c,d)^2}{cd}.
\]
This is the \(c\)-th coordinate of \(A_Mr/24\).  Since both sides are complex-linear in \(r\), the same formula holds for all \(r\in V_0(M)\); no complex powers of eta-functions are involved in this extension.  This proves \eqref{eq:Const-Phi}.

By Theorem \ref{thm:master-diagonalization}, \(A_M\) is invertible on \(V_0(M)\).  Hence \(\Phi_M\) is injective.  The dimension of the Eisenstein subspace of \(M_2(\Gamma_0(M))\) is the number of cusps minus one, namely \(2^k-1=\dim V_0(M)\); this standard weight-two dimension formula is recalled, for example, in \cite[Chapter 4]{DiamondShurman} and \cite[Chapter 2]{Miyake}.  Thus \(\Phi_M\) is an isomorphism.
\end{proof}

From \eqref{eq:E-T-E2} and \(E_2(\tau)=1-24\sum_{n\ge1}\sigma_1(n)q^n\), we obtain
\begin{equation}\label{eq:E-fourier}
 \calE_T^{(M)}(\tau)=\kappa_T^{(M)}-
 \sum_{n\ge1}\mathfrak a_T^{(M)}(n)q^n,
\end{equation}
where \(\kappa_T^{(M)}\) is defined in \eqref{eq:kappa-def} and
\begin{equation}\label{eq:a-T-coeff}
 \mathfrak a_T^{(M)}(n)=
 \sum_{\substack{d\mid M\\ d\mid n}}\chi_T(d)d\,\sigma_1(n/d).
\end{equation}

\begin{theorem}\label{thm:Eisenstein-basis}
The forms
\[
 \calE_T^{(M)}(\tau),
 \qquad \varnothing\ne T\subseteq\calP_M,
\]
form a basis of the Eisenstein subspace of \(M_2(\Gamma_0(M))\).
\end{theorem}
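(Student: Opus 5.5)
The plan is to deduce the statement directly from Theorem \ref{thm:eisenstein-isomorphism}. By \eqref{eq:E-T-E2}, one has
\[
 \calE_T^{(M)}=\Phi_M(v_T),
\]
where \(v_T=(\chi_T(d))_{d\mid M}\) is the Walsh vector. For \(T\ne\varnothing\) we have \(\sum_{d\mid M}\chi_T(d)=0\), so \(v_T\in V_0(M)\) and the expression makes sense.

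The first step is to show that the \(2^k-1\) vectors \(v_T\), \(\varnothing\ne T\subseteq\calP_M\), form a basis of \(V_0(M)\). The orthogonality relations \eqref{eq:walsh-orthogonality} show that all \(2^k\) vectors \(v_T\) are mutually orthogonal and nonzero, hence linearly independent in \(\CC^{2^k}\). The \(2^k-1\) nontrivial ones lie in the hyperplane \(V_0(M)\), whose dimension is \(2^k-1\). They therefore form a basis of \(V_0(M)\).

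The second step transports this basis through \(\Phi_M\). Theorem \ref{thm:eisenstein-isomorphism} says that \(\Phi_M\) is a linear isomorphism from \(V_0(M)\) onto the Eisenstein subspace. The images \(\Phi_M(v_T)=\calE_T^{(M)}\) are therefore a basis of that subspace.

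As a complement, the diagonalization claimed in the structural theorem follows from \eqref{eq:Const-Phi} together with \eqref{eq:A-eigen}:
\[
 \Const_M\calE_T^{(M)}=\tfrac1{24}\Lambda_T^{(M)}v_T .
\]
Each \(\Lambda_T^{(M)}\) is nonzero, since it is a product of factors \(p\pm1\ge1\). This gives an independent check of linear independence that avoids the injectivity part of Theorem \ref{thm:eisenstein-isomorphism}. It still needs the dimension count \(2^k-1\) for spanning.

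I expect no real obstacle here. The only substantive input is the dimension of the Eisenstein subspace, which is already supplied in the proof of Theorem \ref{thm:eisenstein-isomorphism}. The one point to verify carefully is that the \(v_T\) lie in the domain \(V_0(M)\) on which \(\Phi_M\) and \(\Const_M\) are defined.
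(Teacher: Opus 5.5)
Your proposal is correct and matches the paper's proof: the paper also observes that the Walsh vectors $\chi_T$, $T\ne\varnothing$, form a basis of $V_0(M)$ by orthogonality, notes $\calE_T^{(M)}=\Phi_M(\chi_T)$, and concludes from Theorem~\ref{thm:eisenstein-isomorphism}. Your constant-term argument via \eqref{eq:Const-Phi} and \eqref{eq:A-eigen} is a valid optional cross-check of linear independence.
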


\begin{proof}
The vectors \(\chi_T=(\chi_T(d))_{d\mid M}\), \(T\ne\varnothing\), form a basis of \(V_0(M)\) by Walsh orthogonality.  Since
\[
 \calE_T^{(M)}=\Phi_M(\chi_T),
\]
the assertion follows immediately from Theorem \ref{thm:eisenstein-isomorphism}.
\end{proof}

\begin{corollary}\label{cor:inverse-walsh-eisenstein}
For \(r=(r_d)\in V_0(M)\), put
\[
 \widehat r(T)=\sum_{d\mid M}r_d\chi_T(d),
 \qquad T\subseteq\calP_M.
\]
Then
\begin{equation}\label{eq:inverse-walsh-eisenstein}
 \Phi_M(r)=2^{-k}\sum_{\varnothing\ne T\subseteq\calP_M}\widehat r(T)\,\calE_T^{(M)}.
\end{equation}
In particular, the Walsh Eisenstein basis is the finite Fourier transform of the natural divisor basis of \(V_0(M)\). This also gives a compact comparison with the standard divisor basis \(E_2(\tau)-dE_2(d\tau)\), \(d\mid M\), \(d>1\).
\end{corollary}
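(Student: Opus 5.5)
The plan is to expand $\Phi_M(r)$ in the Walsh basis by finite Fourier inversion on the divisor cube, and then use linearity of $\Phi_M$ together with $\calE_T^{(M)}=\Phi_M(\chi_T)$.

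First, the orthogonality relations \eqref{eq:walsh-orthogonality}, read in the other index, give the dual relation
\[
 \sum_{T\subseteq\calP_M}\chi_T(d)\chi_T(e)=2^k\,\delta_{d,e},\qquad d,e\mid M.
\]
This holds because the $2^k\times 2^k$ Sylvester matrix $W=(\chi_T(d))$ satisfies $WW^{\mathsf T}=2^kI$. It follows that $W^{\mathsf T}W=2^kI$, since $W$ is square and invertible. Alternatively, factor the sum prime by prime: the local sum is $1+(-1)^{\epsilon_p(d)+\epsilon_p(e)}$, which vanishes unless the local exponents agree.

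Hence every $r\in\CC^{2^k}$ satisfies the inversion formula
\[
 r_d=2^{-k}\sum_{T\subseteq\calP_M}\widehat r(T)\chi_T(d).
\]

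Second, I use the hypothesis $r\in V_0(M)$. The $T=\varnothing$ term has coefficient $\widehat r(\varnothing)=\sum_d r_d=0$, so
\[
 r=2^{-k}\sum_{T\ne\varnothing}\widehat r(T)\chi_T .
\]
This is a decomposition inside $V_0(M)$, since each $\chi_T$ with $T\neq\varnothing$ has coordinate sum zero.

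Third, I apply the $\CC$-linear map $\Phi_M$ to this decomposition and use \eqref{eq:E-T-def} and \eqref{eq:E-T-E2}, which identify $\Phi_M(\chi_T)$ with $\calE_T^{(M)}$. This gives \eqref{eq:inverse-walsh-eisenstein} directly.

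For the comparison remark, take $r=e_1-e_d$ with $d>1$, so that $\Phi_M(r)=\tfrac1{24}\bigl(E_2(\tau)-dE_2(d\tau)\bigr)$. Its Walsh transform is $\widehat r(T)=1-\chi_T(d)$, which equals $2$ when $|T\cap\supp(d)|$ is odd and $0$ otherwise. This yields
\[
 E_2(\tau)-dE_2(d\tau)=\frac{48}{2^k}\sum_{|T\cap\supp d|\ \mathrm{odd}}\calE_T^{(M)}.
\]
I would record this identity as the explicit comparison with the standard divisor basis.

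No step is a real obstacle. The only point needing care is the dual orthogonality relation, namely summing over $T$ rather than over $d$; I would justify it by the prime-by-prime factorization, which is the same tensor structure used in Theorem \ref{thm:master-diagonalization}.
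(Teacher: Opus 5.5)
Your proposal is correct and follows essentially the same route as the paper: Walsh inversion $r=2^{-k}\sum_T\widehat r(T)\chi_T$, vanishing of the trivial coefficient $\widehat r(\varnothing)$ for $r\in V_0(M)$, and linearity of $\Phi_M$ together with $\Phi_M(\chi_T)=\calE_T^{(M)}$. You also justify the dual orthogonality relation (summing over $T$), and you make the comparison explicit as $E_2(\tau)-dE_2(d\tau)=\frac{48}{2^k}\sum_{|T\cap\supp(d)|\ \mathrm{odd}}\calE_T^{(M)}$; both are correct and fill in details the paper leaves implicit.
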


\begin{proof}
Walsh orthogonality gives
\[
 r=2^{-k}\sum_{T\subseteq\calP_M}\widehat r(T)\chi_T.
\]
Since \(r\in V_0(M)\), its trivial Walsh coefficient is \(\widehat r(\varnothing)=\sum_{d\mid M}r_d=0\).  Applying the linear map \(\Phi_M\) and using \(\Phi_M(\chi_T)=\calE_T^{(M)}\) proves \eqref{eq:inverse-walsh-eisenstein}.
\end{proof}

\begin{theorem}\label{thm:atkin-lehner-eisenstein}
Let \(Q\mid M\).  With the usual normalized weight-two slash action of the Atkin--Lehner involution \(W_Q\),
\begin{equation}\label{eq:AL-eisenstein}
 \calE_T^{(M)}\bigm|_2 W_Q=\chi_T(Q)\calE_T^{(M)}
 \qquad(T\ne\varnothing).
\end{equation}
\end{theorem}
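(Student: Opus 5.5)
Let \(F=(R_T^{(M)})^{24}\).  We will obtain \eqref{eq:AL-eisenstein} by taking logarithmic derivatives in Proposition \ref{prop:atkin-lehner-divisor}, so the first step is the chain rule for \(D\log\) under a matrix \(\gamma=\begin{pmatrix}a&b\\ c&d\end{pmatrix}\) with \(\det\gamma=Q>0\).  For a weight-zero function \(g\) (here \(g=F\)) one has
\[
 \frac{1}{2\pi i}\frac{d}{d\tau}\log g(\gamma\tau)
 =\frac{Q}{(c\tau+d)^2}\,(D\log g)(\gamma\tau),
\]
because \(\frac{d}{d\tau}(\gamma\tau)=\det\gamma/(c\tau+d)^2\).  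The right-hand side is exactly \((D\log g)\bigm|_2\gamma\) for the usual normalized slash action \(f|_2\gamma(\tau)=(\det\gamma)^{k/2}(c\tau+d)^{-k}f(\gamma\tau)\) with \(k=2\).  Hence \(D\log(g\circ\gamma)=(D\log g)|_2\gamma\).  We will state this once as a lemma and then apply it with \(\gamma=W_Q\) as in \eqref{eq:AL-matrix}.

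Next, apply \(D\log\) to \eqref{eq:atkin-lehner-function}, namely \(F\circ W_Q=A_{T,Q}F^{\chi_T(Q)}\).  The constant \(A_{T,Q}\) disappears.  The left side becomes \((D\log F)|_2W_Q=24\,\calE_T^{(M)}|_2W_Q\), and the right side becomes \(\chi_T(Q)\,D\log F=24\chi_T(Q)\calE_T^{(M)}\).  Dividing by \(24\) gives \eqref{eq:AL-eisenstein}.  We must also check that the result does not depend on the chosen representative \(W_Q\).  Two choices differ by an element of \(\Gamma_0(M)\), and \(\calE_T^{(M)}\) is invariant under \(|_2\) of \(\Gamma_0(M)\) by Proposition \ref{prop:E-holomorphic}.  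For \(Q=M\) one can cross-check against Theorem \ref{thm:walsh-fricke} directly.

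The main point needing care is the validity of \eqref{eq:atkin-lehner-function} as an identity of functions on \(\HH\), not merely of divisors.  It relies on \(F|W_Q\) being a modular function on \(\Gamma_0(M)\), which holds because \(W_Q\) normalizes \(\Gamma_0(M)\).  It also relies on the fact that a modular unit with trivial divisor on compact \(X_0(M)\) is constant.  Both are already used in Proposition \ref{prop:atkin-lehner-divisor}, so here we need only note that the \(D\log\) step is purely local and holomorphic on \(\HH\), since \(F\) has no zeros there.

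As an alternative check, we could use Theorem \ref{thm:eisenstein-isomorphism}.  The operator \(|_2W_Q\) preserves the Eisenstein subspace and permutes width-normalized constant terms according to Lemma \ref{lem:AL-cusp-action}.  Since \(\Const_M\) is injective, \eqref{eq:AL-eisenstein} then follows from \(\chi_T(c\triangle Q)=\chi_T(c)\chi_T(Q)\).  This route, however, requires tracking how width normalization interacts with \(W_Q\), so the logarithmic-derivative argument is the primary one.
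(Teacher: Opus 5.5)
Your proof is correct, but your primary argument is not the paper's; the paper's proof is essentially your ``alternative check''.

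\textbf{The paper's route.} The paper stays inside the Eisenstein space. It uses that $W_Q$ preserves the Eisenstein subspace. It reads off from Lemma~\ref{lem:AL-cusp-action} that the constant-term vector of $\calE_T^{(M)}|_2W_Q$ is $(\Lambda_T^{(M)}\chi_T(c\triangle Q)/24)_{c\mid M}=\chi_T(Q)\Const_M(\calE_T^{(M)})$. It then concludes because $\calE_T^{(M)}|_2W_Q-\chi_T(Q)\calE_T^{(M)}$ is an Eisenstein form with all constant terms zero, hence lies in the zero intersection of the Eisenstein and cuspidal subspaces.

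\textbf{Your route.} You push the function-level relation \eqref{eq:atkin-lehner-function} through the chain rule $D\log(g\circ\gamma)=(D\log g)|_2\gamma$. The constant $A_{T,Q}$ drops out, and the eigenvalue is inherited directly from the divisor computation.

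\textbf{What your route buys.} It needs neither $W_Q$-stability of the Eisenstein subspace nor injectivity of $\Const_M$. It also avoids the normalization issue you rightly flag: that width-normalized constant terms are permuted by $W_Q$ with no extra scalar. That fact is true, since the width-normalized constant term of $f$ at a cusp is $2\pi i$ times the residue of $f\,d\tau$ there, and $(f|_2W_Q)\,d\tau=W_Q^*(f\,d\tau)$. The paper's proof passes over it without comment.

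\textbf{What it costs.} It leans on Proposition~\ref{prop:atkin-lehner-divisor}, i.e.\ on the compactness argument that a modular function with trivial divisor on $X_0(M)$ is constant. It is also tied to the eta-unit structure of $\calE_T^{(M)}$. The paper's argument is pure linear algebra on constant terms, so it applies to any Eisenstein form whose constant-term vector is a $W_Q$-eigenvector.
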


\begin{proof}
The operator \(W_Q\) preserves the Eisenstein subspace.  By Lemma \ref{lem:AL-cusp-action}, it sends the cusp label \(c\) to the label \(c\triangle Q\), and hence the constant-term vector of \(\calE_T^{(M)}|_2W_Q\) is
\[
 \left(\frac{\Lambda_T^{(M)}}{24}\chi_T(c\triangle Q)\right)_{c\mid M}
 =
 \chi_T(Q)\left(\frac{\Lambda_T^{(M)}}{24}\chi_T(c)\right)_{c\mid M}.
\]
The form
\[
 \calE_T^{(M)}|_2W_Q-\chi_T(Q)\calE_T^{(M)}
\]
is an Eisenstein form whose constant term at every cusp is zero.  A holomorphic modular form with zero constant term at every cusp is cuspidal; hence this form lies in the intersection of the Eisenstein and cuspidal subspaces.  That intersection is zero, so the form is zero.
\end{proof}

\begin{proposition}\label{prop:dirichlet-series}
For \(\Re(s)>2\),
\begin{equation}\label{eq:dirichlet-series}
 \sum_{n\ge1}\frac{\mathfrak a_T^{(M)}(n)}{n^s}
 =\zeta(s)\zeta(s-1)
 \prod_{p\mid M}\left(1+\chi_T(p)p^{1-s}\right).
\end{equation}
\end{proposition}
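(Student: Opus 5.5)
The plan is to read the coefficient formula \eqref{eq:a-T-coeff} as a Dirichlet convolution and factor the resulting Dirichlet series. Put
\[
 f_T(d)=\begin{cases}\chi_T(d)\,d,& d\mid M,\\ 0,& d\nmid M.\end{cases}
\]
Then \eqref{eq:a-T-coeff} reads
\[
 \mathfrak a_T^{(M)}(n)=\sum_{d\mid n}f_T(d)\,\sigma_1(n/d),
\]
so \(\mathfrak a_T^{(M)}=f_T*\sigma_1\). For series that converge absolutely, the Dirichlet series of a convolution is the product of the Dirichlet series of the factors. I will therefore compute the two series separately.

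For the divisor-sum factor I use the classical identity
\[
 \sum_{n\ge1}\sigma_1(n)n^{-s}=\zeta(s)\zeta(s-1).
\]
It holds for \(\Re(s)>2\) because \(\sigma_1=1*\mathrm{id}\), and \(\sum n\cdot n^{-s}\) converges absolutely exactly when \(\Re(s)>2\).

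For the finite factor, the series of \(f_T\) is the finite sum \(\sum_{d\mid M}\chi_T(d)\,d^{1-s}\). Since \(M\) is squarefree, the map \(d\mapsto\supp(d)\) is a bijection onto subsets of \(\calP_M\). Both \(\chi_T\) and \(d\mapsto d^{1-s}\) are multiplicative across distinct primes; in particular \(\chi_T(d)=\prod_{p\mid d}\chi_T(p)\). The sum therefore factors prime by prime as
\[
 \prod_{p\mid M}\bigl(1+\chi_T(p)p^{1-s}\bigr).
\]
This is the same local factorization used for \eqref{eq:kappa-def} and in Theorem \ref{thm:cusp-diagonalization}.

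Multiplying the two factors gives \eqref{eq:dirichlet-series}. The only point needing any care is justifying the rearrangement that turns the convolution into a product. This is immediate here: one factor is a finite sum, and the other converges absolutely for \(\Re(s)>2\). The same estimate shows that \(\sum\mathfrak a_T^{(M)}(n)n^{-s}\) itself converges absolutely in that half-plane, since \(|\mathfrak a_T^{(M)}(n)|\le M\,\sigma_1(n)\cdot 2^k\). I do not expect a genuine obstacle.
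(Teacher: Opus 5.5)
Your proposal is correct and is essentially the paper's own proof: you write \(\mathfrak a_T^{(M)}\) as the Dirichlet convolution of \(\sigma_1\) with the finitely supported function \(d\mapsto\chi_T(d)d\) on divisors of \(M\), then factor the finite series prime by prime. Your remarks on absolute convergence for \(\Re(s)>2\) and on the bound \(|\mathfrak a_T^{(M)}(n)|\le 2^kM\sigma_1(n)\) add rigor that the paper leaves implicit.
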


\begin{proof}
The function \(\mathfrak a_T^{(M)}\) is the Dirichlet convolution of \(\sigma_1\) with the finite arithmetic function equal to \(\chi_T(d)d\) for \(d\mid M\) and zero otherwise.  Hence its Dirichlet series is the product of
\[
 \sum_{n\ge1}\frac{\sigma_1(n)}{n^s}=\zeta(s)\zeta(s-1)
\]
and
\[
 \sum_{d\mid M}\frac{\chi_T(d)d}{d^s}
 =\prod_{p\mid M}\left(1+\chi_T(p)p^{1-s}\right).
\]
This proves \eqref{eq:dirichlet-series}.
\end{proof}

\begin{theorem}\label{thm:good-hecke}
If \(\ell\nmid M\) is prime, then
\begin{equation}\label{eq:good-hecke}
 T_\ell \calE_T^{(M)}=(1+\ell)\calE_T^{(M)}.
\end{equation}
\end{theorem}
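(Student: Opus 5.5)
The plan is to check the eigenvalue relation directly on \(q\)-expansions, via the Fourier formula \eqref{eq:E-fourier} and the multiplicativity of the coefficients. For a prime \(\ell\nmid M\) and a weight-two form \(f=\sum_{n\ge0}a(n)q^n\) on \(\Gamma_0(M)\) with trivial character, recall
\[
 T_\ell f=\sum_{n\ge0}\bigl(a(\ell n)+\ell\,a(n/\ell)\bigr)q^n,
\]
where \(a(n/\ell)=0\) if \(\ell\nmid n\). The coefficients of \(\calE_T^{(M)}\) are \(a(0)=\kappa_T^{(M)}\) and \(a(n)=-\mathfrak a_T^{(M)}(n)\) for \(n\ge1\). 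We therefore have to establish two things. For the constant term we need \(a(0)+\ell a(0)=(1+\ell)a(0)\), which holds trivially. For \(n\ge1\) we need
\[
 \mathfrak a_T^{(M)}(\ell n)+\ell\,\mathfrak a_T^{(M)}(n/\ell)=(1+\ell)\mathfrak a_T^{(M)}(n).
\]

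For the second relation we use the Dirichlet series factorization of Proposition \ref{prop:dirichlet-series}. Because \(\mathfrak a_T^{(M)}=g*\sigma_1\), where \(g\) is the multiplicative function supported on divisors of \(M\) with \(g(d)=\chi_T(d)d\), the function \(\mathfrak a_T^{(M)}\) is multiplicative. Its Euler factor at \(\ell\nmid M\) is the same as that of \(\sigma_1\), namely \((1-\ell^{-s})^{-1}(1-\ell^{1-s})^{-1}\).

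Now write \(n=\ell^a m\) with \(\ell\nmid m\). Multiplicativity gives \(\mathfrak a_T^{(M)}(\ell^j m)=\sigma_1(\ell^j)\,\mathfrak a_T^{(M)}(m)\) for every \(j\ge0\). The identity then reduces to the classical relation
\[
 \sigma_1(\ell^{a+1})+\ell\,\sigma_1(\ell^{a-1})=(1+\ell)\sigma_1(\ell^a),
\]
with \(\sigma_1(\ell^{-1})=0\). This is exactly the recursion encoded by the Euler factor \((1-(1+\ell)X+\ell X^2)^{-1}\) with \(X=\ell^{-s}\), and it can also be checked directly from \(\sigma_1(\ell^j)=(\ell^{j+1}-1)/(\ell-1)\).

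An alternative route is structural. The map \(\Phi_M\) is linear, and each \(dE_2(d\tau)\) with \(d\mid M\) behaves formally like an eigen-combination under \(T_\ell\). However, \(E_2\) is only quasimodular, so this argument would require care. For that reason I would rely on the coefficient computation above.

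I expect no serious obstacle. The only points needing care are to fix the normalization of \(T_\ell\) at weight two, with the factor \(\ell^{k-1}=\ell\), and to record that the constant term transforms by \(1+\ell\). That the constant term is preserved in this way is consistent with the fact that \(T_\ell\) preserves the Eisenstein subspace. Combining the constant-term check with the coefficient identity for \(n\ge1\) gives \eqref{eq:good-hecke}.
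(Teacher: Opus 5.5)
Your proposal is correct and takes essentially the same route as the paper. Both arguments verify the eigenvalue relation on $q$-expansions, using the Euler factor at $\ell\nmid M$ from Proposition \ref{prop:dirichlet-series} together with the constant-term check $a(0)\mapsto(1+\ell)a(0)$. Your reduction via multiplicativity to $\sigma_1(\ell^{a+1})+\ell\,\sigma_1(\ell^{a-1})=(1+\ell)\sigma_1(\ell^a)$ simply spells out the step the paper compresses into the word ``equivalently''.
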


\begin{proof}
Let \(\calE_T^{(M)}=a_0+
\sum_{n\ge1}a_nq^n\), so \(a_n=-\mathfrak a_T^{(M)}(n)\).  For \(\ell\nmid M\), the local Euler factor of \eqref{eq:dirichlet-series} at \(\ell\) is
\[
 \frac{1}{(1-\ell^{-s})(1-\ell^{1-s})}.
\]
Equivalently, the coefficients satisfy
\[
 \mathfrak a_T^{(M)}(\ell n)+\ell \mathfrak a_T^{(M)}(n/\ell)=(1+\ell)\mathfrak a_T^{(M)}(n),
\]
with the convention \(\mathfrak a_T^{(M)}(n/\ell)=0\) if \(\ell\nmid n\).  The constant term is also multiplied by \(1+\ell\) under the weight-two Hecke operator \(T_\ell\).  Therefore the \(q\)-expansion of \(T_\ell\calE_T^{(M)}\) is \((1+\ell)\calE_T^{(M)}\).
\end{proof}

\begin{theorem}\label{thm:Up-local}
Let \(p\mid M\).  If \(p\in T\), then
\begin{equation}\label{eq:Up-in-T}
 U_p\calE_T^{(M)}=\calE_T^{(M)}.
\end{equation}
If \(p\notin T\) and \(T\ne\varnothing\), then
\begin{equation}\label{eq:Up-not-in-T}
 U_p\calE_T^{(M)}=p\calE_T^{(M)}+(p+1)\calE_{T\cup\{p\}}^{(M)}.
\end{equation}
\end{theorem}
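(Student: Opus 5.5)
The plan is to compute \(U_p\) directly on \(q\)-expansions, using formula \eqref{eq:E-T-E2},
\[
 \calE_T^{(M)}(\tau)=\frac1{24}\sum_{d\mid M}\chi_T(d)\,d\,E_2(d\tau),
\]
and grouping the divisors of \(M\) into pairs \(\{d,pd\}\) with \(p\nmid d\). Since \(U_p\) is linear and acts on \(q\)-expansions by \(\sum a_nq^n\mapsto\sum a_{pn}q^n\), we may apply it termwise to the quasimodular pieces \(E_2(d\tau)\), even though each piece alone is not modular. Modularity is only needed for the conclusion, and both sides of \eqref{eq:Up-in-T} and \eqref{eq:Up-not-in-T} are already known to lie in \(M_2(\Gamma_0(M))\) by Proposition \ref{prop:E-holomorphic}.

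\textbf{Step 1: two local rules.} For \(d\mid M\) with \(p\nmid d\), we establish
\[
 U_p\bigl[E_2(pd\tau)\bigr]=E_2(d\tau),\qquad
 U_p\bigl[E_2(d\tau)\bigr]=(1+p)E_2(d\tau)-pE_2(pd\tau).
\]
The first rule is immediate from the definition of \(U_p\). For the second, write \(E_2(d\tau)=1-24\sum_m\sigma_1(m)q^{dm}\). Because \(p\nmid d\), the coefficient of \(q^n\) in \(U_p[E_2(d\tau)]\) is \(-24\,\sigma_1(pn/d)\) when \(d\mid n\), and \(0\) otherwise. Multiplicativity of \(\sigma_1\) gives the Hecke relation \(\sigma_1(pm)=(1+p)\sigma_1(m)-p\,\sigma_1(m/p)\), which yields the nonconstant coefficients. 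For the constant term, both sides equal \(1\), since \((1+p)-p=1\). This is the same relation used in Theorem \ref{thm:good-hecke}, now applied at a prime dividing the level.

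\textbf{Step 2: pair computation.} Put \(\varepsilon=\chi_T(p)\), so that \(\chi_T(pd)=\varepsilon\chi_T(d)\). The pair \(\{d,pd\}\) contributes \(\chi_T(d)\bigl[dE_2(d\tau)+\varepsilon pdE_2(pd\tau)\bigr]\) to \(24\,\calE_T^{(M)}\). Applying Step 1, this pair is sent to
\[
 \chi_T(d)\,d\bigl[(1+p+\varepsilon p)E_2(d\tau)-pE_2(pd\tau)\bigr].
\]

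\begin{itemize}[leftmargin=*]
\item If \(p\in T\), then \(\varepsilon=-1\) and the image is \(\chi_T(d)\,d\bigl[E_2(d\tau)-pE_2(pd\tau)\bigr]\), which is the original pair. Summing over all pairs gives \eqref{eq:Up-in-T}.
\item If \(p\notin T\), then \(\varepsilon=1\) and the image is \(\chi_T(d)\,d\bigl[(2p+1)E_2(d\tau)-pE_2(pd\tau)\bigr]\). The same pair contributes \(\chi_T(d)\bigl[dE_2(d\tau)-pdE_2(pd\tau)\bigr]\) to \(24\,\calE_{T\cup\{p\}}^{(M)}\), since \(\chi_{T\cup\{p\}}(d)=\chi_T(d)\) for \(p\nmid d\). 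Then \(p\) times the original pair plus \((p+1)\) times this pair equals
\[
 \chi_T(d)\,d\bigl[(2p+1)E_2(d\tau)+(p^2-p^2-p)E_2(pd\tau)\bigr],
\]
which matches the image. Summing over pairs gives \eqref{eq:Up-not-in-T}.
\end{itemize}

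\textbf{Expected obstacle.} The only delicate point is treating the quasimodular \(E_2\) termwise. I would handle it as above: the argument is an identity of formal \(q\)-series, including constant terms, and modularity of the final expressions is supplied by Proposition \ref{prop:E-holomorphic}.
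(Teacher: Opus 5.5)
Your proof is correct and follows essentially the same route as the paper. Both use the two local $q$-expansion rules $U_pE_2(pd\tau)=E_2(d\tau)$ and $U_pE_2(d\tau)=(1+p)E_2(d\tau)-pE_2(pd\tau)$, derived from $\sigma_1(pm)=(1+p)\sigma_1(m)-p\,\sigma_1(m/p)$, and then compare the pairs $\{d,pd\}$ term by term. You also state explicitly that this is an identity of formal $q$-series, constant terms included, with modularity of both sides supplied by Proposition \ref{prop:E-holomorphic}; the paper leaves this point implicit.
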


\begin{proof}
Write a divisor \(d\mid M\) uniquely as \(d=e\) or \(d=pe\), with \(e\mid M/p\).  For \(p\nmid e\), the standard \(q\)-expansion identities are
\begin{align*}
 U_pE_2(e\tau)&=(1+p)E_2(e\tau)-pE_2(pe\tau),\\
 U_pE_2(pe\tau)&=E_2(e\tau).
\end{align*}
They follow directly from the Fourier expansion of \(E_2\) and the divisor-sum identity \(\sigma_1(pm)=(1+p)\sigma_1(m)-p\sigma_1(m/p)\).

If \(p\in T\), then \(\chi_T(pe)=-\chi_T(e)\).  The pair of terms in \eqref{eq:E-T-E2} indexed by \(e\) and \(pe\) is
\[
 \frac{\chi_T(e)e}{24}\bigl(E_2(e\tau)-pE_2(pe\tau)\bigr),
\]
and applying \(U_p\) leaves it unchanged.  This proves \eqref{eq:Up-in-T}.

If \(p\notin T\), then \(\chi_T(pe)=\chi_T(e)\).  The pair of terms is
\[
 \frac{\chi_T(e)e}{24}\bigl(E_2(e\tau)+pE_2(pe\tau)\bigr).
\]
Applying \(U_p\) gives
\[
 \frac{\chi_T(e)e}{24}\bigl((1+2p)E_2(e\tau)-pE_2(pe\tau)\bigr).
\]
On the other hand, the corresponding pair in
\(p\calE_T^{(M)}+(p+1)\calE_{T\cup\{p\}}^{(M)}\) is exactly
\[
 \frac{\chi_T(e)e}{24}
 \left(p(E_2(e\tau)+pE_2(pe\tau))
 +(p+1)(E_2(e\tau)-pE_2(pe\tau))\right),
\]
which simplifies to the same expression.  Summing over \(e\mid M/p\) proves \eqref{eq:Up-not-in-T}.
\end{proof}

\begin{corollary}\label{cor:Up-blocks}
Let \(p\mid M\) and let \(T\ne\varnothing\) with \(p\notin T\).  On the span of
\[
 \calE_T^{(M)},\qquad \calE_{T\cup\{p\}}^{(M)},
\]
the operator \(U_p\) is represented, in this ordered basis, by
\[
 \begin{pmatrix}
 p&0\\
 p+1&1
 \end{pmatrix}.
\]
Its eigenvalues are \(p\) and \(1\).  An eigenvector for the eigenvalue \(1\) is \(\calE_{T\cup\{p\}}^{(M)}\), and an eigenvector for the eigenvalue \(p\) is
\[
 \calE_T^{(M)}+\frac{p+1}{p-1}\calE_{T\cup\{p\}}^{(M)}.
\]
\end{corollary}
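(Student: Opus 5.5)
The plan is to read the corollary directly off Theorem \ref{thm:Up-local}, applied to the two basis vectors. Since \(p\notin T\) and \(T\ne\varnothing\), formula \eqref{eq:Up-not-in-T} gives
\[
 U_p\calE_T^{(M)}=p\,\calE_T^{(M)}+(p+1)\calE_{T\cup\{p\}}^{(M)}.
\]
Since \(p\in T\cup\{p\}\), formula \eqref{eq:Up-in-T} gives \(U_p\calE_{T\cup\{p\}}^{(M)}=\calE_{T\cup\{p\}}^{(M)}\). So the span is \(U_p\)-stable. By Theorem \ref{thm:Eisenstein-basis} the two forms are linearly independent, so they form a genuine basis. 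Writing images as columns, the matrix in the ordered basis \((\calE_T^{(M)},\calE_{T\cup\{p\}}^{(M)})\) is the stated lower-triangular matrix with diagonal entries \(p\) and \(1\).

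The eigenvalues of a triangular matrix are its diagonal entries, so they are \(p\) and \(1\). These are distinct because \(p\ge2\). The eigenvector for the eigenvalue \(1\) is \(\calE_{T\cup\{p\}}^{(M)}\), as already shown.

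For the eigenvalue \(p\), I will try the vector \(\calE_T^{(M)}+x\,\calE_{T\cup\{p\}}^{(M)}\) with \(x\) to be determined. Its image under \(U_p\) is
\[
 p\,\calE_T^{(M)}+(p+1+x)\,\calE_{T\cup\{p\}}^{(M)}.
\]
Requiring this to equal \(p\) times the vector gives \(p+1+x=px\), so \(x=(p+1)/(p-1)\). This division is legitimate because \(p\ne1\).

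There is no real obstacle. The only point to check is the ordering and column convention of the matrix, which must match the expansion coefficients above.
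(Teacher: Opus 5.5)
Your proposal is correct and follows the paper's own proof: the matrix is read off from the two cases of Theorem~\ref{thm:Up-local}, and the eigenvectors are verified by direct substitution. Your added remarks, that the two forms are linearly independent by Theorem~\ref{thm:Eisenstein-basis} and that $p\ne1$ justifies the division, simply make explicit what the paper leaves implicit.
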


\begin{proof}
The matrix is exactly the pair of formulae in Theorem \ref{thm:Up-local}.  The eigenvectors follow by direct substitution.
\end{proof}

\begin{theorem}\label{thm:simultaneous-Up-eigenbasis}
For each nonempty subset \(A\subseteq\calP_M\), define
\begin{equation}\label{eq:F-A-eigenbasis}
 \mathfrak E_A^{(M)}=
 \sum_{B\subseteq\calP_M\setminus A}
 \left(\prod_{p\in B}\frac{p+1}{p-1}\right)
 \calE_{A\cup B}^{(M)}.
\end{equation}
Then the forms \(\mathfrak E_A^{(M)}\), \(\varnothing\ne A\subseteq\calP_M\), form a basis of the Eisenstein subspace and are simultaneous eigenvectors for all \(U_p\), \(p\mid M\):
\begin{equation}\label{eq:simultaneous-Up-eigenvalues}
 U_p\mathfrak E_A^{(M)}=
 \begin{cases}
 \mathfrak E_A^{(M)},&p\in A,\\
 p\mathfrak E_A^{(M)},&p\notin A.
 \end{cases}
\end{equation}
\end{theorem}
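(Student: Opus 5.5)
The plan is to reduce everything to the two-dimensional blocks of Corollary \ref{cor:Up-blocks}, using the tensor structure of the Walsh basis over the primes. For each prime \(q\mid M\) put \(\rho_q=(q+1)/(q-1)\). Then \eqref{eq:F-A-eigenbasis} reads
\[
 \mathfrak E_A^{(M)}=\sum_{B\subseteq\calP_M\setminus A}\Bigl(\prod_{q\in B}\rho_q\Bigr)\calE_{A\cup B}^{(M)}.
\]

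\emph{Basis property.} Order the nonempty subsets by inclusion. The coefficient of \(\calE_A^{(M)}\) in \(\mathfrak E_A^{(M)}\) is \(1\) (the term \(B=\varnothing\)). Every other term \(\calE_{A\cup B}^{(M)}\) has \(A\cup B\supsetneq A\). Hence the transition matrix from \(\{\calE_T^{(M)}\}\) to \(\{\mathfrak E_A^{(M)}\}\) is unitriangular. Theorem \ref{thm:Eisenstein-basis} then shows that the \(\mathfrak E_A^{(M)}\) form a basis of the Eisenstein subspace.

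\emph{Case \(p\in A\).} Every index \(A\cup B\) contains \(p\). By \eqref{eq:Up-in-T}, \(U_p\) fixes each summand, so \(U_p\mathfrak E_A^{(M)}=\mathfrak E_A^{(M)}\).

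\emph{Case \(p\notin A\).} Split the subsets \(B\subseteq\calP_M\setminus A\) into pairs \(\{B',B'\cup\{p\}\}\) with \(B'\subseteq\calP_M\setminus(A\cup\{p\})\). Put \(T=A\cup B'\), which is nonempty because \(A\ne\varnothing\) and does not contain \(p\). The pair contributes
\[
 \Bigl(\prod_{q\in B'}\rho_q\Bigr)\Bigl(\calE_T^{(M)}+\rho_p\,\calE_{T\cup\{p\}}^{(M)}\Bigr).
\]
By Corollary \ref{cor:Up-blocks}, the bracket is exactly the \(p\)-eigenvector of the block. One can also check this directly from Theorem \ref{thm:Up-local}:
\[
 U_p\bigl(\calE_T+\rho_p\calE_{T\cup\{p\}}\bigr)=p\,\calE_T+\bigl(p+1+\rho_p\bigr)\calE_{T\cup\{p\}},
\]
and the identity \(p+1+\rho_p=p\rho_p\) holds because \((p+1)(p-1)+(p+1)=p(p+1)\). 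Summing over \(B'\) gives \(U_p\mathfrak E_A^{(M)}=p\,\mathfrak E_A^{(M)}\).

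The main obstacle is only the bookkeeping in this pairing step. One has to check that the pairing \(B\leftrightarrow(B',\,B'\cup\{p\})\) is a bijection onto the subsets of \(\calP_M\setminus A\). One also has to check that the coefficient factors as \(\prod_{q\in B'}\rho_q\) times the local factor \(1\) or \(\rho_p\). Both follow from the multiplicativity of \(\prod_{q\in B}\rho_q\) over \(B\). Nonemptiness of \(A\) is what guarantees \(T\ne\varnothing\), so that Theorem \ref{thm:Up-local} applies.
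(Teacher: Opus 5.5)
Your proof is correct and follows the paper's argument essentially step for step. Both use the unitriangular transition matrix with respect to inclusion for the basis claim. Both apply Theorem~\ref{thm:Up-local} termwise when $p\in A$. Both pair $B'$ with $B'\cup\{p\}$ to invoke the $p$-eigenvector of Corollary~\ref{cor:Up-blocks} when $p\notin A$. Your explicit check $p+1+\rho_p=p\rho_p$ just carries out the ``direct substitution'' that the paper leaves implicit.
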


\begin{proof}
The transition matrix from the Walsh basis to the forms \(\mathfrak E_A^{(M)}\) is triangular with respect to reverse inclusion, and its diagonal entries are \(1\); hence the \(\mathfrak E_A^{(M)}\) form a basis.  If \(p\in A\), every Walsh form occurring in \eqref{eq:F-A-eigenbasis} has index containing \(p\), so Theorem \ref{thm:Up-local} gives \(U_p\mathfrak E_A^{(M)}=\mathfrak E_A^{(M)}\).

Assume \(p\notin A\).  Pair the terms in \eqref{eq:F-A-eigenbasis} according to subsets \(B\subseteq\calP_M\setminus(A\cup\{p\})\).  The corresponding pair is
\[
 c_B\left(\calE_{A\cup B}^{(M)}+\frac{p+1}{p-1}\calE_{A\cup B\cup\{p\}}^{(M)}\right),
 \qquad
 c_B=\prod_{\ell\in B}\frac{\ell+1}{\ell-1}.
\]
By Corollary \ref{cor:Up-blocks}, the expression in parentheses is an eigenvector of \(U_p\) with eigenvalue \(p\).  Summing over \(B\) proves the second line of \eqref{eq:simultaneous-Up-eigenvalues}.
\end{proof}

\begin{corollary}\label{cor:fixed-lambert}
If \(T\ne\varnothing\) and \(|T|\) is even, then
\begin{equation}\label{eq:fixed-lambert}
 \sum_{n\ge1}\mathfrak a_T^{(M)}(n)e^{-2\pi n/\sqrt M}=\kappa_T^{(M)}.
\end{equation}
\end{corollary}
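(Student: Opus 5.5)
The plan is to evaluate the weight-two Fricke transformation of \(\calE_T^{(M)}\) at its fixed point \(\tau_M=i/\sqrt M\), and then read off the value from the Fourier expansion \eqref{eq:E-fourier}.

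\emph{Step 1: transformation law.} Take Theorem \ref{thm:atkin-lehner-eisenstein} with \(Q=M\). Since \(|T|\) is even, \(\chi_T(M)=(-1)^{|T|}=1\), so \(\calE_T^{(M)}|_2W_M=\calE_T^{(M)}\). To keep the normalization honest, I would instead derive this directly by logarithmically differentiating Theorem \ref{thm:walsh-fricke}. For \(|T|\) even it gives \(R_T^{(M)}(-1/(M\tau))=R_T^{(M)}(\tau)\), because \(\mathfrak c_T^{(M)}=1\) when \(|T|\ge2\). Apply \(\frac1{2\pi i}\frac{d}{d\tau}\) to both sides and use \(\frac{d}{d\tau}(-1/(M\tau))=1/(M\tau^2)\). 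This yields
\[
 \frac{1}{M\tau^2}\,\calE_T^{(M)}\!\left(-\frac1{M\tau}\right)=\calE_T^{(M)}(\tau).
\]
This step does not depend on any choice of representative for \(W_M\).

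\emph{Step 2: evaluation at the fixed point.} Set \(\tau=\tau_M\). Then \(-1/(M\tau_M)=\tau_M\) and \(M\tau_M^2=-1\), so the identity reads \(-\calE_T^{(M)}(\tau_M)=\calE_T^{(M)}(\tau_M)\). Hence \(\calE_T^{(M)}(\tau_M)=0\). A logarithmic derivative is well defined here because \(R_T^{(M)}\) has no zeros on \(\HH\).

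\emph{Step 3: Fourier expansion.} At \(\tau_M\) we have \(q=e^{-2\pi/\sqrt M}\), so \eqref{eq:E-fourier} gives
\[
 0=\kappa_T^{(M)}-\sum_{n\ge1}\mathfrak a_T^{(M)}(n)e^{-2\pi n/\sqrt M},
\]
which is exactly \eqref{eq:fixed-lambert}. Absolute convergence is immediate from \(|\mathfrak a_T^{(M)}(n)|\le \sigma_1(M)\sigma_1(n)\).

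The main obstacle is only the sign and weight bookkeeping in Step 1: the factor \((M\tau^2)^{-1}\), equivalently the weight-two automorphy factor \(-1\) at the fixed point, is what forces the value to vanish rather than being an arbitrary constant. Differentiating the exact identity from Theorem \ref{thm:walsh-fricke} avoids any ambiguity about how the slash operator is normalized.
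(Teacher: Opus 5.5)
Your proposal is correct and follows essentially the same route as the paper. You differentiate the Fricke invariance $R_T^{(M)}(W_M\tau)=R_T^{(M)}(\tau)$ from Theorem~\ref{thm:walsh-fricke} (using $\mathfrak c_T^{(M)}=1$ for even $|T|\ge 2$), use that $W_M$ has derivative $-1$ at $\tau_M$ to get $D\log R_T^{(M)}(\tau_M)=0$, and substitute $q=e^{-2\pi/\sqrt M}$ into \eqref{eq:E-fourier}. The only differences are that you record the full weight-two transformation of $\calE_T^{(M)}$ before specializing to $\tau_M$, and you add an explicit absolute-convergence bound, which the paper leaves implicit.
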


\begin{proof}
For even \(|T|\), Theorem \ref{thm:walsh-fricke} gives \(R_T^{(M)}(W_M\tau)=R_T^{(M)}(\tau)\).  At the fixed point \(\tau_M=i/\sqrt M\), the derivative of \(W_M\) is \(-1\).  Differentiating the identity at \(\tau_M\) therefore gives
\[
 \frac{d}{d\tau}R_T^{(M)}(\tau_M)=-\frac{d}{d\tau}R_T^{(M)}(\tau_M),
\]
so \(D\log R_T^{(M)}(\tau_M)=0\).  Substitution of \(q=e^{-2\pi/\sqrt M}\) into \eqref{eq:E-fourier} proves \eqref{eq:fixed-lambert}.
\end{proof}

\section{The Heegner prime product}\label{sec:heegner}

We now specialize to
\[
 \calP=\{2,3,7,11,19,43,67,163\},
 \qquad
 N=\prod_{p\in\calP}p=4122175134,
\]
and
\[
 \calH=\{1\}\cup\calP.
\]
The following table records the numerical constants used below.
\[
\begin{array}{c|c}
\text{quantity} & \text{value}\\ \hline
N & 4122175134\\
\sqrt N & 64204.167575010266\ldots\\
S=\sum_{h\in\calH}h & 316\\
S^*=\sum_{h\in\calH}N/h & 8920581979\\
\prod_{p\in\calP}(p-1)/24 & 40415760
\end{array}
\]

For \(0\le r\le 8\), write
\[
 E_r(\tau)=E_r^{(N)}(\tau).
\]
Since
\[
 \sum_{h\in\calH}h=316,
\]
the product
\begin{equation}\label{eq:X-H}
 X_{\calH}(q)=\prod_{h\in\calH}(q^h;q^h)_\infty
\end{equation}
has eta-normalization
\begin{equation}\label{eq:F-plus}
 F_+(\tau)=q^{79/6}X_{\calH}(q)=\prod_{h\in\calH}\eta(h\tau)=E_0(\tau)E_1(\tau).
\end{equation}
Define
\begin{equation}\label{eq:H-star}
 \calH^*=\{N/h:h\in\calH\}.
\end{equation}
Then
\[
 \calH^*=\{d\mid N:\omega(d)=7\text{ or }8\},
\]
and
\begin{equation}\label{eq:F-minus}
 F_-(\tau)=E_7(\tau)E_8(\tau)=\prod_{a\in\calH^*}\eta(a\tau).
\end{equation}

\begin{corollary}\label{cor:heegner-boundary}
The Fricke involution \(W_N\tau=-1/(N\tau)\) satisfies
\begin{align}
 F_+(W_N\tau)&=N^4(-i\tau)^{9/2}F_-(\tau),\label{eq:Fplus-fricke}\\
 F_-(W_N\tau)&=N^{1/2}(-i\tau)^{9/2}F_+(\tau).\label{eq:Fminus-fricke}
\end{align}
Consequently,
\begin{equation}\label{eq:G-fricke}
 G(W_N\tau)=N^{9/2}(-i\tau)^9G(\tau),
 \qquad G(\tau)=F_+(\tau)F_-(\tau).
\end{equation}
\end{corollary}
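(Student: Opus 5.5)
The plan is to specialize Theorem \ref{thm:layer-fricke} to \(M=N\), \(k=8\), and multiply the layer formulas for the two layers making up each boundary product. By \eqref{eq:F-plus} and \eqref{eq:F-minus}, \(F_+=E_0E_1\) and \(F_-=E_7E_8\). So both transformation laws come from \eqref{eq:layer-fricke} with \(r\in\{0,1\}\) and \(r\in\{7,8\}\), respectively.

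For \(F_+\), I will use \(r=0\) and \(r=1\).
\begin{itemize}
\item For \(r=0\), \eqref{eq:layer-fricke} gives the factor \(N^{\frac12\binom{7}{0}}(-i\tau)^{\frac12\binom{8}{0}}=N^{1/2}(-i\tau)^{1/2}\) times \(E_8\).
\item For \(r=1\), it gives \(N^{\frac12\binom{7}{1}}(-i\tau)^{\frac12\binom81}=N^{7/2}(-i\tau)^{4}\) times \(E_7\).
\end{itemize}
Multiplying, the powers of \(N\) add to \(N^{4}\) and the powers of \(-i\tau\) add to \((-i\tau)^{9/2}\), with product \(E_7E_8=F_-\). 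This is \eqref{eq:Fplus-fricke}.

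For \(F_-\), I will use \(r=7\) and \(r=8\).
\begin{itemize}
\item For \(r=7\), the factor is \(N^{\frac12\binom77}(-i\tau)^{\frac12\binom87}=N^{1/2}(-i\tau)^4\) times \(E_1\).
\item For \(r=8\), the convention \(\binom{7}{8}=0\) gives the factor \((-i\tau)^{1/2}\) times \(E_0\), with no power of \(N\).
\end{itemize}
This yields \eqref{eq:Fminus-fricke}.

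Multiplying \eqref{eq:Fplus-fricke} and \eqref{eq:Fminus-fricke} gives
\[
G(W_N\tau)=N^{4}N^{1/2}(-i\tau)^{9/2}(-i\tau)^{9/2}F_-(\tau)F_+(\tau).
\]
The powers of \(N\) combine to \(N^{9/2}\), and \(F_-F_+=G\).

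The only point needing care is combining fractional powers of \(-i\tau\). Every such power is taken with the principal branch inherited from \eqref{eq:eta-inversion}. Since \(\tau\in\HH\), the number \(-i\tau\) lies in the open right half-plane, so \((-i\tau)^a(-i\tau)^b=(-i\tau)^{a+b}\) holds for real exponents. This justifies \((-i\tau)^{1/2}(-i\tau)^4=(-i\tau)^{9/2}\) and \((-i\tau)^{9/2}(-i\tau)^{9/2}=(-i\tau)^9\).

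As a consistency check, I will apply \eqref{eq:Fplus-fricke} and then \eqref{eq:Fminus-fricke} at \(W_N\tau\). Using \(W_N^2=\mathrm{id}\) and \((-i\tau)(-iW_N\tau)=1/N\), this recovers \(F_+\), which confirms the constants \(N^4\) and \(N^{1/2}\). I expect no real obstacle beyond this bookkeeping of binomial exponents and the branch convention.
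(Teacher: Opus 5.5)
Your proposal is correct and follows the paper's proof exactly: specialize Theorem~\ref{thm:layer-fricke} with \(k=8\) to the layer pairs \((0,1)\) and \((7,8)\), add the binomial exponents, and multiply the two resulting laws to obtain the law for \(G\). Your remark on the principal branch of \((-i\tau)^{a}\) and your consistency check via \(W_N^2=\mathrm{id}\) are both sound, and the paper leaves them implicit.
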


\begin{proof}
Equations \eqref{eq:Fplus-fricke} and \eqref{eq:Fminus-fricke} are Theorem \ref{thm:layer-fricke} applied to the layer pairs \((0,1)\) and \((7,8)\).  Multiplication gives \eqref{eq:G-fricke}.
\end{proof}

The top Walsh character is the Möbius character:
\[
 \chi_{\calP}(d)=\mu(d),\qquad d\mid N.
\]
Therefore
\begin{equation}\label{eq:mobius-Heegner-unit}
 R_{\calP}^{(N)}(\tau)=\prod_{d\mid N}\eta(d\tau)^{\mu(d)}.
\end{equation}
Since \(|\calP|=8\), Theorem \ref{thm:walsh-fricke} gives
\begin{equation}\label{eq:mobius-fricke-invariance}
 R_{\calP}^{(N)}(W_N\tau)=R_{\calP}^{(N)}(\tau).
\end{equation}
Moreover, \eqref{eq:kappa-def} gives
\begin{equation}\label{eq:kappa-mobius}
 \kappa_{\calP}^{(N)}=\frac1{24}\prod_{p\in\calP}(1-p)=40415760.
\end{equation}
For \(n\ge1\), let \(n^\perp\) be the integer obtained from \(n\) by removing all powers of the primes in \(\calP\):
\begin{equation}\label{eq:n-perp}
 n^\perp=\frac{n}{\prod_{p\in\calP}p^{v_p(n)}}.
\end{equation}

\begin{corollary}\label{cor:mobius-sieve}
For \(|q|<1\),
\begin{equation}\label{eq:mobius-sieve}
 D\log R_{\calP}^{(N)}(\tau)
 =40415760-
 \sum_{n\ge1}\sigma_1(n^\perp)q^n.
\end{equation}
Consequently,
\begin{equation}\label{eq:mobius-lambert-fixed}
 \sum_{n\ge1}\sigma_1(n^\perp)e^{-2\pi n/\sqrt N}=40415760.
\end{equation}
\end{corollary}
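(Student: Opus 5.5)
The plan is to specialize the general Fourier expansion \eqref{eq:E-fourier} to \(M=N\) and \(T=\calP\), and then identify the coefficient \(\mathfrak a_{\calP}^{(N)}(n)\) with \(\sigma_1(n^\perp)\). By \eqref{eq:E-fourier} and \eqref{eq:kappa-mobius}, we have
\[
 D\log R_{\calP}^{(N)}(\tau)=40415760-\sum_{n\ge1}\mathfrak a_{\calP}^{(N)}(n)q^n,
\]
where \(\mathfrak a_{\calP}^{(N)}(n)=\sum_{d\mid (n,N)}\mu(d)\,d\,\sigma_1(n/d)\), using \(\chi_{\calP}(d)=\mu(d)\) for \(d\mid N\). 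It therefore remains to prove the arithmetic identity \(\mathfrak a_{\calP}^{(N)}(n)=\sigma_1(n^\perp)\).

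For this identity I will use the Dirichlet series of Proposition \ref{prop:dirichlet-series}. With \(\chi_{\calP}(p)=-1\) for every \(p\in\calP\), it becomes
\[
 \zeta(s)\zeta(s-1)\prod_{p\in\calP}\bigl(1-p^{1-s}\bigr).
\]
Each factor \(1-p^{1-s}\) cancels the Euler factor \((1-p^{1-s})^{-1}\) of \(\zeta(s-1)\) at \(p\). What survives at \(p\in\calP\) is \((1-p^{-s})^{-1}\), the Dirichlet series of the constant function \(1\) on powers of \(p\). At primes outside \(\calP\), the Euler factor is that of \(\sigma_1\).

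The arithmetic function \(\mathfrak a_{\calP}^{(N)}\) is multiplicative, being a Dirichlet convolution of multiplicative functions. The Euler factors just computed therefore show that \(\mathfrak a_{\calP}^{(N)}(p^j)=1\) for \(p\in\calP\) and \(\mathfrak a_{\calP}^{(N)}(\ell^j)=\sigma_1(\ell^j)\) for \(\ell\notin\calP\). This is exactly the statement \(\mathfrak a_{\calP}^{(N)}(n)=\sigma_1(n^\perp)\).

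As a local sanity check, take \(v_p(n)=j\ge1\). The local contribution is \(\sigma_1(p^j)-p\,\sigma_1(p^{j-1})=1\), which matches. This proves \eqref{eq:mobius-sieve}.

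For \eqref{eq:mobius-lambert-fixed}, note that \(|\calP|=8\) is even. Corollary \ref{cor:fixed-lambert} applied with \(M=N\) and \(T=\calP\) then gives
\[
 \sum_{n\ge1}\mathfrak a_{\calP}^{(N)}(n)e^{-2\pi n/\sqrt N}=\kappa_{\calP}^{(N)}.
\]
Substituting the identity just proved and \eqref{eq:kappa-mobius} finishes the argument.

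The only step requiring care is the arithmetic identification via the local Euler factors; everything else is direct substitution. One should also confirm the numerical value \(\prod_{p\in\calP}(p-1)/24=40415760\), which is recorded in the table, and that the sign \((-1)^8=+1\) gives \(\prod_{p}(1-p)=\prod_p(p-1)\).
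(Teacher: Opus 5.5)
Your proposal is correct and follows essentially the same route as the paper. You specialize \eqref{eq:E-fourier} using \eqref{eq:kappa-mobius}, and you identify \(\mathfrak a_{\calP}^{(N)}(n)=\sigma_1(n^\perp)\) by multiplicativity together with the local computation \(\sigma_1(p^a)-p\,\sigma_1(p^{a-1})=1\); your detour through the Euler factors of Proposition \ref{prop:dirichlet-series} only repackages that computation. You then invoke Corollary \ref{cor:fixed-lambert}, which applies because \(|\calP|=8\) is even.
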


\begin{proof}
For \(T=\calP\), the coefficient \eqref{eq:a-T-coeff} becomes
\[
 \mathfrak a_{\calP}^{(N)}(n)=
 \sum_{\substack{d\mid N\\ d\mid n}}\mu(d)d\,\sigma_1(n/d).
\]
This function is multiplicative in \(n\).  At primes \(\ell\notin\calP\), its local factor is \(\sigma_1(\ell^a)\).  At primes \(p\in\calP\),
\[
 \sigma_1(p^a)-p\sigma_1(p^{a-1})=1\qquad(a\ge1).
\]
Thus \(\mathfrak a_{\calP}^{(N)}(n)=\sigma_1(n^\perp)\).  Formula \eqref{eq:mobius-sieve} follows from \eqref{eq:E-fourier} and \eqref{eq:kappa-mobius}.  Since \(|\calP|\) is even, \eqref{eq:mobius-lambert-fixed} is the special case \(T=\calP\) of Corollary \ref{cor:fixed-lambert}.
\end{proof}

The reciprocal product
\begin{equation}\label{eq:Y-H}
 Y_{\calH}(q)=X_{\calH}(q)^{-1}
\end{equation}
is the generating function for partitions in which a part \(m\) has
\[
 1+\#\{p\in\calP:p\mid m\}
\]
colours.  Although \(X_{\calH}Y_{\calH}=1\) is elementary, the following normalized ratio is a genuine modular unit.

\begin{proposition}\label{prop:U-m}
For every integer \(m\ge1\), define
\begin{equation}\label{eq:U-m-def}
 \mathcal U_m(\tau)=
 \prod_{h\in\calH}\frac{\Delta(mh\tau)}{\Delta(h\tau)}.
\end{equation}
Then \(\mathcal U_m\) is a modular unit on \(\Gamma_0(mN)\), and
\begin{equation}\label{eq:U-m-XY}
 \mathcal U_m(\tau)=
 q^{316(m-1)}
 \left(\frac{X_{\calH}(q^m)}{X_{\calH}(q)}\right)^{24}
 =q^{316(m-1)}
 \left(\frac{Y_{\calH}(q)}{Y_{\calH}(q^m)}\right)^{24}.
\end{equation}
\end{proposition}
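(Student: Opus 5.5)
The plan splits into two independent parts: the $q$-expansion identity \eqref{eq:U-m-XY}, which is purely formal, and the modularity of $\mathcal U_m$ on $\Gamma_0(mN)$.

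\emph{The $q$-expansion.} Write $\Delta(z)=e^{2\pi i z}\prod_{n\ge1}(1-e^{2\pi i nz})^{24}$. Then
\[
\Delta(h\tau)=q^h(q^h;q^h)_\infty^{24}
\quad\text{and}\quad
\Delta(mh\tau)=q^{mh}(q^{mh};q^{mh})_\infty^{24}.
\]
Taking the product over $h\in\calH$ and using $\sum_{h\in\calH}h=316$ gives the prefactor $q^{316(m-1)}$. The $(q;q)$-factors assemble into $X_{\calH}(q^m)^{24}/X_{\calH}(q)^{24}$. The second equality in \eqref{eq:U-m-XY} is then immediate from $Y_{\calH}=X_{\calH}^{-1}$.

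\emph{Modularity.} View $\mathcal U_m$ as a $\Delta$-quotient on the level $L=mN$. Every $mh$ and every $h$ with $h\in\calH$ divides $mN$, since $h\mid N$. Write
\[
\mathcal U_m=\prod_{h\in\calH}\Delta(mh\tau)\Delta(h\tau)^{-1}.
\]
Each factor $\Delta(d\tau)$ with $d\mid L$ satisfies $\Delta(d\gamma\tau)=(c\tau+d')^{12}\Delta(d\tau)$ for
\[
\gamma=\begin{pmatrix}a&b\\ c&d'\end{pmatrix}\in\Gamma_0(L).
\]
This holds because $\begin{pmatrix}a&db\\ c/d&d'\end{pmatrix}\in SL_2(\ZZ)$ and $\Delta$ is a level-one cusp form of weight $12$ with trivial character. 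Thus each $\Delta(d\tau)$ is a weight-$12$ form on $\Gamma_0(L)$. The quotient $\mathcal U_m$ has $9$ factors in the numerator and $9$ in the denominator, so it has weight $0$ and is invariant under $\Gamma_0(L)$. No eta multiplier arises, because we work with $\Delta=\eta^{24}$ rather than with $\eta$.

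It remains to show that $\mathcal U_m$ is a unit, i.e.\ that its divisor is supported on the cusps. Since $\Delta$ has no zeros in $\HH$, $\mathcal U_m$ is holomorphic and nonvanishing on $\HH$. At each cusp, every $\Delta(d\tau)$ has a finite-order expansion in the local parameter, so $\mathcal U_m$ is meromorphic there. Hence $\mathcal U_m$ is a modular unit on $X_0(mN)$.

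One caution: when $m$ shares primes with $N$, the values $mh$ may coincide with some $h'$ or repeat, and the level $mN$ is then not squarefree. This does not matter, because the argument uses only $d\mid mN$ and never invokes the squarefree Ligozat formula. There is therefore no real obstacle. The only point needing care is checking the divisibility $mh\mid mN$ for every $h\in\calH$, including $h=1$, which is immediate.
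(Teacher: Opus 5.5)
Your proposal is correct and follows essentially the same route as the paper: each $\Delta(d\tau)$ with $d\mid mN$ is a weight-twelve form on $\Gamma_0(mN)$, the nine numerator and nine denominator factors cancel the weight, $\Delta$ has no zeros in $\HH$, and the $q$-expansion comes from $\Delta(h\tau)=q^h(q^h;q^h)_\infty^{24}$ together with $\sum_{h\in\calH}h=316$. Your explicit matrix check for $\Delta(d\gamma\tau)=(c\tau+d')^{12}\Delta(d\tau)$ and your remark on meromorphy at the cusps just fill in details that the paper leaves implicit.
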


\begin{proof}
Each \(\Delta(d\tau)\) is a weight-twelve modular form on \(\Gamma_0(d)\), hence on \(\Gamma_0(mN)\).  In \eqref{eq:U-m-def} the numerator and denominator have the same number of factors, so the weights cancel.  Since \(\Delta\) has no zeros in \(\HH\), zeros and poles occur only at cusps.  Thus \(\mathcal U_m\) is a modular unit.  The identity \eqref{eq:U-m-XY} follows from
\[
 \Delta(h\tau)=q^h(q^h;q^h)_\infty^{24}.
\]
The second equality uses \(Y_{\calH}=X_{\calH}^{-1}\).
\end{proof}

\section{CM values and algebraic modular-unit relations}\label{sec:cm}

The modular units above have standard complex multiplication consequences.  We state only the algebraicity that is justified by the general theory; no rationality is asserted.

\begin{theorem}\label{thm:CM-algebraicity}
Let \(L\ge1\), and let \(f\) be a modular function for \(\Gamma_0(L)\) with algebraic Fourier coefficients at the cusp \(\infty\).  Let \(\tau\in\HH\) be imaginary quadratic, and assume that \(f\) is finite at \(\tau\).  Then \(f(\tau)\) is algebraic over \(\QQ\), that is,
\[
 f(\tau)\in\Qbar
 \qquad\text{(the algebraic closure of }\QQ\text{)}.
\]
\end{theorem}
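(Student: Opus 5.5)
The plan is to reduce to the classical CM theorem for the $j$-function, namely that $j(\tau)$ is algebraic (indeed an algebraic integer) for imaginary quadratic $\tau$, together with the algebraic structure of the function field of $X_0(L)$ over $\Qbar$. The function field $\CC(X_0(L))$ equals $\CC(j,j_L)$ with $j_L(\tau)=j(L\tau)$. The key input is the $q$-expansion principle for $\Gamma_0(L)$: the subfield of functions whose expansion at $\infty$ has coefficients in $\Qbar$ is exactly $\Qbar(j,j_L)$. Equivalently, $f$ is algebraic over $\Qbar(j)$, with minimal polynomial coefficients in $\Qbar(j)$.

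\textbf{Step 1: an algebraic relation for $f$ over $\Qbar(j)$.} Let $\gamma_1,\dots,\gamma_n$ be coset representatives for $\Gamma_0(L)\backslash \mathrm{SL}_2(\ZZ)$. Form
\[
 P(X)=\prod_{i=1}^n\bigl(X-f\circ\gamma_i\bigr).
\]
Its coefficients are symmetric functions of the $f\circ\gamma_i$, so they are invariant under $\mathrm{SL}_2(\ZZ)$, meromorphic at the cusp, and therefore rational functions of $j$. To see that these rational functions lie in $\Qbar(j)$, I will use that the expansions of the conjugates $f\circ\gamma_i$ at $\infty$ have coefficients in $\Qbar(\zeta_L)$. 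This is the standard fact that $\mathrm{SL}_2(\ZZ)$ acts on $\Qbar$-rational modular functions of level $L$ through $\mathrm{SL}_2(\ZZ/L\ZZ)$, preserving coefficients in $\Qbar(\zeta_L)(\!(q^{1/L})\!)$; see Shimura's theory of the field $\mathcal F_L$. A level-one function whose $q$-expansion has algebraic coefficients is a polynomial in $j$ with algebraic coefficients divided by such a polynomial. This follows by successively subtracting multiples of powers of $j$, after first multiplying by a suitable polynomial in $j$ to clear poles in $\HH$.

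\textbf{Step 2: specialization.} Write $P(X)=\sum_m c_m(j)X^m$ with $c_m\in\Qbar(j)$. Clearing denominators gives a polynomial $\widetilde P(X,Y)\in\Qbar[X,Y]$ with $\widetilde P(f,j)=0$ identically, and $\widetilde P$ is monic in $X$ up to a factor in $\Qbar[Y]$. Evaluate at the imaginary quadratic point $\tau$. By the CM theorem, $j(\tau)\in\Qbar$, so $f(\tau)$ is a root of $\widetilde P(X,j(\tau))\in\Qbar[X]$. This polynomial might vanish identically. In that case I would avoid the degeneracy by a denominator argument: replace $f$ by $f\cdot h(j)$ for a suitable $h\in\Qbar[Y]$ making all conjugates holomorphic on $\HH$. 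Then $P$ has coefficients in $\Qbar[j]$ and leading coefficient $1$, so the specialized polynomial is monic and nonzero. Since $f$ is finite at $\tau$ and $h(j(\tau))$ is algebraic, the product $f(\tau)h(j(\tau))$ being algebraic gives algebraicity of $f(\tau)$, provided $h(j(\tau))\neq0$. If it does vanish, I will choose $h$ to have no zero at $j(\tau)$. This is possible when no conjugate $f\circ\gamma_i$ has a pole at $\tau$, and otherwise I will handle it by working with $1/(f-a)$ for a generic algebraic $a$.

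\textbf{Main obstacle.} The hardest step is the rationality input in Step 1: showing that the symmetric functions of the conjugates have algebraic coefficients. This requires Shimura's result on the Galois/$\mathrm{SL}_2$ action on $\mathcal F_L$, and I would cite it rather than reprove it. The CM algebraicity of $j(\tau)$ is likewise quoted from \cite{Cox,ShimuraCM}.
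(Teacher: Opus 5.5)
Your argument is correct in outline, but it takes a different route from the paper. The paper gives no argument of its own. It passes from \(\Gamma_0(L)\) to a principal congruence subgroup and invokes the congruence-level algebraicity theorem of complex multiplication (Shimura, Cox) as a black box.

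You instead reduce everything to the single classical fact \(j(\tau)\in\Qbar\), by forming \(P(X)=\prod_i(X-f\circ\gamma_i)\) over \(\Qbar(j)\) and specializing at \(\tau\). Your only other input is a \(q\)-expansion statement, namely that the conjugates \(f\circ\gamma_i\) again have algebraic coefficients; this is not a CM statement. What this buys is transparency. It shows exactly which CM input is used, and it exposes a degeneracy the citation hides: a conjugate \(f\circ\gamma_i\) can have a pole at \(\tau\) even when \(f\) does not. Your device handles this correctly. Put \(g=1/(f-a)\) with \(a\in\Qbar\) different from the finitely many finite values \(f(\gamma_i\tau)\). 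Then every conjugate of \(g\) is finite at \(\tau\), \(g\) still has algebraic coefficients, and \(f(\tau)=a+1/g(\tau)\).

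Two repairs are needed.

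First, your justification that a level-one function \(g\) with algebraic \(q\)-coefficients lies in \(\Qbar(j)\) is circular as written. The polynomial in \(j\) that clears the poles of \(g\) in \(\HH\) has algebraic coefficients only if \(j\) takes algebraic values at those poles, which is part of what is being proved. With a complex clearing polynomial, the subtraction argument only gives \(g\in\CC(j)\). Replace it by descent. For bounded degrees, the condition \(g\,B(j)=A(j)\) is a linear system in the coefficients of \(A\) and \(B\), with coefficients in \(\Qbar\). A nonzero complex solution therefore forces a nonzero solution over \(\Qbar\), necessarily with \(B\ne0\). The same descent is what lets you pass, in Step 1, from Shimura's field \(\mathcal F_L\) (coefficients in \(\QQ(\zeta_L)\)) to functions with \(\Qbar\)-coefficients.

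Second, the auxiliary factor \(h(j)\) in Step 2 is unnecessary. Once every conjugate is finite at \(\tau\), each coefficient \(c_m(j)\) is a symmetric function of the finite values \(f(\gamma_i\tau)\), so it is finite at \(\tau\). Hence \(c_m\in\Qbar(j)\), written in lowest terms, has denominator nonvanishing at \(j(\tau)\in\Qbar\). So \(P\) specializes directly to a monic polynomial in \(\Qbar[X]\) with root \(f(\tau)\).
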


\begin{proof}
This is the algebraicity part of the main theorem of complex multiplication for modular functions of congruence level.  Passing from \(\Gamma_0(L)\) to a principal congruence subgroup of sufficiently large level reduces the assertion to the usual theorem on singular values of modular functions with algebraic Fourier coefficients.  See Shimura's treatment of complex multiplication \cite{ShimuraCM} or Cox's exposition \cite[Chapters 10--11]{Cox}.
\end{proof}

\begin{remark}
Shimura reciprocity gives a sharper class-field description of \(f(\tau)\) in terms of the order associated with \(\tau\) and the level.  We do not need that refinement here, and no rationality assertion is made.
\end{remark}

\begin{corollary}\label{cor:CM-units}
Let \(\tau\in\HH\) be imaginary quadratic.  For \(T\ne\varnothing\), the values of the genuine modular units are algebraic numbers; explicitly,
\[
 \bigl(R_T^{(M)}(\tau)\bigr)^{24}\in\Qbar
 \qquad\text{(algebraic over }\QQ\text{)}.
\]
For the Heegner product, for every \(m\ge1\), the modular-unit ratios are algebraic numbers:
\[
 q^{316(m-1)}
 \left(\frac{X_{\calH}(q^m)}{X_{\calH}(q)}\right)^{24}
 =q^{316(m-1)}
 \left(\frac{Y_{\calH}(q)}{Y_{\calH}(q^m)}\right)^{24}
 \in\Qbar.
\]
Here \(\Qbar\) denotes the algebraic closure of \(\QQ\); no rationality assertion is made.
\end{corollary}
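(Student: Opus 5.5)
The plan is to deduce Corollary \ref{cor:CM-units} directly from Theorem \ref{thm:CM-algebraicity}. For each function appearing in the statement, we check three things: it is a modular function on some \(\Gamma_0(L)\), its Fourier coefficients at \(\infty\) are algebraic (in fact rational integers), and it is finite at the imaginary quadratic point \(\tau\).

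For the Walsh units, take \(L=M\) and \(f=(R_T^{(M)})^{24}=\prod_{d\mid M}\Delta(d\tau)^{\chi_T(d)}\) as in \eqref{eq:RT-24}. This is a modular unit on \(X_0(M)\) by Theorem \ref{thm:main-structural}(i), or directly because the weights cancel since \(T\ne\varnothing\). Its \(q\)-expansion is obtained by raising \eqref{eq:RT-q-product} to the 24th power:
\[
 q^{24\kappa_T^{(M)}}\prod_{m\ge1}(1-q^m)^{24\alpha_T^{(M)}(m)}.
\]
Here \(24\kappa_T^{(M)}=\prod_{p\mid M}(1+\chi_T(p)p)\in\ZZ\) and \(\alpha_T^{(M)}(m)\in\ZZ\), so the expansion is a Laurent series in \(q\) with integer coefficients.

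Finiteness at \(\tau\in\HH\) follows because \(\Delta\) has no zeros or poles in \(\HH\). Each \(\Delta(d\tau)^{\pm1}\) is therefore holomorphic and nonvanishing at \(\tau\), and so is \(f\). Theorem \ref{thm:CM-algebraicity} then gives \(f(\tau)\in\Qbar\).

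For the Heegner ratios, take \(L=mN\) and \(f=\mathcal U_m\). By Proposition \ref{prop:U-m} this is a modular unit on \(\Gamma_0(mN)\). Its \(q\)-expansion
\[
 q^{316(m-1)}\bigl(X_{\calH}(q^m)/X_{\calH}(q)\bigr)^{24}
\]
has integer coefficients, since \(X_{\calH}(q)\) is a power series with constant term \(1\) and integer coefficients, so its inverse is too. Finiteness at \(\tau\) again follows from the nonvanishing of \(\Delta\) on \(\HH\). Applying Theorem \ref{thm:CM-algebraicity} and the identity \eqref{eq:U-m-XY} gives both displayed forms of the algebraic value.

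The only point needing care is the interpretation of the displayed expression. Here \(q\) means \(e^{2\pi i\tau}\) at the CM point, and the left side is literally the value \(\mathcal U_m(\tau)\); it is not an expression in a transcendental \(q\) that might seem problematic on its own. The equality in \eqref{eq:U-m-XY} holds pointwise on \(\HH\) because the product converges for \(|q|<1\). Beyond this, the argument is a routine verification of the hypotheses.
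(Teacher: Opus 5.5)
Your proposal is correct and takes the same route as the paper. The paper applies Theorem \ref{thm:CM-algebraicity} to $(R_T^{(M)})^{24}$ on $\Gamma_0(M)$, and, via Proposition \ref{prop:U-m}, to $\mathcal U_m$ on $\Gamma_0(mN)$. Your checks that both $q$-expansions have integer coefficients and that both functions are finite (indeed nonvanishing) at $\tau$ spell out hypotheses the paper's two-line proof leaves implicit.
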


\begin{proof}
The first assertion follows from Theorem \ref{thm:CM-algebraicity} applied to the modular unit \((R_T^{(M)})^{24}\) on \(\Gamma_0(M)\).  The second follows from Proposition \ref{prop:U-m} and Theorem \ref{thm:CM-algebraicity}, applied to \(\mathcal U_m\) on \(\Gamma_0(mN)\).
\end{proof}

\section{Fricke-fixed logarithmic derivative identities}\label{sec:pi}

We now give the fixed-point identities for reciprocal eta-products.  For \(a,t>0\), the eta transformation gives
\begin{equation}\label{eq:one-factor-real}
 (e^{-at};e^{-at})_\infty^{-1}
 =\left(\frac{at}{2\pi}\right)^{1/2}
 \exp\left(\frac{\pi^2}{6at}-\frac{at}{24}\right)
 (e^{-4\pi^2/(at)};e^{-4\pi^2/(at)})_\infty^{-1}.
\end{equation}
This is simply \eqref{eq:eta-inversion} on the positive imaginary axis.

Let \(M>1\) be squarefree.  Let \((b_d)_{d\mid M}\) be non-negative integers, not all zero, satisfying
\begin{equation}\label{eq:self-dual-b}
 b_d=b_{M/d}.
\end{equation}
Define
\begin{equation}\label{eq:Zb-def}
 Z_b(t)=\prod_{d\mid M}(e^{-dt};e^{-dt})_\infty^{-b_d}
 =\sum_{n\ge0}G_b(n)e^{-nt},
\end{equation}
and put
\begin{equation}\label{eq:B-S-b}
 B_b=\sum_{d\mid M}b_d,
 \qquad
 S_b=\sum_{d\mid M}b_dd,
 \qquad
 t_M=\frac{2\pi}{\sqrt M}.
\end{equation}
By assumption \(B_b>0\), so the denominator in the fixed-point formula below is non-zero.

\begin{theorem}\label{thm:self-dual-pi}
With the notation above, so that \(B_b>0\),
\begin{equation}\label{eq:self-dual-pi}
 \frac1\pi
 =\frac{8}{B_b\sqrt M}
 \left(
 \frac{S_b}{24}
 -
 \frac{\sum_{n\ge0}nG_b(n)e^{-nt_M}}
 {\sum_{n\ge0}G_b(n)e^{-nt_M}}
 \right).
\end{equation}
\end{theorem}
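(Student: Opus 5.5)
We will realize \(Z_b\) as a reciprocal eta-product evaluated on the imaginary axis and differentiate its Fricke functional equation at the fixed point \(\tau_M=i/\sqrt M\), in the same way as Corollary \ref{cor:fixed-lambert}. The one new ingredient is that the eta-product now has nonzero weight, so the automorphy factor contributes a \(1/\pi\) term.

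\textbf{Step 1: set up the eta-product.} Put \(q=e^{2\pi i\tau}\) with \(\tau=it/(2\pi)\), so that \(e^{-dt}=q^d\). Define
\[
 F_b(\tau)=\prod_{d\mid M}\eta(d\tau)^{b_d}.
\]
Then \(F_b(\tau)=q^{S_b/24}Z_b(t)^{-1}\), and \(t=t_M\) corresponds to \(\tau=\tau_M\).

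\textbf{Step 2: the Fricke law for \(F_b\).} Apply \eqref{eq:eta-inversion} factor by factor, exactly as in the proof of Theorem \ref{thm:layer-fricke}. Each factor \(\eta(d\cdot(-1/(M\tau)))\) becomes \((-iM\tau/d)^{1/2}\eta((M/d)\tau)\). Taking the product over \(d\mid M\), reindexing by \(e=M/d\), and using the self-duality \eqref{eq:self-dual-b} gives
\[
 F_b(W_M\tau)=C\,(-i\tau)^{B_b/2}F_b(\tau)
\]
for a positive constant \(C\).

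\textbf{Step 3: differentiate at the fixed point.} Take logarithmic derivatives in \(\tau\). This needs no branch discussion, since everything is positive on the imaginary axis near \(\tau_M\). The derivative of \(W_M\tau\) is \(1/(M\tau^2)\), which equals \(-1\) at \(\tau_M\). Writing \(L=(\log F_b)'(\tau_M)\), we obtain
\[
 -L=\frac{B_b}{2\tau_M}+L,
 \qquad\text{so}\qquad
 L=-\frac{B_b}{4\tau_M}=\frac{iB_b\sqrt M}{4}.
\]
Since \(D=\frac{1}{2\pi i}\frac{d}{d\tau}\), this becomes
\[
 D\log F_b(\tau_M)=\frac{B_b\sqrt M}{8\pi}.
\]

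\textbf{Step 4: compare with the \(q\)-expansion.} From Step 1,
\[
 D\log F_b=\frac{S_b}{24}-\frac{D Z_b}{Z_b},
\]
where \(Z_b\) is viewed as \(\sum_n G_b(n)q^n\), so that
\[
 D Z_b=\sum_{n\ge0} nG_b(n)q^n .
\]
Set \(q=e^{-t_M}\). Equating with Step 3 gives
\[
 \frac{B_b\sqrt M}{8\pi}=\frac{S_b}{24}-\frac{\sum_{n\ge0} nG_b(n)e^{-nt_M}}{\sum_{n\ge0} G_b(n)e^{-nt_M}}.
\]
Multiplying by \(8/(B_b\sqrt M)\), which is allowed because \(B_b>0\), gives \eqref{eq:self-dual-pi}.

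The main point requiring care is Step 2: the weight \(B_b/2\) may be half-integral, and one must confirm that the self-duality \(b_d=b_{M/d}\) really returns \(F_b(\tau)\), not a different product. Both issues are handled by working only on the positive imaginary axis, where every factor is positive and the logarithms are real-analytic. Only the \(\tau\)-dependence \((-i\tau)^{B_b/2}\) enters the derivative, so the value of \(C\) is never needed.
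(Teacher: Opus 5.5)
Your proof is correct and is essentially the paper's argument. The paper multiplies the real eta inversion over $d\mid M$ to get $Z_b(t)=K_b(t)\,Z_b\bigl(4\pi^2/(Mt)\bigr)$ and log-differentiates at $t_M$; this is your Fricke law for $F_b$, with the normalization $q^{S_b/24}$ carried inside the exponential factor of $K_b$ instead of being split off as in your Step 4. (Incidentally, since $d\neq M/d$ for squarefree $M>1$, self-duality forces $B_b$ to be even, so the half-integral-weight concern does not arise.)
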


\begin{proof}
Multiplying \eqref{eq:one-factor-real} with exponent \(b_d\) over all \(d\mid M\) gives
\begin{equation}\label{eq:Zb-functional}
 Z_b(t)=K_b(t)Z_b\left(\frac{4\pi^2}{Mt}\right),
\end{equation}
where, using the self-duality condition \eqref{eq:self-dual-b},
\begin{equation}\label{eq:Kb}
 K_b(t)=M^{B_b/4}
 \left(\frac{t}{2\pi}\right)^{B_b/2}
 \exp\left(\frac{\pi^2S_b}{6Mt}-\frac{S_bt}{24}\right).
\end{equation}
Indeed, \(\prod d^{b_d}=M^{B_b/2}\) and \(\sum b_d/d=S_b/M\).  The map \(t\mapsto 4\pi^2/(Mt)\) fixes \(t_M\) and has derivative \(-1\) there.  Taking logarithmic derivatives in \eqref{eq:Zb-functional} at \(t=t_M\) yields
\[
 2\frac{Z_b'(t_M)}{Z_b(t_M)}=\frac{K_b'(t_M)}{K_b(t_M)}.
\]
From \eqref{eq:Kb},
\[
 \frac{K_b'(t)}{K_b(t)}=\frac{B_b}{2t}
 -\frac{\pi^2S_b}{6Mt^2}-\frac{S_b}{24},
\]
and hence
\[
 \frac{Z_b'(t_M)}{Z_b(t_M)}=\frac{B_b\sqrt M}{8\pi}-\frac{S_b}{24}.
\]
Since \(Z_b'(t_M)/Z_b(t_M)
=-\sum nG_b(n)e^{-nt_M}/\sum G_b(n)e^{-nt_M}\), solving for \(1/\pi\) gives \eqref{eq:self-dual-pi}.
\end{proof}

\begin{proposition}\label{prop:E2star-pi}
Let
\[
 E_2^*(\tau)=E_2(\tau)-\frac{3}{\pi\operatorname{Im}\tau}.
\]
For a self-dual vector \(b=(b_d)_{d\mid M}\) as above, Theorem \ref{thm:self-dual-pi} is equivalent to the completed fixed-point identity
\begin{equation}\label{eq:E2star-fixed}
 \sum_{d\mid M}b_d d\,E_2^*(d\tau_M)=0,
 \qquad
 \tau_M=\frac{i}{\sqrt M}.
\end{equation}
Thus the occurrence of \(1/\pi\) in \eqref{eq:self-dual-pi} is precisely the anomalous term in the transformation law of \(E_2\).
\end{proposition}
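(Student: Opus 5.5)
We prove the equivalence by evaluating $E_2^*$ at the fixed point $\tau_M$ and expressing everything through the logarithmic derivative of $Z_b$ at $t_M$, which is the quantity isolated in the proof of Theorem \ref{thm:self-dual-pi}.

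First we translate between the $t$-variable and the $\tau$-variable. On the imaginary axis $\tau=it/(2\pi)$, so $q=e^{-t}$, and $d\tau_M$ corresponds to $q^d=e^{-dt_M}$. We write
\[
 \log Z_b(t)=-\sum_{d\mid M}b_d\log(e^{-dt};e^{-dt})_\infty .
\]
Differentiating in $t$ and using $-D\log(q;q)_\infty=(E_2-1)/24$ at $q=e^{-dt}$, we obtain
\[
 \frac{Z_b'(t)}{Z_b(t)}=\frac{1}{24}\sum_{d\mid M}b_d d\bigl(E_2(d\tau)-1\bigr),\qquad \tau=\frac{it}{2\pi}.
\]
The sign is fixed by $d/dt=-D$ acting on $q=e^{-t}$, and by $D\log(q^d;q^d)_\infty=d\,(1-E_2(d\tau))/24$. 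Hence
\[
 \frac{\sum_n nG_b(n)e^{-nt_M}}{\sum_n G_b(n)e^{-nt_M}}=-\frac{Z_b'(t_M)}{Z_b(t_M)}=\frac{S_b}{24}-\frac1{24}\sum_{d\mid M}b_d d\,E_2(d\tau_M).
\]
Substituting into \eqref{eq:self-dual-pi}, the $S_b/24$ terms cancel, and \eqref{eq:self-dual-pi} becomes
\[
 \frac1\pi=\frac{1}{3B_b\sqrt M}\sum_{d\mid M}b_d d\,E_2(d\tau_M),
\]
that is, $\sum_{d\mid M} b_d d\,E_2(d\tau_M)=3B_b\sqrt M/\pi$. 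Every step here is a reversible algebraic manipulation, and $B_b>0$ allows division.

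Next we compute the anomaly term. Since $\operatorname{Im}(d\tau_M)=d/\sqrt M$, each summand carries
\[
 b_d d\cdot\frac{3}{\pi d/\sqrt M}=\frac{3\sqrt M}{\pi}b_d .
\]
Summing over $d\mid M$ gives exactly $3B_b\sqrt M/\pi$. Therefore
\[
 \sum_{d\mid M} b_d d\,E_2^*(d\tau_M)=\sum_{d\mid M} b_d d\,E_2(d\tau_M)-\frac{3B_b\sqrt M}{\pi},
\]
and this vanishes if and only if the reformulated \eqref{eq:self-dual-pi} holds. This proves the equivalence. The anomalous term in \eqref{eq:E2star-fixed} matches the $1/\pi$ term in \eqref{eq:self-dual-pi} exactly.

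As an independent check, \eqref{eq:E2star-fixed} can also be derived directly. The completed form satisfies $E_2^*(-1/z)=z^2E_2^*(z)$. Applying this with $z=(M/d)\tau_M$ gives $-1/z=d\tau_M$ and $z^2=-M/d^2$, so $dE_2^*(d\tau_M)=-(M/d)E_2^*((M/d)\tau_M)$. Pairing $d$ with $M/d$ and using $b_d=b_{M/d}$, the sum equals its own negative and hence vanishes.

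The main obstacle is sign and normalization bookkeeping: the direction of $d/dt$ versus $D$, and the factor relating the $(q;q)_\infty$ logarithmic derivative to $E_2-1$. We will verify this normalization on the single case $M=1$, $b_1=1$, where the identity reduces to the classical value $E_2(i)=3/\pi$.
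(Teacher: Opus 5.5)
Your proof is correct and follows the paper's argument: you rewrite the quotient at $t_M$ as $S_b/24-\frac1{24}\sum_{d\mid M} b_d d\,E_2(d\tau_M)$, reduce \eqref{eq:self-dual-pi} to $\sum_{d\mid M} b_d d\,E_2(d\tau_M)=3B_b\sqrt M/\pi$, and match this with the anomaly term $\sum_{d\mid M} b_d d\cdot 3/(\pi\operatorname{Im}(d\tau_M))$; your extra direct derivation of \eqref{eq:E2star-fixed} from $E_2^*(-1/z)=z^2E_2^*(z)$ and $b_d=b_{M/d}$ is a valid bonus. One small slip: the correct normalization is $D\log(q;q)_\infty=(E_2-1)/24$, hence $D\log(q^d;q^d)_\infty=d(E_2(d\tau)-1)/24$, and both of your side remarks state these with the sign flipped, although your displayed formula for $Z_b'/Z_b$ is nevertheless correct.
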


\begin{proof}
For \(\tau=it/(2\pi)\), the quotient in \eqref{eq:self-dual-pi} is
\[
 \frac{\sum_{n\ge0}nG_b(n)e^{-nt}}{\sum_{n\ge0}G_b(n)e^{-nt}}
 =\frac{S_b}{24}-\frac1{24}\sum_{d\mid M}b_d dE_2(d\tau).
\]
At \(t=t_M\), Theorem \ref{thm:self-dual-pi} is therefore equivalent to
\[
 \sum_{d\mid M}b_d dE_2(d\tau_M)=\frac{3B_b\sqrt M}{\pi}.
\]
Since \(\operatorname{Im}(d\tau_M)=d/\sqrt M\), we have
\[
 \sum_{d\mid M}b_d d\,\frac{3}{\pi\operatorname{Im}(d\tau_M)}
 =\sum_{d\mid M}b_d\frac{3\sqrt M}{\pi}
 =\frac{3B_b\sqrt M}{\pi}.
\]
Subtracting this term from the previous display gives \eqref{eq:E2star-fixed}.  Conversely, reversing the argument recovers \eqref{eq:self-dual-pi}.
\end{proof}

\begin{lemma}\label{lem:quotient-truncation}
Let \(G(n)\ge0\), \(0<q<1\), and suppose that the sums
\[
 S=\sum_{n\ge0}G(n)q^n,\qquad N_1=\sum_{n\ge0}nG(n)q^n
\]
converge.  For \(K\ge0\), put
\[
 S_K=\sum_{n=0}^{K}G(n)q^n,\qquad
 N_K=\sum_{n=0}^{K}nG(n)q^n,
\]
and let \(R_K=S-S_K\), \(V_K=N_1-N_K\).  If \(S_K>0\), then
\begin{equation}\label{eq:tail-bound}
 \left|\frac{N_1}{S}-\frac{N_K}{S_K}\right|
 \le
 \frac{V_KS_K+N_KR_K}{S_K(S_K+R_K)}.
\end{equation}
\end{lemma}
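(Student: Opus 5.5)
This is an elementary estimate, and the plan is to compute the difference of the two quotients exactly and then bound it by nonnegativity. First note that \(S=S_K+R_K\) and \(N_1=N_K+V_K\). Since \(G(n)\ge0\) and \(0<q<1\), all four quantities \(S_K,R_K,N_K,V_K\) are nonnegative; \(R_K\) and \(V_K\) are tails of convergent series with nonnegative terms. Since \(S_K>0\), we also have \(S=S_K+R_K>0\), so both quotients are defined.

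Next, put the difference over the common denominator \(S S_K=S_K(S_K+R_K)\):
\[
 \frac{N_1}{S}-\frac{N_K}{S_K}
 =\frac{(N_K+V_K)S_K-N_K(S_K+R_K)}{S_K(S_K+R_K)}
 =\frac{V_KS_K-N_KR_K}{S_K(S_K+R_K)}.
\]

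Finally, apply the triangle inequality to the numerator, using nonnegativity:
\[
|V_KS_K-N_KR_K|\le V_KS_K+N_KR_K.
\]
The denominator is positive, so this gives \eqref{eq:tail-bound}.

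There is no real obstacle. The only point needing care is to record explicitly that each of \(S_K,R_K,N_K,V_K\) is nonnegative and that \(S>0\). One could also observe the sharper bound \(\max(V_KS_K,N_KR_K)/(S_KS)\), since the numerator is a difference of two nonnegative terms, but the stated sum bound is all that is required.
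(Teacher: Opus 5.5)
Your proof is correct and follows the same route as the paper: you put the difference over the common denominator $S_K(S_K+R_K)$ to get the numerator $V_KS_K-N_KR_K$, then bound it using nonnegativity. You also make explicit that $S_K,R_K,N_K,V_K\ge0$ and $S\ge S_K>0$, which the paper leaves implicit.
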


\begin{proof}
A direct subtraction gives
\[
 \frac{N_1}{S}-\frac{N_K}{S_K}
 =
 \frac{(N_K+V_K)S_K-N_K(S_K+R_K)}
 {S_K(S_K+R_K)}
 =
 \frac{V_KS_K-N_KR_K}{S_K(S_K+R_K)}.
\]
Taking absolute values and using non-negativity gives \eqref{eq:tail-bound}.
\end{proof}

\begin{example}\label{ex:M6-pi}
For \(M=6\), take \(b_d=1\) for all \(d\mid6\).  Then
\[
 Z_6(t)=\prod_{d\mid6}(e^{-dt};e^{-dt})_\infty^{-1},
 \qquad
 t_6=\frac{2\pi}{\sqrt6},
\]
\(B_b=4\), and \(S_b=12\).  If \(Z_6(t)=\sum_{n\ge0}G_6(n)e^{-nt}\), then
\begin{equation}\label{eq:M6-pi}
 \frac1\pi=\frac{2}{\sqrt6}
 \left(
 \frac12-
 \frac{\sum_{n\ge0}nG_6(n)e^{-2\pi n/\sqrt6}}
 {\sum_{n\ge0}G_6(n)e^{-2\pi n/\sqrt6}}
 \right).
\end{equation}
Here \(e^{-2\pi/\sqrt6}=0.0769115160\ldots\), so this small-level model is numerically efficient.  If the quotient in \eqref{eq:M6-pi} is truncated after \(n\le K\), direct coefficient extraction gives the following values.
\[
\begin{array}{c|c|c}
K & \text{truncated value of }1/\pi & \text{absolute error}\\ \hline
5  & 0.3183412295304878 & 3.13\cdot10^{-5}\\
10 & 0.3183098875395577 & 1.36\cdot10^{-9}\\
20 & 0.318309886183790672 & 6.98\cdot10^{-19}
\end{array}
\]
The full Heegner-level identity below is structurally analogous but not efficient in its direct coefficient expansion.  The displayed errors were certified by Lemma~\ref{lem:quotient-truncation}; for example, if \(q=e^{-2\pi/\sqrt6}\) and \(q<r<1\), then
\[
 R_K\le \left(\frac qr\right)^{K+1}Z_6(-\log r),
 \qquad
 V_K\le \left(\frac qr\right)^{K+1}r\frac{d}{dr}Z_6(-\log r),
\]
and the products at \(r=1/2\) give the stated rigorous tail bounds.
\end{example}

The full Heegner identity below is not intended as a fast coefficient series for \(1/\pi\); its role is to exhibit the Fricke-fixed logarithmic derivative mechanism at the Heegner boundary.  Computational acceleration comes from pairing \(h\) with \(N/h\), as in \eqref{eq:paired-acceleration}.

For the Heegner boundary application, take \(M=N\) and
\[
 b_d=
 \begin{cases}
 1,& d\in\calH\cup\calH^*,\\
 0,& \text{otherwise}.
 \end{cases}
\]
Then \(B_b=18\) and \(S_b=S+S^*=8920582295\).  If
\[
 Z_{\calH\cup\calH^*}(t)=\sum_{n\ge0}G(n)e^{-nt},
\]
Theorem \ref{thm:self-dual-pi} gives
\begin{equation}\label{eq:Heegner-pi}
 \frac1\pi
 =\frac{4}{9\sqrt N}
 \left(
 \frac{S+S^*}{24}
 -
 \frac{\sum_{n\ge0}nG(n)e^{-2\pi n/\sqrt N}}
 {\sum_{n\ge0}G(n)e^{-2\pi n/\sqrt N}}
 \right).
\end{equation}
Because
\[
 e^{-2\pi/\sqrt N}=0.9999021422111655\ldots,
\]
this is not a fast coefficient series and should not be compared with classical Ramanujan--Sato or Chudnovsky-type rapidly convergent series.  Its meaning is Fricke-fixed and modular: the logarithmic derivative of a self-dual eta-product isolates the anomalous \(1/\pi\) term.  Equivalently, the same phenomenon may be read through the non-holomorphic completion \(E_2^*(\tau)=E_2(\tau)-3/(\pi\operatorname{Im}\tau)\), whose modularity accounts for the appearance of \(1/\pi\); see also the standard quasimodular perspective in \cite{KanekoZagier}.

There is nevertheless a stable product representation at the same fixed point.  Let
\[
 P_a(t)=(e^{-at};e^{-at})_\infty^{-1},
 \qquad t_0=\frac{2\pi}{\sqrt N}.
\]
For \(h\in\calH\), formula \eqref{eq:one-factor-real} gives
\begin{equation}\label{eq:paired-acceleration}
 P_h(t_0)=
 \left(\frac{h}{\sqrt N}\right)^{1/2}
 \exp\left(\frac{\pi\sqrt N}{12h}-\frac{\pi h}{12\sqrt N}\right)
 P_{N/h}(t_0).
\end{equation}
Therefore
\begin{equation}\label{eq:accelerated-product}
 \prod_{d\in\calH\cup\calH^*}P_d(t_0)
 =\prod_{h\in\calH}
 \left(\frac{h}{\sqrt N}\right)^{1/2}
 \exp\left(\frac{\pi\sqrt N}{12h}-\frac{\pi h}{12\sqrt N}\right)
 P_{N/h}(t_0)^2.
\end{equation}
The remaining product parameters are \(e^{-2\pi\sqrt N/h}\).  The slowest one occurs at \(h=163\) and has exponent
\[
 \frac{2\pi\sqrt N}{163}=2474.887621883438\ldots,
\]
so \eqref{eq:accelerated-product} is an exact rapidly convergent fixed-point product even though the coefficient expansion in \eqref{eq:Heegner-pi} is slow.

The convergence assertion for the paired product is certified by elementary estimates.  Put
\[
 Q_h=e^{-2\pi\sqrt N/h}.
\]
For \(a=N/h\) and \(t=t_0\), one has
\begin{align}
 \left|\log P_{N/h}(t_0)\right|
 &\le \sum_{n\ge1}\frac{Q_h^n}{1-Q_h^n}
 \le \frac{Q_h}{(1-Q_h)^2},\label{eq:product-tail-log}\\
 \left|\left.\frac{d}{dt}\log P_{N/h}(t)\right|_{t=t_0}\right|
 &\le \frac{N}{h}\sum_{n\ge1}\frac{nQ_h^n}{1-Q_h^n}
 \le \frac{N}{h}\frac{Q_h}{(1-Q_h)^3}.
 \label{eq:product-tail-derivative}
\end{align}
Since \(Q_h\le e^{-2474.887621883438}<10^{-1074}\) for all \(h\in\calH\), the total contribution of the residual factors \(P_{N/h}(t_0)^2\) to the logarithm in \eqref{eq:accelerated-product} is less than \(2\cdot10^{-1073}\).  Separate residual derivatives satisfy \eqref{eq:product-tail-derivative}, but in the paired logarithmic derivative the residual derivative terms cancel exactly, as the next proposition shows.  Thus the paired representation gives a certified accelerated evaluation of the fixed-point product, while the fixed-point logarithmic derivative is obtained exactly by pairing.

\begin{proposition}\label{prop:paired-derivative}
For every \(h\in\calH\),
\begin{equation}\label{eq:paired-derivative}
 \left.\frac{d}{dt}\log\bigl(P_h(t)P_{N/h}(t)\bigr)\right|_{t=t_0}
 =
 \frac{\sqrt N}{4\pi}-\frac{h+N/h}{24}.
\end{equation}
Consequently,
\begin{equation}\label{eq:paired-derivative-sum}
 -\frac{Z'_{\calH\cup\calH^*}(t_0)}{Z_{\calH\cup\calH^*}(t_0)}
 =
 \frac{S+S^*}{24}-\frac{9\sqrt N}{4\pi},
\end{equation}
and this identity is equivalent to the Heegner fixed-point formula \eqref{eq:Heegner-pi}.
\end{proposition}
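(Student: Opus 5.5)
The plan is to differentiate the one-factor paired relation \eqref{eq:paired-acceleration} as a functional equation in $t$, rather than only at the point $t_0$. By \eqref{eq:one-factor-real} with $a=h$,
\[
 \log P_h(t)=\tfrac12\log\frac{ht}{2\pi}+\frac{\pi^2}{6ht}-\frac{ht}{24}+\log P_h\!\left(\frac{4\pi^2}{ht}\right).
\]
Since $P_h(4\pi^2/(ht))=P_{N/h}(4\pi^2/(Nt))$, the last term is $\log P_{N/h}(\sigma(t))$, where $\sigma(t)=4\pi^2/(Nt)$ is the involution fixing $t_0$ with $\sigma'(t_0)=-1$.

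Adding $\log P_{N/h}(t)$ to both sides gives
\[
 \log\bigl(P_hP_{N/h}\bigr)(t)=\tfrac12\log\frac{ht}{2\pi}+\frac{\pi^2}{6ht}-\frac{ht}{24}+\log P_{N/h}(\sigma(t))+\log P_{N/h}(t).
\]
We differentiate at $t=t_0$. By the chain rule the two residual terms contribute $\bigl(\sigma'(t_0)+1\bigr)(\log P_{N/h})'(t_0)=0$. This is the exact cancellation promised before the proposition, and the bound \eqref{eq:product-tail-derivative} is never needed.

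The remaining elementary derivative is
\[
 \frac1{2t_0}-\frac{\pi^2}{6ht_0^2}-\frac{h}{24}.
\]
With $t_0=2\pi/\sqrt N$ we have $1/(2t_0)=\sqrt N/(4\pi)$ and $\pi^2/(6ht_0^2)=N/(24h)$. Hence the derivative equals $\sqrt N/(4\pi)-(h+N/h)/24$, which is \eqref{eq:paired-derivative}. The only care needed is bookkeeping of which factor is transformed, so that the same function $P_{N/h}$ appears both at $t$ and at $\sigma(t)$.

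For \eqref{eq:paired-derivative-sum}, note that $\calH\cup\calH^*$ is the disjoint union of the nine pairs $\{h,N/h\}$ with $h\in\calH$. These are distinct because $N$ is not a square and $h\le163<N/h$. Therefore $\log Z_{\calH\cup\calH^*}=\sum_{h\in\calH}\log(P_hP_{N/h})$. Summing \eqref{eq:paired-derivative} over the nine values of $h$ gives $Z'/Z(t_0)=9\sqrt N/(4\pi)-(S+S^*)/24$, and changing sign yields \eqref{eq:paired-derivative-sum}.

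Finally, $-Z'/Z(t_0)$ equals the quotient $\sum nG(n)e^{-nt_0}/\sum G(n)e^{-nt_0}$ by termwise differentiation, which is justified by absolute convergence on $t>0$. Substituting, \eqref{eq:paired-derivative-sum} says that the bracket in \eqref{eq:Heegner-pi} equals $9\sqrt N/(4\pi)$. Multiplying by $4/(9\sqrt N)$ gives $1/\pi$, and each step is reversible, so the two identities are equivalent. The same result is the specialization $B_b=18$ of Theorem \ref{thm:self-dual-pi}, which serves as a consistency check.
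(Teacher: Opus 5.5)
Your argument is correct and is essentially the paper's proof. You apply \eqref{eq:one-factor-real} to $P_h$ so that the residual factor is $P_{N/h}(4\pi^2/(Nt))$, and use $\sigma(t_0)=t_0$, $\sigma'(t_0)=-1$ to cancel its derivative against $(\log P_{N/h})'(t_0)$. You then sum over the nine disjoint pairs $\{h,N/h\}$ and rearrange as in Theorem~\ref{thm:self-dual-pi}.

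One bookkeeping slip needs fixing. With $a=h$, the transformed factor in \eqref{eq:one-factor-real} is $P_1(4\pi^2/(ht))$, not $P_h(4\pi^2/(ht))$; the latter equals $(e^{-4\pi^2/t};e^{-4\pi^2/t})_\infty^{-1}$. So your intermediate identity $P_h(4\pi^2/(ht))=P_{N/h}(4\pi^2/(Nt))$ is false for $h\ne1$. With $P_1$ in place of $P_h$ both of those lines become true, and the functional equation you actually differentiate is correct, so nothing downstream changes.
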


\begin{proof}
Formula \eqref{eq:one-factor-real} gives
\[
 P_h(t)=\left(\frac{ht}{2\pi}\right)^{1/2}
 \exp\left(\frac{\pi^2}{6ht}-\frac{ht}{24}\right)
 P_{N/h}\!\left(\frac{4\pi^2}{Nt}\right).
\]
Taking the logarithmic derivative and putting \(t=t_0\), where \(4\pi^2/(Nt_0)=t_0\) and the derivative of \(4\pi^2/(Nt)\) is \(-1\), gives
\[
 \left.\frac{d}{dt}\log P_h(t)\right|_{t=t_0}
 =\frac1{2t_0}-\frac{\pi^2}{6ht_0^2}-\frac h{24}
 -\left.\frac{d}{dt}\log P_{N/h}(t)\right|_{t=t_0}.
\]
Adding \(d(\log P_{N/h})/dt\) to both sides and using \(t_0=2\pi/\sqrt N\) gives \eqref{eq:paired-derivative}.  Summing over \(h\in\calH\) gives \eqref{eq:paired-derivative-sum}.  Rewriting \eqref{eq:paired-derivative-sum} as in the proof of Theorem~\ref{thm:self-dual-pi} gives \eqref{eq:Heegner-pi}.
\end{proof}

This proposition makes explicit that the fixed-point logarithmic derivative is accelerated by pairing \(h\) with \(N/h\), not by summing the slow coefficient expansion.  The coefficient identity is therefore conceptual, while the paired-product identity is the computationally effective fixed-point representation.

\begin{remark}\label{rem:higher-moments}
Repeated differentiation of \eqref{eq:Zb-functional} at \(t=t_M\) gives a hierarchy of fixed-point moment identities.  If
\[
 \mathbb E_b(F(n))=
 \frac{\sum_{n\ge0}F(n)G_b(n)e^{-nt_M}}
 {\sum_{n\ge0}G_b(n)e^{-nt_M}},
\]
then Theorem \ref{thm:self-dual-pi} is the first logarithmic-derivative identity.  Higher odd derivatives give relations involving centered moments of \(n\) and odd powers of \(1/\pi\).  We do not use these identities here.
\end{remark}

\section{Final remarks}\label{sec:final-remarks}

The organizing principle of the paper is the squarefree Boolean-Walsh mechanism: the finite Fourier transform on the divisor cube diagonalizes the cusp-order, Fricke, Atkin--Lehner, and Eisenstein constant-term structures considered above.  The Heegner-coloured partition product is a distinguished boundary specialization.  Natural continuations include exact eta-multiplier orders, Smith normal forms of the formal Walsh cusp-divisor quotient, explicit Shimura reciprocity computations, and Rademacher-type expansions for the reciprocal partition coefficients.

\appendix

\section{Elementary consequences for the Heegner product}\label{app:elementary}

This appendix records consequences that are useful for computation but are not part of the main Boolean-Walsh structure.  None of the results in this appendix is used in the proof of the main Boolean-Walsh theorems.

\subsection{Coefficient recurrences}

Let
\[
 X_{\calH}(q)=\sum_{n\ge0}a(n)q^n,
 \qquad
 Y_{\calH}(q)=\sum_{n\ge0}A(n)q^n.
\]
Define
\begin{equation}\label{eq:B-n}
 B(n)=\sigma_1(n)+\sum_{\substack{p\in\calP\\ p\mid n}}p\,\sigma_1(n/p).
\end{equation}
Then
\begin{align}
 D\log X_{\calH}(q)&=-\sum_{n\ge1}B(n)q^n,\label{eq:X-log-derivative}\\
 D\log Y_{\calH}(q)&=\sum_{n\ge1}B(n)q^n.\label{eq:Y-log-derivative}
\end{align}
Consequently, for \(n\ge1\),
\begin{align}
 n a(n)&=-\sum_{m=1}^{n}B(m)a(n-m),\label{eq:a-recurrence}\\
 n A(n)&=\sum_{m=1}^{n}B(m)A(n-m).\label{eq:A-recurrence}
\end{align}
Also,
\begin{equation}\label{eq:convolution-inverse}
 \sum_{j=0}^{n}a(j)A(n-j)=\delta_{n,0}.
\end{equation}
These formulae follow immediately by logarithmic differentiation of the Euler products.

\subsection{Congruence shadow of the local Up-action}

For \(p\in\calP\), the divisor sum \eqref{eq:B-n} satisfies
\begin{equation}\label{eq:B-Up-congruence}
 B(pn)\equiv B(n)\pmod p,
\end{equation}
so
\[
 U_p\left(\sum_{n\ge1}B(n)q^n\right)
 \equiv
 \sum_{n\ge1}B(n)q^n\pmod p.
\]
Indeed, this follows by comparing the Euler factors of \eqref{eq:B-n} at \(p\).  It is the elementary coefficient-level shadow of the bad-prime \(U_p\)-eigenvalue phenomenon in Theorem~\ref{thm:Up-local}.  Generic Frobenius congruences for Euler products are not used in the structural theory.

\subsection{A coefficient asymptotic}

Let
\[
 Y_{\calH}(q)=\sum_{n\ge0}A(n)q^n.
\]
From the real Fricke transformation of the product over \(\calH\), as \(t\to0^+\),
\begin{equation}\label{eq:ZH-asymptotic}
 Y_{\calH}(e^{-t})
 \sim
 \sqrt N\left(\frac{t}{2\pi}\right)^{9/2}
 \exp\left(\frac{\pi^2S^*}{6Nt}-\frac{St}{24}\right).
\end{equation}
Let
\[
 A_0=\frac{\pi^2S^*}{6N}=\frac{\pi^2}{6}\sum_{h\in\calH}\frac1h.
\]
Ingham's Tauberian theorem yields
\begin{equation}\label{eq:A-asymptotic}
 A(n)\sim
 \frac{\sqrt N}{2\sqrt\pi(2\pi)^{9/2}}
 A_0^{5/2}n^{-3}
 \exp(2\sqrt{A_0n}).
\end{equation}
The exponential factor may also be written as
\[
 \exp\left(
 2\pi\sqrt{\frac{n}{6}\sum_{h\in\calH}\frac1h}
 \right).
\]

\subsection{Non-D-finiteness}

Finally, \(Y_{\calH}(q)\) is not D-finite.  Indeed, since \(1\in\calH\), every root of unity is a boundary singularity of \(Y_{\calH}\).  The roots of unity are dense on \(|q|=1\), whereas a D-finite power series analytic at the origin has only finitely many singularities in the finite complex plane; see Stanley \cite{StanleyDfinite}.

\end{document}